\newcommand{\e}{{\rm e}}
\newcommand{\E}{{\mathbb E}}
\newcommand{\Q}{{\mathbb Q}}
\newcommand{\C}{{\mathbb C}}
\newcommand{\R}{{\mathbb R}}
\newcommand{\Mid}{{\ \Big|\ }}
\newcommand{\Fc}{{\mathcal F}}
\newcommand{\id}{{\rm id}}
\DeclareMathOperator{\tr}{tr}
\DeclareMathOperator{\Tr}{Tr}
\newtheorem{theorem}{Theorem}
\newtheorem{corollary}[theorem]{Corollary}
\newtheorem{definition}[theorem]{Definition}
\newtheorem{lemma}[theorem]{Lemma}
\newtheorem{remark}[theorem]{Remark}
\theoremstyle{definition}
\newtheorem{example}[theorem]{Example}
\numberwithin{equation}{section}
\numberwithin{theorem}{section}
\definecolor{darkgreen}{rgb}{0,0.7,0}
\newcommand{\iii}{{\vert\kern-0.25ex\vert\kern-0.25ex\vert}}
\newcommand{\EE}{\mathbb{E}}
\begin{document}
	
	\title{The characteristic function of Gaussian stochastic volatility models: an analytic expression}
	
	\author{Eduardo Abi Jaber\thanks{Universit\'e Paris 1 Panth\'eon-Sorbonne, Centre d'Economie de la Sorbonne, 106, Boulevard de l'H\^opital, 75013 Paris, eduardo.abi-jaber@univ-paris1.fr. I would like to thank Shaun Li for interesting discussions.}}

	\maketitle
	
	\begin{abstract}
	Stochastic volatility models based on Gaussian  processes, like fractional Brownian motion, are able to reproduce important stylized facts of financial markets such as  rich autocorrelation structures, persistence and roughness of sample paths. This is made  possible by virtue of the flexibility introduced in the choice of the covariance function of the Gaussian process. The price to pay is that, in general, such models are no longer Markovian nor semimartingales, which limits their practical use. We derive, in two different ways, an explicit analytic expression for the joint characteristic function of the log-price and its integrated variance in general Gaussian stochastic volatility models. Such analytic expression can be approximated by closed form matrix expressions. This opens the door to fast approximation of the joint density and  pricing of derivatives on both the stock and its realized variance using Fourier inversion techniques. In the context of rough volatility modeling, our results apply to the (rough) fractional Stein--Stein model and provide the first analytic formulae for option pricing known to date, generalizing that of Stein--Stein, Sch\"obel--Zhu and a special case of Heston.  
 	\\[2ex] 
		\noindent{\textbf {Keywords:}} Gaussian processes, Volterra processes, non-Markovian Stein--Stein/Sch\"obel-Zhu models, rough volatility.
	\end{abstract}


\section{Introduction}

In the realm of risk management in mathematical  finance, academics and practitioners have been always striving for explicit solutions to option prices and hedging strategies in their models.   Undoubtedly,  finding  explicit expressions to a theoretical problem can be highly satisfying in itself; it also has many practical advantages such as: reducing computational time (compared to brute force Monte-Carlo simulations for instance); achieving a higher precision for option prices and hedging strategies;  providing a better understanding of the role of the parameters of the model and the sensitivities of the prices and strategies with respect to them. As one would expect, explicit expressions usually  come at the expense of sacrificing  the flexibility and the accuracy of the model. In a nutshell, the aim of the present paper is to show that  analytic expressions for option prices can be found in a highly flexible class of non-Markovian stochastic volatility models.

\subsection*{From Black-Scholes to rough volatility}
In their seminal paper, \citet{black1973pricing} derived closed form solutions  for the prices of European call and put options in the geometric Brownian motion model where the dynamics of the stock price $S$ are given by: 
\begin{align}\label{eq:introBS}
dS_t = S_t \sigma dB_t, \quad S_0>0,
\end{align}
with $B$  a standard Brownian motion and $\sigma$ the constant instantaneous volatility parameter.   Although revolutionary,  the model remains very simple: it drifts away from the reality of financial markets characterized by non-Gaussian returns, fat tails of stock prices and their volatilities, asymmetric option prices (i.e.~the implied volatility smile and skew)\ldots see \citet{cont2001empirical}.
 Since then a large and growing literature has been developed to refine the \citet{black1973pricing} model. One notable direction is stochastic volatility modeling where   the constant volatility $\sigma$  in \eqref{eq:introBS} is replaced by a Markovian stochastic process $(\sigma_t)_{t\geq 0}$. In their celebrated paper,  \citet{stein1991stock} modeled $(\sigma_t)_{t\geq 0}$ by a mean-reverting Brownian motion of the form 
\begin{align}\label{intro:ssvol}
d\sigma_t = \kappa(\theta - \sigma_t)dt + \nu dW_t, 
\end{align}
where $W$ is a standard Brownian motion independent of $B$. Remarkably, they obtained closed-form expressions for the characteristic function of the log-price, which allowed them to recover the density as well as option prices by Fourier inversion of the characteristic function. Later on the model has been extended by \citet{schobel1999stochastic} to account for the leverage effect, i.e.~an arbitrary correlation between $W$ and $B$. Similar formulas for the characteristic function of the log--price to those of Stein--Stein   are derived for the non-zero correlation case. 

Prior to the extension by \citet{schobel1999stochastic},  \citet{heston1993closed} took a slightly different approach to include the leverage effect by introducing a model  deeply rooted in the Stein--Stein model. Heston  observed that the instantenous variance process $V_t=\sigma^2_t$ in the Stein--Stein model with $\theta=0$ follows a CIR process 
thanks to It\^o's formula,\footnote{Squares of Brownian motion constitute the building blocks of squared Bessel processes, see \citet[Chapter XI]{RY:99}.}  so that the Stein--Stein model  can be recast in the following form 
\begin{align}
dS_t &= S_t \sqrt{V_t}dB_t,\\
dV_t &= (\nu^2-2\kappa V_t)dt + 2 \nu \sqrt{V_t} dW_t, \label{intro:heston}
\end{align}
 where $B= \rho W + \sqrt{1 - \rho^2} W^\perp$ with $\rho \in [-1,1]$ and $W^{\perp}$ a Brownian motion independent of $W$. Such model remains tractable as it was shown earlier in the context of bond pricing with uncertain  inflation by \citet[Equations (51)-(52)]{cox1985intertemporal}.\footnote{The long-term level of the variance $\nu^2$ in \eqref{intro:heston} can be replaced by a more general coefficient $\theta\geq 0$.} \citet{heston1993closed}  carried on by deriving closed form expressions for the characteristic function of the log--price, which made his model one of the most, if not the most, popular model  among practitioners. As one would expect, the expressions of \citet{heston1993closed} and \citet{schobel1999stochastic} share a lot of similarities and they perfectly agree when $\theta=0$ in \eqref{intro:ssvol},  see \citet[equation (44)]{lord2006rotation}. Such analytical tractability motivated the development of the theory of finite-dimensional Markovian affine processes, see \cite{DFS:03}.

Unfortunately, Markovian stochastic volatility models, such as the Heston and the Stein--Stein models, are not flexible enough: they generate an
auto-correlation structure which is too simplistic compared to empirical observations. Indeed,  several empirical studies have documented the persistence in the volatility time series, see \citet{andersen1997intraday,ding1993long}. More recently, \citet{gatheral2018volatility} and \citet{bennedsen2016decoupling} show that  the sample paths of the realized volatility are rougher than standard Brownian motion at any realistic time scale as illustrated on Figure~\ref{fig:skew intro}-(a). From a pricing perspective, continuous semi-martingale models driven by a standard Brownian motion fail to reproduce the power-law decay of  the at-the-money skew of option prices as shown on Figure~\ref{fig:skew intro}-(b), see \citet{carr2003finite, fouque2003multiscale, lee2005implied, alos2007short, bayer2016pricing, fukasawa2011asymptotic, fukasawa2021volatility}.

\begin{center}
	\includegraphics[scale=0.6]{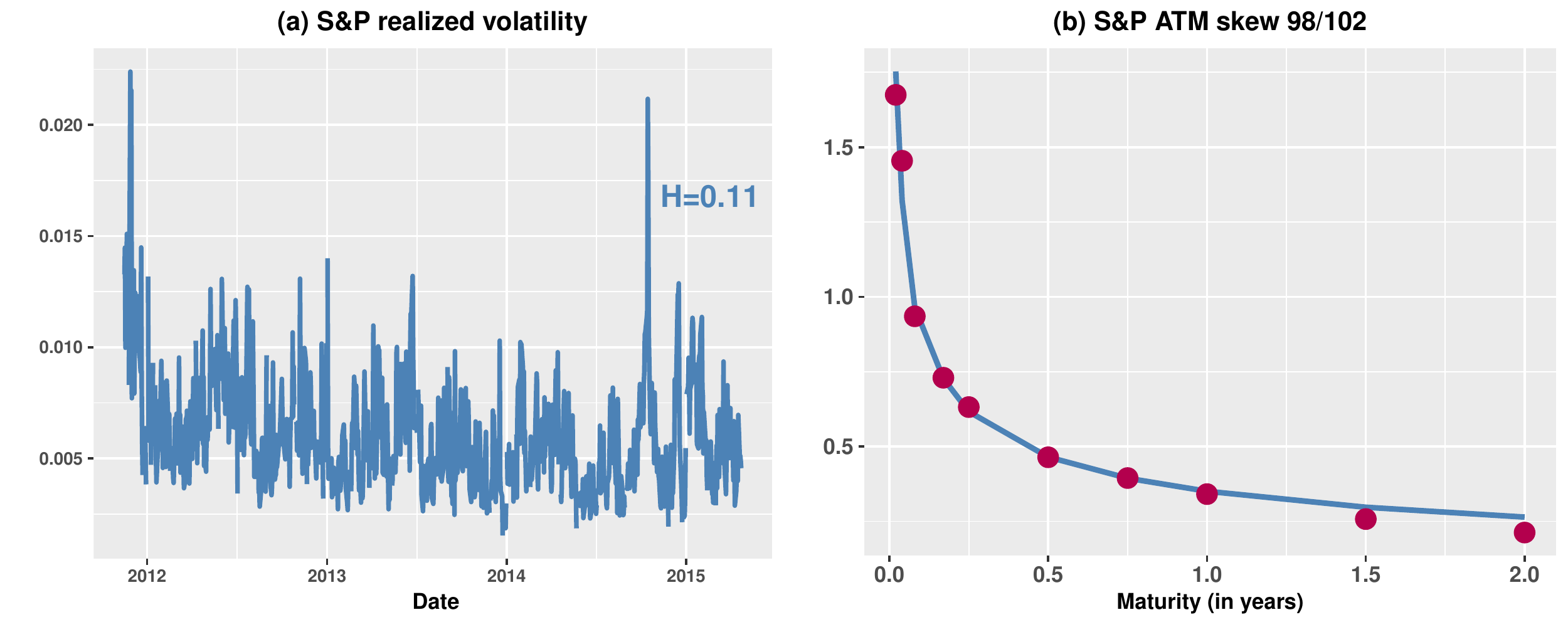}
	\captionof{figure}{(a) Realized volatility of the S\&P  downloaded from \url{https://realized.oxford-man.ox.ac.uk/} with an estimated Hurst index of $\hat H=0.11$. (b) Term structure of the at-the-money skew of the implied volatility $\frac{\partial \sigma_{\text{implicit}}(k,T)}{\partial k}\big |_{k=0} $ for the S\&P index on June 20, 2018 (red dots) and a power-law fit $t \to 0.35\times t^{-0.41}$. Here $k:=\ln(K/S_0)$ stands for the log-moneyness and $T$ for the time to maturity.}
	\label{fig:skew intro}
\end{center}

These studies have motivated the need  to enhance conventional stochastic volatility models  with richer auto-correlation structures. This has been  initiated in \citet{comte1998long} by replacing the driving  Brownian motion of the volatility process by a fractional Brownian motion  $W^H$:
$$\resizebox{0.97\textwidth}{!}{$ W_t^H = \frac{1}{\Gamma(H+1/2)}\int_0^t (t-s)^{H-1/2}dW_s + \frac{1}{\Gamma(H+1/2)}\int_{-\infty}^0 ((t-s)^{H-1/2}-(-s)^{H-1/2}) dW_s$}$$
where $H\in (0,1)$ is the Hurst exponent: $H>1/2$ corresponds to positively correlated returns, $H<1/2$ to negatively correlated increments and  $H=1/2$ reduces to the case of  standard Brownian motion. Sample paths of $W^H$ are locally H\"older continuous of any order strictly less than $H$, thereby less regular than standard Brownian motion. Initially \citet{comte1998long} considered the case $H>1/2$. However, a smaller Hurst index $H\approx 0.1$ allows to match exactly the regularity of the volatility time series and the exponent in the power--law decay of the at-the-money skew measured on the market (Figure~\ref{fig:skew intro}). Consequently models involving the fractional kernel $t\mapsto t^{H-1/2}$ with $H<1/2$ have been dubbed  ``rough volatility models" by \citet{gatheral2018volatility}.

The price to pay is that, in general, such models are no longer Markovian nor semimartingales, which limits their practical use and make their mathematical analysis quite challenging.  This  has initiated a thriving branch of research.\footnote{Refer to \url{https://sites.google.com/site/roughvol/home} for references.}
The need for fast pricing in such non-Markovian models is therefore, more than ever, crucial.  One breakthrough in that direction was achieved by  \citet{EER:06} who came up with a rough version of the \citet{heston1993closed} model  after convolving the dynamics \eqref{intro:heston} with a fractional kernel to get 
\begin{align}
V_t  &=  V_0 + \frac{1}{\Gamma(H+1/2)}\int_0^{t}  (t-s)^{H-1/2}\left((\theta -\kappa V_s) ds+ \nu \sqrt{V_s} dW_s\right), \label{eq:intro rheston2}
\end{align}
for $H \in (0,1/2)$.  Remarkably, they show that  an analogous formula for the characteristic function of the log price  to that of \citet{heston1993closed}    continue to hold modulo a fractional deterministic Riccati equation. From a theoretical perspective, the rough Heston model falls  into the broader class of non-Markovian affine   Volterra processes developed in \citet{AJLP17,abijaber2019weak}, and can be recovered as a projection of infinite dimensional Markovian affine processes as illustrated in \citet{abi2019markovian,cuchiero2018generalized,gatheral2019affine}.

Although  the rough Heston model can be efficiently implemented \citep{abi2019lifting,abi2019multifactor, callegaro2020fast, gatheral2019rational},  no closed-form solution for the fractional deterministic Riccati equation and whence for the characteristic function is known to date, which has to be contrasted with the conventional \citet{heston1993closed} model. One possible explanation could be that, unlike the Markovian case, squares of fractional Brownian motion have different dynamics than \eqref{eq:intro rheston2}, so that the marginals of the process  \eqref{eq:intro rheston2} are not chi-square distributed, except for the case $H=1/2$. 

The main objective of the paper is to rely on squares of general Gaussian processes with arbitrary covariance structures by considering the non-Markovian extension of the \citet{stein1991stock} and the \citet{schobel1999stochastic} models. We will show that the underlying Gaussianity makes the problem highly tractable and allows to recover analytic expressions for the joint Fourier--Laplace transform of the log price and the integrated variance in general, which would agree with that of Stein--Stein, Schöbel--Zhu and Heston under the Markovian setting.   Such models have been already considered several times in the context of non-Markovian and rough volatility literature   \citep{cuchiero2019markovian,gulisashvili2019extreme, HS18, horvath2017asymptotic} but there has been no derivation  of the analytic form of the characteristic function.   Our methodology takes a step further the recent derivation in  \citet{AJ2019laplace} for the Laplace transform of the integrated variance and that of \citet{abi2020markowitz} where the Laplace transform of the forward co-variance curve enters in the context of portfolio optimization.

\subsection*{The Gaussian Stein--Stein model and  main results}
For $T>0$, we  will consider the following generalized version of the Stein--Stein model:
\begin{align}
d S_t &= S_t X_t  dB_t, \quad S_0>0, \label{eq:steinsteinS}\\
X_t &= g_0(t) +\int_0^T K(t,s)\kappa X_s ds  + \int_0^T K(t,s) \nu dW_s,  \label{eq:steinsteinX} 
\end{align}
with $B= \rho W + \sqrt{1-\rho^2} W^{\perp}$,  $\rho \in [-1,1]$, $\kappa,\nu \in \R$, $g_0$ a suitable deterministic  input curve, $K:[0,T]^2\to \R$ a measurable kernel and $(W,W^{\perp})$ a two-dimensional Brownian motion.

Under mild assumptions on its covariance function, every Gaussian process   can be written in the form \eqref{eq:steinsteinX} with $\kappa=0$, see \citet{sottinen2016stochastic}. Such representation is known as the Fredholm representation.  We will be chiefly interested in two classes of kernels $K$:
\begin{itemize}
	\item 
Symmetric kernels, i.e. $K(t,s)=K(s,t)$ for all $s,t\leq T$, for which the integration in \eqref{eq:steinsteinX} goes up to time $T$, meaning that $X$ is not necessarily adapted to the filtration generated by $W$. {In this case, the stochastic integral $\int_0^{\cdot} X dB$ cannot be defined in a dynamical way as an It\^o integral whenever $\rho \neq 0$. We will make sense of \eqref{eq:steinsteinS}--\eqref{eq:steinsteinX} in a static sense in Section \ref{S:symm}.}
\item
Volterra kernels, i.e. $K(t,s)=0$ whenever $s\geq t$, for which integration in  \eqref{eq:steinsteinX} goes up to time $t$, which is more in line with standard stochastic volatility modeling {and for which the stochastic integral $\int_0^{\cdot} XdB$ can be defined in the  usual It\^o sense, see Section \ref{S:volterra}}. For instance, the conventional mean reverting Stein-Stein model \eqref{intro:ssvol} can be recovered by setting $g_0(t)=X_0 - \kappa \theta t$, $\kappa \leq 0$ and by considering the Volterra kernel   $K(t,s)=\bm {1}_{s< t}$.  The fractional Brownian motion with a Hurst index $H \in (0,1)$ can be represented using the Volterra kernel 	\begin{align*}
K(t,s)= \bm {1}_{s< t}\frac{(t-s)^{H-1/2}}{\Gamma(H+\frac 1 2)} \, {}_2 F_1\left(H-\frac 1 2; \frac 1 2-H; H+\frac 1 2; 1-\frac t s \right),
\end{align*}
where ${}_2F_1$ is the Gauss hypergeometric function; and the Riemman-Liouville fractional Brownian motion corresponds to the case $K(t,s)=\bm {1}_{s< t}(t-s)^{H-1/2}/\Gamma(H+1/2)$.
\end{itemize}

For suitable $u,w\in \mathbb C$, we provide the following analytical expression for the conditional joint Fourier--Laplace transform of the log-price  and the integrated variance:
\begin{align}\label{eq:introchar}
\!\!\!\!\!\!\!\!	\E\left[ \exp\left(u \log \frac{S_T}{S_t} + w \int_t^T X_s^2 ds  \right) \! \!\Mid \mathcal \! \mathcal F_t\right] =  \frac{ \exp\left(\langle g_t, \boldsymbol{\Psi}_{t} g_t \rangle_{L^2} \right)}{\det\left( \boldsymbol{\Phi}_t \right) ^{1/2}},
\end{align}
with   $\langle f,h\rangle_{L^2}=\int_0^Tf(s)h(s)ds$, $\det$ the \citet{fredholm1903} determinant (see Appendix~\ref{A:trace}),  $g_t$ the adjusted conditional mean given by
\begin{align}\label{eq:condmean}
g_t(s)= \bm 1_{t\leq s} \E\left[X_s -\int_t^T K(s,r)\kappa X_r dr \Mid \Fc_t \right], \quad s,t\leq T;
\end{align}
and $\boldsymbol{\Psi}_{t}$ a linear operator  
 acting on $L^2\left([0,T],\R\right)$ defined by
	\begin{align}\label{def:riccati_operator}
\boldsymbol \Psi_{t}= \left( \id - b \boldsymbol{K}^* \right)^{-1}a \left( \id - 2 a \boldsymbol{\tilde \Sigma}_{t}  \right)^{-1}     \left( \id - b  \boldsymbol{K} \right)^{-1},  \quad  t\leq T, 
\end{align}
{where $\boldsymbol K$ denotes the integral operator induced by $K$, $\boldsymbol{K}^*$ the adjoint operator,\footnote{cf. below for detailed notations.}}   $\id$ denotes the identity operator, i.e. $(\id f)=f$ for all $f \in L^2\left([0,T],\C \right)$, 
\begin{align}\label{eq:ab}
 a = w + \frac 1 2 (u^2-u), \quad b=\kappa +\rho\nu u,
\end{align}
 and $\boldsymbol{\tilde{\Sigma}}_t$ the adjusted covariance integral operator defined by
\begin{align}\label{def:C_tilde}
	\boldsymbol{\tilde{\Sigma}}_t = (\id- b\boldsymbol{K})^{-1} \boldsymbol{\Sigma}_t (\id-b \boldsymbol{K}^*)^{-1},
\end{align}
with $\boldsymbol{\Sigma}_t$ defined as the integral operator associated with the covariance kernel
\begin{align}\label{eq:sigmakernel}
	{\Sigma}_t(s,u) =\nu^2  \int_t^{T} K(s,z)K(u,z)  dz, \quad t \leq s,u \leq T,
\end{align}
and finally $\boldsymbol{\Phi}$ is defined by  
  \begin{equation}
\boldsymbol{\Phi}_t =
\begin{cases*}
(\id - b\boldsymbol{K})(\id - 2 a\boldsymbol{\tilde \Sigma}_{t})(\id-b\boldsymbol{K}) & if $K$ is a symmetric kernel \\
\id - 2 a\boldsymbol{\tilde \Sigma}_{t}     & if $K$ is a Volterra kernel
\end{cases*}.
\end{equation}

At first glance, the expressions for $\boldsymbol{\Phi}$ seem to depend on the class of the kernel, but they actually agree. Indeed, for Volterra kernels, i.e. $K(t,s)=0$ for $s\geq t$, $\det(\id -b\boldsymbol{K})=\det(\id -b\boldsymbol{K}^*)=1$ so that  using the relation \citep[Theorem 3.8]{simon1977notes} $\det((\id +\boldsymbol{F})(\id+\boldsymbol{G}))=\det(\id +\boldsymbol{F})\det(\id +\boldsymbol{G})$:
$$  \det((\id - b\boldsymbol{K})(\id - 2 a \boldsymbol{\tilde \Sigma}_{t})(\id-b\boldsymbol{K}^*))=\det(\id - 2 a\boldsymbol{\tilde \Sigma}_{t}).$$

As already mentioned, we prove \eqref{eq:introchar} for two classes of kernels:
\begin{itemize}
	\item 
\textbf{Symmetric nonnegative kernels}: we  provide an elementary static derivation  of   \eqref{eq:introchar} for $t=0$ and $\kappa=0$, based on the spectral decomposition of $K$ which  leads to the  decomposition of the characteristic function as an infinite product
	of independent Wishart distributions. The operator $\boldsymbol{\Psi}_0$  appears naturally after a rearrangement of the terms.  The main result is collected in  Theorem~\ref{T:symm}.
	\item
	\textbf{Volterra kernels:} {under some $L^2$-continuity and boundedness condition,}   we adopt a dynamical approach to derive the conditional characteristic function \eqref{eq:introchar} via It\^o's formula on the adjusted conditional mean process $(g_t(s))_{t\leq s}$. The main result is stated in Theorem~\ref{T:volterra}. This is the class of kernels which is more suited for financial applications. 
\end{itemize}

From the numerical perspective, we will show in Section~\ref{S:numeric} that the expression \eqref{eq:introchar}  lends itself to approximation by closed form solutions   using finite dimensional matrices after a straightforward discretization of the operators in the form 
\begin{align}
\!\!\!\!\!\!\!\!	\E\left[ \exp\left(u \log \frac {S_T} {S_0} + w \int_0^T X_s^2 ds  \right) \right] \approx \frac{\exp\left( \frac{T}{n} g_n^\top {\Psi}_0^n g_n  \right)}{\det(\Phi^n_0)^{1/2}}
\end{align}
where $g_n\in \R^{n}$ and $\Phi^n_0,  {\Psi}_0^n \in \R^{n\times n}$ are entirely determined by $(g_0,K,\nu,\kappa,u,w)$ and $\det$ is the standard determinant of a matrix, we refer to Section~\ref{S:numeric}. We illustrate the applicability of these formulas on an option pricing and calibration example by Fourier inversion techniques in a (rough) fractional Stein--Stein model in Section~\ref{S:pricing}.

\subsection*{Notations}
 Fix $T>0$. We let $\mathbb K$ denote $\R$ or $\mathbb C$. We denote by $\langle \cdot, \cdot \rangle_{L^2}$ the following product 
\begin{align}
\langle f, g\rangle_{L^2} = \int_0^T f(s)^{\top} g(s) ds, \quad f,g\in L^2\left([0,T],\mathbb K\right). 
\end{align}
We note that $\langle \cdot, \cdot \rangle_{L^2}$ is an inner product on $L^2\left([0,T],\mathbb R\right)$, but not on $L^2\left([0,T],\mathbb C\right)$.
We define $L^2\left([0,T]^2,\mathbb K\right)$ to be the space of  measurable kernels $K:[0,T]^2 \to \mathbb K$ such that 
\begin{align*}
\int_0^T \int_0^T |K(t,s)|^2 dt ds < \infty.
\end{align*}
For any $K,L \in  L^2\left([0,T]^2,\mathbb K\right)$ we define the $\star$-product   by
\begin{align}\label{eq:starproduct}
(K \star L)(s,u) = \int_0^T K(s,z) L(z,u)dz, \quad  (s,u) \in [0,T]^2,
\end{align}
which is well-defined in $L^2\left([0,T]^2,\mathbb K\right)$ due to the Cauchy-Schwarz inequality.  For any  kernel $K \in L^2\left([0,T]^2,\mathbb K\right)$, we denote by {$\boldsymbol K$} the integral operator   induced by the kernel $K$ that is 
\begin{align}
({\boldsymbol K} g)(s)=\int_0^T K(s,u) g(u)du,\quad g \in L^2\left([0,T],\mathbb K\right).
\end{align}
$\boldsymbol K$ is a linear bounded operator from  $L^2\left([0,T],\mathbb K\right)$ into itself. 
If $\boldsymbol{K}$ and $\boldsymbol{L}$ are two integral operators induced by the kernels $K$ and $L$  in $L^2\left([0,T]^2,\mathbb K\right)$, then $\boldsymbol{K}\boldsymbol{L}$ is {also an} integral operator induced by the kernel $K\star L$.

When $\mathbb K=\R$, we denote by $K^*$ the adjoint kernel of $K$ for $\langle \cdot, \cdot \rangle_{L^2}$, that is 
\begin{align}
K^*(s,u) &= \; K(u,s), \quad  (s,u) \in [0,T]^2,
\end{align}
and by $\boldsymbol{K}^*$ the corresponding adjoint integral operator. 

{The square-root of a complex number $\sqrt{z}$ is defined through its main branch,~i.e.~$\sqrt{z}=|z|e^{i\mbox{arg}(z)/2}$  with $z=|z|e^{i\mbox{arg}(z)}$ such that $\arg(z) \in (-\pi, \pi]$.}
\vspace{1mm}

\section{Symmetric kernels: {an elementary static approach}}\label{S:symm}
We provide an elementary static derivation of the joint Fourier--Laplace transform in the special case of symmetric kernels with $\kappa=0$. We stress that, although the case of symmetric kernels is not of interest for practical applications, it naturally leads through direct computations to the analytic expression \eqref{eq:introchar} in terms of the operator $\boldsymbol{\Psi}$ given in \eqref{def:riccati_operator}. Later on, in Section~\ref{S:volterra}, such expressions are shown to hold in the more practical case of Volterra kernels using a dynamical approach.

{\begin{definition}\label{D:nonnegative}
	A linear operator  $\boldsymbol K$  from  $L^2\left([0,T],\mathbb R\right)$ into itself is symmetric nonnegative if $\boldsymbol K=\boldsymbol K^*$ and   $\langle f,\boldsymbol{K}f\rangle_{L^2}\geq 0$, for all $f  \in L^2\left([0,T],\R\right)$.   Whenever	$\boldsymbol K$ is an integral operator induced by some kernel  $K \in L^2\left([0,T]^2,\R\right)$, we will say that $K$ is symmetric nonnegative. In this case,  it follows that  $K=K^*$ a.e. and  
	$$   \int_0^T \int_0^T f(s)^\top K(s,u) f(u) du ds \geq 0, \quad  \forall f  \in L^2\left([0,T],\R\right).  $$
	 $\boldsymbol{K}$ is said to be symmetric nonpositive, if $(-\boldsymbol{K})$ is symmetric nonnegative. 
\end{definition}}

Throughout this section, we fix $T>0$  and we consider the case of symmetric kernels having the following spectral decomposition 
\begin{align}\label{eq:decompK}
K(t,s)=\sum_{n \geq 1} \sqrt{\lambda_n} e_n(t)e_n(s), \quad t,s\leq T,
\end{align}
where $(e_n)_{n\geq 1}$ is an orthonormal basis of $L^2([0,T],\R)$ for  the inner product $\langle f,g \rangle_{L^2}=\int_0^T f(s)g(s)ds$ and  $\lambda_1 \geq \lambda_2 \geq \ldots \geq 0$ with $\lambda_n\to 0$, as $n\to \infty$, such that 
\begin{align}\label{eq:sumlamb}
\sum_{n \geq 1} \lambda_n <\infty. 
\end{align}
Such decomposition is possible whenever the operator $\boldsymbol{K}$ is the (nonnegative symmetric) square-root of a covariance operator $\boldsymbol{C}$ which is generated by a continuous kernel.  This is known as Mercer's theorem, see \citet[Theorem 1, p.208]{shorack2009empirical} and leads to the so-called Kac--Siegert/Karhunen--Lo\`eve representation of the process $X$, see \citet{kac1947theory, karhunen1946spektraltheorie, loeve1955probability}. In this case, one can show that any square-integrable Gaussian process $X$ with mean  $g_0$ and covariance $\boldsymbol{C}$ admits the representation \eqref{eq:steinsteinX} with $\kappa=0$  on some filtered probability space supporting a Brownian motion $W$, see \citet[Theorem 12]{sottinen2016stochastic}.
	
{We start by making precise how one should understand \eqref{eq:steinsteinS}--\eqref{eq:steinsteinX} in the case of symmetric kernels and $\kappa = 0$. We rewrite \eqref{eq:steinsteinS} in the equivalent form 
	\begin{align}\label{eq:logSsym}
	\log S_t = \log S_0 - \frac 1 2 \int_0^t X_s^2 ds + \rho  \int_0^t X_s dW_s + \sqrt{1-\rho^2} \int_0^t X_s dW_s^{\perp}.
	\end{align}  
	We fix $T>0$, $g_0 \in L^2([0,T], \R)$  and a complete probability space $(\Omega, \mathcal F, \mathbb Q)$ supporting a two dimensional Brownian motion $(W,W^{\perp})$ and, for each $t \leq T$, we set 
\begin{align}\label{eq:Xsym}
X_t = g_0(t) + \int_0^T K(t,s) \nu dW_s.
\end{align}
We note that \eqref{eq:decompK}--\eqref{eq:sumlamb} imply that $K \in L^2([0,T]^2, \R)$ so that the stochastic integral $\int_0^T K(t,s)\nu dW_s$ is well-defined as an Itô integral for almost every $t\leq T$  and $X$ has sample paths in $L^2([0,T],\R)$ almost surely.  Setting $\mathcal F_t =\mathcal F^X_t \vee \mathcal F^{W^{\perp}}_t$ where $(\mathcal F^Y_t)_{t \geq 0}$ stands for the filtration generated by the process $Y$, we have that $W^{\perp}$ is still a Brownian motion with respect to $(\mathcal F_t)_{t\geq 0}$ {and, up to a modification, $X$ is progressively measurable\footnote{Every jointly measurable and adapted process admits a progressively measurable modification, see \cite{ondrejat2013existence}.} with respect to $(\mathcal F^X_t)_{t\geq 0}$ (and whence w.r.t.~the enlarged filtration $(\mathcal F_t)_{t\geq 0}$)} so that  
$$ \int_0^{\cdot} X_s dW^{\perp}_s $$ is well defined as an Itô integral with respect to $(\mathcal F_t)_{t\geq 0}$. If $\rho = 0$, \eqref{eq:logSsym} is therefore well-defined in the classical way. 
 However, for $\rho \neq 0$,   since $X$ is not necessarily adapted to the filtration generated by $W$ (and vice versa), $W$ is no longer necessarily a Brownian motion with respect to the extended filtration $\mathcal F^X \vee \mathcal F^W$, and one cannot make sense of the  stochastic integral $\int_0^{\cdot} X dW$  in the usual dynamical sense. We provide a static interpretation of \eqref{eq:logSsym} valid only at the terminal time $T$.  To this end, since $g_0 \in L^2([0,T],\R)$, we can write $g_0=\sum_{n\geq 1} \langle g_0,e_n  \rangle e_n $. Making use of \eqref{eq:decompK}, we first observe  that, an application of Fubini's theorem \cite[Theorem 2.2]{veraar2012stochastic}, justified by the fact that 
 $$\int_0^T\sum_{n\geq 1}\E\left[\int_0^T |\sqrt{\lambda_n} e_n(t) e_n(s)|^2 ds  \right] dt = \sum_{n\geq 1} \lambda_n \int_0^T e_n(t)^2 dt\leq \sum_{n\geq 1} \lambda_n<\infty, $$
 yields that
\begin{align}\label{eq:repX}
X_t = g_0(t) + \int_0^T K(t,s) \nu dW_s =\sum_{n\geq 1} \left( \langle g_0,e_n  \rangle + \sqrt{\lambda_n} \nu \xi_n   \right)e_n(t), \quad dt \otimes \mathbb Q-a.e.
\end{align} 
where $\xi_n=\int_0^T e_n(s)dW_s$, for each $n\geq 1$. Since $(e_n)_{n\geq 1}$ is an orthonormal family in $L^2$, $(\xi_n)_{n\geq 1}$ is a sequence of independent standard Gaussian random variables that are $\mathcal F^W_T$ measurable. 
We set
\begin{align}\label{eq:repX3}
N_T = \sum_{n\geq 1} \left( \langle g_0,e_n  \rangle + \sqrt{\lambda_n} \nu \xi_n   \right) \xi_n.
\end{align}

\begin{remark}
We note that $N_T$ plays the role of $\int_0^T X_s dW_s$, since a formal interchange leads to 
\begin{align}
N_T & = \sum_{n\geq 1} \left( \langle g_0,e_n  \rangle + \sqrt{\lambda_n} \nu \xi_n   \right) \int_0^T e_n(s)dW_s \\
&(=)\int_0^T \sum_{n\geq 1} \left( \langle g_0,e_n  \rangle + \sqrt{\lambda_n} \nu \xi_n   \right)  e_n(s)dW_s\\
&(=) \int_0^T X_s dW_s.   
\end{align}
Obviously, since $\xi_n$ are not adapted the integral $\int_0^{\cdot} \xi_n e_n(s)dW_s$ cannot be defined in the non-anticipative sense.	
\end{remark}
  Finally, we take as definition for  the log-price at the terminal time  $T$:  
\begin{align}\label{eq:logSsym2}
\log S_T = \log S_0 -\frac 1 2 \int_0^T X_s^2 ds + \rho N_T + \sqrt{1-\rho^2}\int_0^T X_s dW_s^{\perp}, \quad S_0>0,
\end{align}
 which is an $\mathcal F^W_T \vee \mathcal F_T$-measurable random variable. 
}

We state our main result of the section on the representation of the characteristic function for symmetric kernels. 
\begin{theorem}\label{T:symm}
	Let $K$ be as in \eqref{eq:decompK}, $g_0 \in L^2([0,T],\R)$ and set $\kappa=0$.
	Fix $u,w \in \mathbb C$ such that $\Re(u)=0$ and  $\Re(w)\leq 0$. Then,
	\begin{align}\label{eq:charfuncsym}
	\E\left[ \exp \left( u\log \frac{S_T}{S_0} +  w\int_0^T X_s^2 ds \right)  \right] =\frac{\exp\left( {\langle g_0 , \boldsymbol{\Psi}_0 g_0 \rangle_{L^2} } \right)}{\det\left(\boldsymbol{\Phi}_0  \right)^{1/2}},
	\end{align}
	with $\boldsymbol{\Psi}_0$ and $\boldsymbol{\tilde{\Sigma}}_0$ respectively  given by \eqref{def:riccati_operator} and \eqref{def:C_tilde},  for $(a,b)$ as in \eqref{eq:ab} (with $\kappa=0$), that is 
	\begin{align}
	a = w + \frac 1 2 (u^2-u), \quad b=\rho\nu u,
	\end{align}
	and 
	$\Phi_0 = (\id - b \boldsymbol K)(\id - 2 a\boldsymbol{\tilde \Sigma}_{0})(\id-b \boldsymbol K).$
\end{theorem}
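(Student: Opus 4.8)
The plan is to sidestep any dynamical consideration and reduce \eqref{eq:charfuncsym} to an infinite product of one-dimensional Gaussian integrals via the Karhunen--Lo\`eve expansion \eqref{eq:repX}. Throughout set $c_n:=\langle g_0,e_n\rangle_{L^2}$ and $\gamma_n:=c_n+\nu\sqrt{\lambda_n}\,\xi_n$, where $\xi_n=\int_0^Te_n(s)\,dW_s$ are i.i.d.\ standard Gaussians, so that $X_s=\sum_{n\geq1}\gamma_n e_n(s)$ in $L^2$, $\int_0^T X_s^2\,ds=\sum_{n\geq1}\gamma_n^2$ by orthonormality of $(e_n)$, and $N_T=\sum_{n\geq1}\gamma_n\xi_n$ by \eqref{eq:repX3}. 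First I would integrate out $W^{\perp}$: conditionally on $\mathcal F^W_T=\sigma(\xi_n:n\geq1)$ the path $X$ is deterministic and $W^{\perp}$ is still a Brownian motion, so $\int_0^T X_s\,dW^{\perp}_s$ is conditionally $\mathcal N\!\big(0,\int_0^T X_s^2\,ds\big)$; substituting the definition \eqref{eq:logSsym2} of $\log S_T$, conditioning on $\mathcal F^W_T$ and then taking the total expectation turns the left-hand side of \eqref{eq:charfuncsym} into $\E\big[\exp\big(\alpha\int_0^T X_s^2\,ds+\beta N_T\big)\big]$, where $\alpha=w+\tfrac12(u^2-u)-\tfrac12u^2\rho^2=a-\tfrac12u^2\rho^2$ and $\beta=\rho u$; note $\beta\nu=b$ and $\alpha=(2a\nu^2-b^2)/(2\nu^2)$ for $(a,b)$ as in \eqref{eq:ab} with $\kappa=0$. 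Under the standing hypotheses $\Re(u)=0$ and $\Re(w)\leq0$ one has $\Re(\alpha)\leq0$ and $\beta$ purely imaginary, hence every random variable integrated below has modulus $\leq1$ a.s., which makes the tower property, Fubini, and the passage to the limit in partial products all legitimate by dominated convergence.

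Next I would factorise over modes. Since $\gamma_n^2$ and $\gamma_n\xi_n$ are functions of $\xi_n$ alone, $\exp\big(\alpha\int_0^T X_s^2\,ds+\beta N_T\big)=\prod_{n\geq1}\exp(\alpha\gamma_n^2+\beta\gamma_n\xi_n)$ and, by independence of the $\xi_n$, the expectation factorises into $\prod_{n\geq1}\E[\exp(\alpha\gamma_n^2+\beta\gamma_n\xi_n)]$. Expanding $\gamma_n=c_n+\nu\sqrt{\lambda_n}\,\xi_n$, each factor has the form $\E[\exp(A_n+B_n\xi_n+C_n\xi_n^2)]$ with $A_n=\alpha c_n^2$, $B_n=c_n(2\alpha\nu\sqrt{\lambda_n}+\beta)$ and $C_n=\alpha\nu^2\lambda_n+\beta\nu\sqrt{\lambda_n}$, and a short computation gives $1-2C_n=(1-b\sqrt{\lambda_n})^2-2a\nu^2\lambda_n$ with $\Re(1-2C_n)\geq1$. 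Therefore the elementary one-dimensional Gaussian formula $\E[\exp(A+B\xi+C\xi^2)]=(1-2C)^{-1/2}\exp\!\big(A+\tfrac{B^2}{2(1-2C)}\big)$ applies with the principal square root, and the transform equals $\prod_{n\geq1}(1-2C_n)^{-1/2}\exp\!\big(A_n+\tfrac{B_n^2}{2(1-2C_n)}\big)$ --- the decomposition into a product of independent noncentral-$\chi^2$/Wishart transforms announced in the introduction.

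It then remains to identify this product with the right-hand side of \eqref{eq:charfuncsym}. Since $K$ is symmetric, $\boldsymbol K=\boldsymbol K^*$ and, from \eqref{eq:sigmakernel} at $t=0$, $\boldsymbol\Sigma_0=\nu^2\boldsymbol K^2$; consequently $\boldsymbol K$, $\boldsymbol\Sigma_0$, $\boldsymbol{\tilde{\Sigma}}_0$, $\boldsymbol\Phi_0$ and $\boldsymbol\Psi_0$ are simultaneously diagonalised by $(e_n)$ with respective eigenvalues $\sqrt{\lambda_n}$, $\nu^2\lambda_n$, $\nu^2\lambda_n/(1-b\sqrt{\lambda_n})^2$, $(1-b\sqrt{\lambda_n})^2-2a\nu^2\lambda_n=1-2C_n$, and $a/(1-2C_n)$. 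Reading off the quadratic form and the determinant, $\langle g_0,\boldsymbol\Psi_0 g_0\rangle_{L^2}=\sum_{n\geq1}\frac{a c_n^2}{1-2C_n}$ (absolutely convergent since $\sum_n c_n^2<\infty$ and $C_n\to0$) and $\det(\boldsymbol\Phi_0)=\prod_{n\geq1}(1-2C_n)$, so the claim reduces to the algebraic identity $A_n+\tfrac{B_n^2}{2(1-2C_n)}=\tfrac{a c_n^2}{1-2C_n}$, i.e.\ $\alpha(1-2C_n)+\tfrac12(2\alpha\nu\sqrt{\lambda_n}+\beta)^2=a$, which is checked by inserting $\alpha=(2a\nu^2-b^2)/(2\nu^2)$, $\beta=b/\nu$ and expanding. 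Summing over $n$ and collecting the square roots then yields \eqref{eq:charfuncsym}.

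The steps above are routine except for a handful of technical points, which is also where I expect the genuine difficulty to sit. These are: (i) making rigorous the conditional Gaussian law of $\int_0^T X\,dW^{\perp}$ given $\mathcal F^W_T$ together with the various interchanges --- all controlled by the uniform modulus bound on the integrands; (ii) the convergence of the infinite product $\prod_n(1-2C_n)$ and its identification with the Fredholm determinant $\det(\boldsymbol\Phi_0)$ of Appendix~\ref{A:trace}: when $b\neq0$ this needs $\boldsymbol K$, equivalently $\boldsymbol\Phi_0-\id$, to be trace-class, i.e.\ $\sum_n\sqrt{\lambda_n}<\infty$ --- a condition that is in fact already forced by the almost sure finiteness of $N_T$ in \eqref{eq:repX3}, and that holds in particular whenever $K$ is continuous, by Mercer's theorem; (iii) verifying that $\prod_n(1-2C_n)^{-1/2}$ is the principal branch of $\det(\boldsymbol\Phi_0)^{-1/2}$, which follows since each $1-2C_n$ lies in the open right half-plane so that no winding of the product occurs. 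I regard point (ii) --- bridging the formal Karhunen--Lo\`eve product over modes with the operator-theoretic Fredholm determinant --- as the substantive part of the argument.
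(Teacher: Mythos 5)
Your proposal is correct and follows essentially the same route as the paper's proof: integrate out $W^{\perp}$ by conditioning to reduce to $\E[\exp(\alpha\int_0^TX_s^2\,ds+\beta N_T)]$ with the same $(\alpha,\beta)$, factorise over the Karhunen--Lo\`eve modes, compute each factor as a Gaussian quadratic expectation (the paper phrases the per-mode computation via the $2\times2$ Wishart characteristic function of $(\tilde\xi_n,\xi_n)$ rather than your equivalent one-dimensional formula), and reassemble into $\boldsymbol{\Psi}_0$ and $\det(\boldsymbol{\Phi}_0)$ by simultaneous diagonalisation on $(e_n)$. Your technical remark (ii) on needing $\sum_n\sqrt{\lambda_n}<\infty$ for $N_T$ and for the absolute convergence of $\prod_n(1-2C_n)$ when $b\neq0$ is a fair observation that the paper passes over silently.
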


The rest of the section is dedicated to the proof of Theorem~\ref{T:symm}. The key idea is to rely on the spectral decomposition \eqref{eq:decompK} to  decompose the characteristic function as an infinite product
of independent Wishart distributions. The operators $\boldsymbol{\tilde{\Sigma}}_0$ and $\boldsymbol{\Psi}_0$ will then appear naturally after a rearrangement of the terms. 

In the sequel, to ease notations, we drop the subscript $L^2$ in the product $\langle \cdot, \cdot \rangle_{L^2}$.
We will start by computing the joint Fourier--Laplace transform of $\left(\int_0^{T} X_s^2 ds, N_T\right)$. Furthermore, the representation \eqref{eq:repX} readily leads to 
\begin{align}
\int_0^T X_s^2 ds &= \sum_{n\geq 1} \left( \langle g_0,e_n  \rangle + \sqrt{\lambda_n} \nu \xi_n   \right)^2. \label{eq:repX2} 
\end{align}

\begin{lemma}\label{L:X2XdW} 	 Let $K$ be as in \eqref{eq:decompK}, $g_0 \in L^2([0,T],\R)$, set $\kappa=0$ and fix {$\alpha,\beta \in \mathbb C$} such that 
	\begin{align}\label{eq:alphabeta}
	\Re(\alpha)\leq 0 , \quad \Re(\beta)=0.
	\end{align}
	Then,
	\begin{align}\label{eq:laplaceX2XdW}
	\E\left[ \exp \left( \alpha \int_0^T X_s^2 ds +  \beta {N_T} \right)  \right] = \frac{\exp\left( \left(\alpha + \frac{\beta^2}{2}\right)\sum_{n \geq 1}  \frac{\langle g_0,e_n \rangle^2}{1-2\beta\nu \sqrt{\lambda_n} - 2 \alpha \nu^2 \lambda_n} \right)}{\prod_{n \geq 1} \sqrt{1-2\beta\nu \sqrt{\lambda_n} - 2 \alpha \nu^2 \lambda_n}}.
	\end{align}
\end{lemma}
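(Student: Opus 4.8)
The plan is to reduce the computation to a countable product of elementary two-dimensional Gaussian integrals by exploiting the diagonalization \eqref{eq:repX}, \eqref{eq:repX2}, \eqref{eq:repX3}. Write $c_n = \langle g_0, e_n\rangle$ and recall $\xi_n = \int_0^T e_n(s)\,dW_s$ are i.i.d.\ standard Gaussians. Then $\int_0^T X_s^2\,ds = \sum_{n\geq 1}(c_n + \sqrt{\lambda_n}\nu\xi_n)^2$ and $N_T = \sum_{n\geq 1}(c_n + \sqrt{\lambda_n}\nu\xi_n)\xi_n$, so that
\begin{align}
\alpha \int_0^T X_s^2\,ds + \beta N_T = \sum_{n\geq 1}\Big( \alpha (c_n + \sqrt{\lambda_n}\nu\xi_n)^2 + \beta(c_n + \sqrt{\lambda_n}\nu\xi_n)\xi_n \Big).
\end{align}
Each summand is a quadratic form in the single Gaussian $\xi_n$, and the summands are independent across $n$. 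First I would establish the single-index identity: for a standard Gaussian $Z$ and constants $p,q,r$ with $\Re(p)\le 0$,
\begin{align}
\E\big[\exp(p Z^2 + q Z + r)\big] = \frac{1}{\sqrt{1-2p}}\exp\!\Big(\frac{q^2/2}{1-2p} + r\Big),
\end{align}
valid by analytic continuation from the real case (the integral converges since $\Re(p)\le 0$, and both sides are analytic in $p,q$ on the relevant domain). Matching coefficients, $\alpha(c_n+\sqrt{\lambda_n}\nu\xi_n)^2 + \beta(c_n+\sqrt{\lambda_n}\nu\xi_n)\xi_n$ has $\xi_n^2$-coefficient $p_n = \alpha\nu^2\lambda_n + \beta\nu\sqrt{\lambda_n}$, $\xi_n$-coefficient $q_n = 2\alpha\nu\sqrt{\lambda_n}\,c_n + \beta c_n$, and constant term $r_n = \alpha c_n^2$. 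Then $1-2p_n = 1 - 2\beta\nu\sqrt{\lambda_n} - 2\alpha\nu^2\lambda_n$, and a direct simplification of $q_n^2/2/(1-2p_n) + r_n$ gives $(\alpha + \beta^2/2)\,c_n^2/(1-2p_n)$, which is exactly the $n$-th term in the exponent of \eqref{eq:laplaceX2XdW}.

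Next I would multiply over $n$ using independence: by monotone/dominated convergence applied to the partial sums $S_N = \sum_{n\le N}(\cdots)$, one has $\E[\exp(\sum_n \cdots)] = \prod_{n\ge1}\E[\exp(\alpha(c_n+\sqrt{\lambda_n}\nu\xi_n)^2 + \beta(c_n+\sqrt{\lambda_n}\nu\xi_n)\xi_n)]$, provided the infinite product and sum converge. Convergence of $\prod_n \sqrt{1-2p_n}$ and of $\sum_n c_n^2/(1-2p_n)$ follows from $\sum_n \lambda_n < \infty$ (so $p_n \to 0$, and $|p_n| \le C\sqrt{\lambda_n}$ is summable after squaring — more precisely $\sum_n |p_n|^2 < \infty$ suffices for the product, and one checks $1-2p_n$ stays away from $0$ uniformly for large $n$) together with $\sum_n c_n^2 = \|g_0\|_{L^2}^2 < \infty$ and the fact that $1-2p_n$ is bounded below in modulus. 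A small point to address: since $\Re(\alpha)\le 0$ and $\Re(\beta)=0$, one must check $\Re(p_n)\le 0$, i.e.\ $\Re(\alpha\nu^2\lambda_n + \beta\nu\sqrt{\lambda_n}) = \nu^2\lambda_n\Re(\alpha) \le 0$, which holds; this guarantees each factor's integral converges and that $|1-2p_n| \ge 1$ when combined with $\Re(\beta\nu\sqrt{\lambda_n}) = 0$ (indeed $\Re(1-2p_n) = 1 - 2\nu^2\lambda_n\Re(\alpha) \ge 1$), so the product never vanishes and the branch of the square root is unambiguous.

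The main obstacle is not any single deep step but the careful bookkeeping: (i) justifying the interchange of $\E$ and the infinite product (independence plus a domination/truncation argument, using that the real parts of the exponents are $\le 0$ so all partial products are bounded by $1$ in modulus up to the deterministic factor); (ii) verifying absolute convergence of the infinite sum and product in the complex setting from $\sum\lambda_n<\infty$ and $g_0\in L^2$; and (iii) the algebraic simplification $\frac{q_n^2/2}{1-2p_n} + r_n = \frac{(\alpha+\beta^2/2)c_n^2}{1-2p_n}$, which is routine but must be done cleanly. Everything else is the standard Gaussian moment generating function extended by analyticity.
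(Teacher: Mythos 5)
Your proposal is correct and follows essentially the same route as the paper: decompose via the Karhunen--Lo\`eve/spectral representation into independent per-mode contributions, compute each factor as a Gaussian quadratic-form expectation, and multiply using independence plus dominated convergence (the coefficient matching and the simplification to $(\alpha+\beta^2/2)c_n^2/(1-2p_n)$ check out). The only difference is cosmetic: the paper treats each mode as a degenerate bivariate Gaussian $(\tilde\xi_n,\xi_n)$ and invokes the $2\times2$ Wishart characteristic-function formula, whereas you substitute $\tilde\xi_n=c_n+\nu\sqrt{\lambda_n}\xi_n$ and use the scalar Gaussian moment generating function directly.
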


\begin{proof}
	Define $U_T=\alpha \int_0^T X_s^2 ds +  \beta  {N_T}$.
	We first observe that \eqref{eq:alphabeta} yields that $\left|\exp \left( U_T \right)\right|=\exp(\Re(U_T)) \leq 1$, so that $\E\left[ \exp \left( U_T \right)  \right]$ is finite.  By virtue of the representations \eqref{eq:repX3} and \eqref{eq:repX2}, we have 
	\begin{align}
	U_T = \sum_{n\geq 1} \alpha \tilde \xi_n^2 + \beta \tilde \xi_n \xi_n,
	\end{align}
	where $\tilde \xi_n = \left( \langle g_0, e_n \rangle + \nu \sqrt{\lambda_n} \xi_n \right)$,  for each $n\geq 1$. Setting $Y_n =(\tilde \xi_n,\xi_n)^\top  $, it follows that $(Y_n)_{n\geq 1}$ are independent such that each $Y_n$ is a two dimensional Gaussian vector with mean $\mu_n$  and covariance matrix  $\Sigma_n$ given by 
	\begin{align}\label{eq:munsigman}
\mu_n=	\begin{pmatrix}
	\langle g_0,e_n\rangle\\
	0
	\end{pmatrix} \quad  \mbox{and} \quad \Sigma_n	 = \begin{pmatrix}
	\nu^2 \lambda_n & \nu \sqrt{\lambda_n}\\
	\nu \sqrt{\lambda_n} & 1 
	\end{pmatrix}.
	\end{align} 
	Furthermore, we have 
	$$  U_T = \sum_{n\geq 1} Y_n^\top   w_n Y_n,  $$
	with 
	$$w_n	 = \begin{pmatrix}
	\alpha & \frac{\beta}{2}\\
	\frac{\beta}{2} & 0 
	\end{pmatrix}.$$
	By successively using the independence of $Y_n$ and the well-known expression for the characteristic function of the Wishart distribution, see for instance \citet[Proposition A.1]{AJ2019laplace}, we get
	\begin{align}
	\E\left[  \exp(U_T)\right] &= 	\E\left[  \exp\left( \sum_{n\geq 1} Y_n^\top   w_n Y_n\right)\right] \\
	&= \prod_{n \geq 1} \E\left[\exp\left( Y_n^\top  w_n Y_n\right)\right]\\
	&= \prod_{n \geq 1} \frac{\exp\left( \tr\left( w_n \left(I_2 - 2\Sigma_nw_n \right)^{-1} \mu_n \mu_n^\top  \right)\right)}{\det\left(I_2 - 2\Sigma_nw_n \right)^{1/2}} .
	\end{align}
	We now compute the right hand side. We have
	$$ (I_2-2\Sigma_n w_n)	 = \begin{pmatrix}
1-2\alpha	\nu^2 \lambda_n -\beta  \nu \sqrt{\lambda_n} &  -\beta \nu^2 \lambda_n\\
	-2\alpha \nu \sqrt{\lambda_n} -\beta &1-\beta\nu \sqrt{\lambda_n}  
	\end{pmatrix}$$
	so that 
	$$ \det (I_2-2\Sigma_n w_n) = 1-2\beta\nu \sqrt{\lambda_n} - 2 \alpha \nu^2 \lambda_n $$
	and 
	$$ (I_2-2\Sigma_n w_n)^{-1}= \frac{1}{1-2\beta\nu \sqrt{\lambda_n} - 2 \alpha \nu^2 \lambda_n} \begin{pmatrix}
	1-\beta\nu \sqrt{\lambda_n} &  \beta \nu^2 \lambda_n\\
	2\alpha \nu \sqrt{\lambda_n} +\beta   &1-2\alpha	\nu^2 \lambda_n -\beta  \nu \sqrt{\lambda_n}
	\end{pmatrix}.$$
	Straightforward computations lead to the claimed expression \eqref{eq:laplaceX2XdW}. 
\end{proof}

Relying on the spectral decomposition \eqref{eq:decompK}, we re-express the quantities entering in \eqref{eq:laplaceX2XdW} in terms of  suitable operators.  
\begin{lemma}\label{L:symspectral}
	 Let $K$ be as in \eqref{eq:decompK}, set $\kappa=0$ and fix {$\alpha,\beta \in \mathbb C$} as in \eqref{eq:alphabeta}.
	Then,  the following operator defined by \eqref{def:riccati_operator} with $a=\alpha + \frac{\beta^2}2$ and $b=\nu\beta$:
		\begin{align}\label{def:riccati_operatoren0}
	\boldsymbol{\Psi}^{\alpha,\beta}_{0}= \left( \id - b \boldsymbol{K}^* \right)^{-1} a\left( \id - 2 \boldsymbol{\tilde \Sigma}_{0}a  \right)^{-1}     \left( \id - b \boldsymbol{K} \right)^{-1},  \quad  t\leq T, 
	\end{align}
	admits the following decomposition
	\begin{align}\label{eq:determinant0}
		\boldsymbol{\Psi}^{\alpha,\beta}_{0} = \sum_{n\geq 1} \frac{\alpha + \frac{\beta^2}{2}}{1-2\beta\nu \sqrt{\lambda_n} - 2 \alpha \nu^2 \lambda_n} \langle e_n, \boldsymbol{\cdot}\, \rangle  e_n
	\end{align}
	and 
	\begin{align}\label{eq:determinant1}
	\det \left( \frac 1 {\alpha + \frac{\beta^2}2} \boldsymbol{\Psi}_0^{\alpha,\beta}  \right)={\prod_{n \geq 1} \frac{1}{{1-2\beta\nu \sqrt{\lambda_n} - 2 \alpha \nu^2 \lambda_n}}},
	\end{align}
	with the convention that $0/0=1$. In particular, 
		\begin{align}\label{eq:laplaceX2XdW2}
\!	\E\left[ \exp \left( \alpha \int_0^T X_s^2 ds +  \beta {N_T} \right)  \right] = 	\det \left( \frac 1 {\alpha + \frac{\beta^2}2} \boldsymbol{\Psi}_0^{\alpha,\beta}  \right)^{1/2}{\exp\left( \langle g_0, \boldsymbol{\Psi}^{\alpha,\beta}_{0} g_0 \rangle \right)}.\,\,\;\;
	\end{align}
\end{lemma}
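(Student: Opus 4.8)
The plan is to identify the diagonal spectral representation of the operator $\boldsymbol{\Psi}_0^{\alpha,\beta}$ and then match it term by term with the explicit scalar expression from Lemma~\ref{L:X2XdW}. First I would record that, since $(e_n)_{n\geq 1}$ is an orthonormal basis diagonalising $\boldsymbol K$ (by \eqref{eq:decompK}, $\boldsymbol K e_n = \sqrt{\lambda_n}\, e_n$ and $\boldsymbol K^* = \boldsymbol K$), every operator built functionally out of $\boldsymbol K$ is diagonal in the same basis. In particular $(\id - b\boldsymbol K)^{-1} e_n = (1 - b\sqrt{\lambda_n})^{-1} e_n$, provided $b = \nu\beta$ does not hit the reciprocal of an eigenvalue; with $\Re(\beta)=0$ the quantity $1-b\sqrt{\lambda_n}=1-\nu\beta\sqrt{\lambda_n}$ has modulus $\geq 1$, so all these inverses are well defined and bounded. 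Next I would compute $\boldsymbol{\Sigma}_0$ in this basis: from \eqref{eq:sigmakernel} with $t=0$, $\Sigma_0(s,u) = \nu^2 \int_0^T K(s,z)K(u,z)\,dz$, i.e. $\boldsymbol{\Sigma}_0 = \nu^2 \boldsymbol K \boldsymbol K^* = \nu^2 \boldsymbol K^2$, hence $\boldsymbol{\Sigma}_0 e_n = \nu^2 \lambda_n e_n$. Feeding this into \eqref{def:C_tilde}, $\boldsymbol{\tilde\Sigma}_0 e_n = \nu^2\lambda_n (1-b\sqrt{\lambda_n})^{-2} e_n$.

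Then I would assemble $\boldsymbol{\Psi}_0^{\alpha,\beta}$ from \eqref{def:riccati_operatoren0} acting on $e_n$, one factor at a time. Applying $(\id - b\boldsymbol K)^{-1}$ gives a factor $(1-b\sqrt{\lambda_n})^{-1}$; applying $a(\id - 2\boldsymbol{\tilde\Sigma}_0 a)^{-1}$ gives a factor $a\big(1 - 2a\nu^2\lambda_n(1-b\sqrt{\lambda_n})^{-2}\big)^{-1}$; applying $(\id - b\boldsymbol K^*)^{-1}$ gives another factor $(1-b\sqrt{\lambda_n})^{-1}$. Multiplying the scalars and clearing the $(1-b\sqrt{\lambda_n})^2$ in the denominator, the eigenvalue of $\boldsymbol{\Psi}_0^{\alpha,\beta}$ on $e_n$ becomes
\begin{align}
\frac{a}{(1-b\sqrt{\lambda_n})^2 - 2a\nu^2\lambda_n}.
\end{align}
With $a = \alpha + \tfrac{\beta^2}{2}$ and $b = \nu\beta$, expanding $(1-\nu\beta\sqrt{\lambda_n})^2 = 1 - 2\nu\beta\sqrt{\lambda_n} + \nu^2\beta^2\lambda_n$ and subtracting $2a\nu^2\lambda_n = 2\alpha\nu^2\lambda_n + \beta^2\nu^2\lambda_n$, the $\beta^2\nu^2\lambda_n$ terms cancel and the denominator collapses to exactly $1 - 2\beta\nu\sqrt{\lambda_n} - 2\alpha\nu^2\lambda_n$. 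This yields \eqref{eq:determinant0}, the convention $0/0=1$ covering the degenerate case $a=0$ (equivalently $\lambda_n=0$). Once \eqref{eq:determinant0} is established, \eqref{eq:determinant1} is immediate: the operator $\tfrac{1}{a}\boldsymbol{\Psi}_0^{\alpha,\beta}$ is $\sum_n (1-2\beta\nu\sqrt{\lambda_n}-2\alpha\nu^2\lambda_n)^{-1}\langle e_n,\cdot\rangle e_n$, whose Fredholm determinant is $\prod_n(1-2\beta\nu\sqrt{\lambda_n}-2\alpha\nu^2\lambda_n)^{-1}$ — here I would invoke that this is a product over eigenvalues of an operator differing from the identity by a trace-class operator, using $\sum_n\lambda_n<\infty$ from \eqref{eq:sumlamb} to justify trace-class membership. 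Similarly, $\langle g_0,\boldsymbol{\Psi}_0^{\alpha,\beta}g_0\rangle = \sum_n \tfrac{a}{1-2\beta\nu\sqrt{\lambda_n}-2\alpha\nu^2\lambda_n}\langle g_0,e_n\rangle^2$ by Parseval applied to $g_0 = \sum_n\langle g_0,e_n\rangle e_n$.

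Finally, \eqref{eq:laplaceX2XdW2} follows by substituting these two identities into the right-hand side of Lemma~\ref{L:X2XdW}: the exponent $(\alpha+\tfrac{\beta^2}{2})\sum_n\tfrac{\langle g_0,e_n\rangle^2}{1-2\beta\nu\sqrt{\lambda_n}-2\alpha\nu^2\lambda_n} = a\sum_n \tfrac{\langle g_0,e_n\rangle^2}{\cdots}$ is precisely $\langle g_0,\boldsymbol{\Psi}_0^{\alpha,\beta}g_0\rangle$, and the prefactor $\prod_n(1-2\beta\nu\sqrt{\lambda_n}-2\alpha\nu^2\lambda_n)^{-1/2}$ is precisely $\det(\tfrac{1}{a}\boldsymbol{\Psi}_0^{\alpha,\beta})^{1/2}$. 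I expect the only genuinely delicate point to be the bookkeeping around degeneracies and convergence: verifying that the denominators never vanish under \eqref{eq:alphabeta} (so all the inverse operators are bona fide bounded operators), that the relevant operators are trace-class so that the Fredholm determinant is the convergent infinite product, and that the $0/0=1$ convention consistently handles vanishing eigenvalues $\lambda_n$ and the case $a=0$. The algebraic cancellation itself — the collapse of $(1-b\sqrt{\lambda_n})^2 - 2a\nu^2\lambda_n$ to $1-2\beta\nu\sqrt{\lambda_n}-2\alpha\nu^2\lambda_n$ — is short and mechanical and should be presented inline.
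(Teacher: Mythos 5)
Your proposal is correct and follows essentially the same route as the paper: diagonalise $\boldsymbol K$, $\boldsymbol\Sigma_0=\nu^2\boldsymbol K\boldsymbol K^*$, $\boldsymbol{\tilde\Sigma}_0$ and hence $\boldsymbol{\Psi}_0^{\alpha,\beta}$ in the basis $(e_n)$, carry out the cancellation $(1-\nu\beta\sqrt{\lambda_n})^2-2a\nu^2\lambda_n=1-2\beta\nu\sqrt{\lambda_n}-2\alpha\nu^2\lambda_n$, and plug the resulting eigenvalue expansion into Lemma~\ref{L:X2XdW}. The only cosmetic slip is the parenthetical identifying the $0/0$ convention with ``$a=0$ (equivalently $\lambda_n=0$)'' --- these are not equivalent --- but this does not affect the argument.
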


\begin{proof} {Throughout the proof, we will make use of the following rule for computing the decomposition of a product of operators in terms of the orthonormal basis $(e_n)_{n \geq 1}$: for $\boldsymbol{K}$ and $\boldsymbol{L}$ in the form
$$ \boldsymbol{K} = \sum_{n \geq 1} a_n \langle e_n, \boldsymbol{\cdot}\, \rangle  e_n , \quad  \boldsymbol{L} = \sum_{n \geq 1} b_n \langle e_n, \boldsymbol{\cdot}\, \rangle  e_n , $$	
the composition is given by
$$ \boldsymbol{K} \boldsymbol{L} = \sum_{n \geq 1} a_n \langle e_n, \sum_{m \geq 1} b_m   e_m  \langle e_n, \boldsymbol{\cdot}\, \rangle  \rangle  e_n = \sum_{n\geq 1} a_n b_n \langle e_n, \boldsymbol{\cdot}\, \rangle  e_n. $$	
}
It follows from \eqref{eq:decompK} that 
	\begin{align}
	(\id -b\boldsymbol K) =\sum_{n\geq 1} \left(1-b\sqrt{\lambda_n}\right)   \langle e_n, \boldsymbol{\cdot}\, \rangle e_n.
	\end{align}
Since $\Re(\beta)=0$,  $\Re(1- b \sqrt{\lambda_n})=1\neq 0$ for each $n\geq 1$, so that $(\id -b\boldsymbol K)$ is invertible with an inverse given by 
	\begin{align}\label{tempidK-1}
(\id -b\boldsymbol K)^{-1} =\sum_{n\geq 1} \frac 1 { 1-b\sqrt{\lambda_n}}   \langle e_n, \boldsymbol{\cdot}\, \rangle e_n.
\end{align}
Similarly,  recalling \eqref{eq:sigmakernel}, \eqref{eq:decompK} leads to the representation of $\boldsymbol{\Sigma}_0=\nu^2 \boldsymbol{K} \boldsymbol{K}^*$:
\begin{align}
\boldsymbol{\Sigma}_0 = \sum_{n\geq 1} \nu^2 \lambda_n   \langle e_n, \boldsymbol{\cdot}\, \rangle e_n,
\end{align}
so that $\boldsymbol{\tilde \Sigma}_{0}$ given by \eqref{def:C_tilde} reads
\begin{align}
\boldsymbol{\tilde \Sigma}_0 = \sum_{n\geq 1} \frac { \nu^2 \lambda_n } { \left(1-b\sqrt{\lambda_n}\right)^2}    \langle e_n, \boldsymbol{\cdot}\, \rangle e_n.
\end{align}
Whence, 
\begin{align}
\left( \id - 2 a \boldsymbol{\tilde \Sigma}_{0} \right) = \sum_{n\geq 1} \frac {\left(1-b\sqrt{\lambda_n}\right)^2- 2a\nu^2 \lambda_n } { \left(1-b\sqrt{\lambda_n}\right)^2}    \langle e_n, \boldsymbol{\cdot}\, \rangle e_n.
\end{align}
Recalling that $a=\alpha + \frac{\beta^2}2$ and $b=\nu\beta$, 
 $\left(\left(1-b\sqrt{\lambda_n}\right)^2- 2a\nu^2 \lambda_n\right) = 1- 2 \nu \beta \sqrt{\lambda_n} - 2\alpha \nu^2 \lambda_n $. Since $\Re(\alpha)\leq 0$ and $\Re(\beta)=0$, we have that $\Re(1- 2 \nu \beta \sqrt{\lambda_n} - 2\alpha \nu^2 \lambda_n)>0$  so that $\left( \id - 2 a\boldsymbol{\tilde \Sigma}_{0}  \right)$ is invertible with an inverse given by 
\begin{align}
\left( \id - 2 a \boldsymbol{\tilde \Sigma}_{0}  \right)^{-1} = \sum_{n\geq 1} \frac  { \left(1-\nu \beta \sqrt{\lambda_n}\right)^2} { 1- 2 \nu \beta \sqrt{\lambda_n} - 2\alpha \nu^2 \lambda_n  }   \langle e_n, \boldsymbol{\cdot}\, \rangle e_n.
\end{align}
The representations \eqref{eq:determinant0}-\eqref{eq:determinant1} readily follows after composing by $(\id-b\boldsymbol{K}^*)^{-1}a$ from the left, by $(\id-b\boldsymbol{K})^{-1}$ from the right and recalling  \eqref{tempidK-1}. Finally, combining these expressions with \eqref{eq:laplaceX2XdW}, we obtain \eqref{eq:laplaceX2XdW2}. This ends the proof. 
\end{proof}

We can now complete the proof of Theorem~\ref{T:symm}.
 
\begin{proof}[Proof of Theorem~\ref{T:symm}]
		It suffices to prove that
	\begin{align}\label{eq:tempchar}
	\E\left[ \exp \left( u\log \frac{S_T}{S_0} +  w\int_0^T X_s^2 ds \right)  \right] = \E\left[ \exp \left( \alpha \int_0^T X_s^2 ds +  \beta {N_T}\right)  \right],
	\end{align}
	where 
\begin{align}
\alpha  = w + \frac{1}{2}(u^2-u) - \frac{\rho^2u^2}{2} \quad \mbox{and} \quad \beta= \rho u. 
\end{align}
	Indeed, if this the case, then
$$	\Re(\alpha)= \Re(w) + \frac 1 2 (\rho^2-1)\Im(u)^2  \leq 0,$$
	 so that an application of Lemma~\ref{L:symspectral}  yields the expression \eqref{eq:charfuncsym}. \\
It remains to prove \eqref{eq:tempchar} by means of a projection argument. 
{Conditional on  $(\mathcal F^{X}_t\vee \mathcal F^W_t)_{t\leq T}$,  by independence of $X$ and $W^{\perp}$,  the random variable $\int_0^T X_s dW_s^{\perp}$ is centered gaussian with variance $\int_0^T X_s^2 ds$ so that}
\begin{align}
M_T:&=\E\left[ \exp\left( u\sqrt{1-\rho^2}\int_0^T X_sdW^{\perp}_s\right)\Mid\! (\mathcal F^{X}_t\vee \mathcal F^W_t)_{t\leq T}\right] \\
&=\exp\left( \frac {u^2(1-\rho^2)} 2  \int_0^T X_s^2 ds  \right) \label{eq:tempM}.
\end{align}
A successive application of  the tower property of the conditional expectation on the  expression  \eqref{eq:logSsym2}  yields that
\begin{align*}
	\E\left[ \exp \left( u\log \frac{S_T}{S_0} +  w\int_0^T X_s^2 ds \right)  \right] &=  \E\left[ \E\left[\exp \left( u\log \frac{S_T}{S_0} +  w\int_0^T X_s^2 ds \right) \Mid (\mathcal F^{X}_t\vee \mathcal F^W_t)_{t\leq T} \right] \right] \\
	&=	\EE\left[ \exp\left(  \left(w-\frac u 2  \right)\int_0^T X_s^2 ds + \rho u \int_0^T X_s dW_s  \right)  M_T\right]
\end{align*}
leading to \eqref{eq:tempchar} due to \eqref{eq:tempM}. This ends the proof.
\end{proof}
	
\section{Volterra kernels: a dynamical approach}\label{S:volterra}

In this section, we treat the class of Volterra kernels which are practically relevant in mathematical finance. We will consider 
the class of Volterra kernels of continuous and  bounded type  in $L^2$ in  the terminology of \citet[Definitions 9.2.1, 9.5.1 and 9.5.2]{GLS:90}. 

\begin{definition}\label{D:kernelvolterra} A kernel $K:[0,T]^2 \to \R$  is a Volterra kernel of continuous and  bounded type  in $L^2$ if   $K(t,s)=0$ whenever $s \geq t$ and
	\begin{align}
	\label{assumption:K_stein}
	\sup_{t\in [0,T]}\int_0^T |K(t,s)|^2 ds <  \infty, \quad  \lim_{h \to 0} \int_0^T |K(u+h,s)-K(u,s)|^2 ds=0, \quad u \leq T.
	\end{align}
\end{definition}

The following kernels are   of continuous and  bounded type  in $L^2$.

\begin{example}
	\begin{enumerate}
				\item 
		Any convolution kernel of the form $K(t,s)=k(t-s)\bold 1_{s< t}$	with $k\in L^2([0,T],\R)$. {Indeed, 
		$$ \sup_{t\leq T} \int_0^T |K(t,s)|^2 ds  = \sup_{t\leq T} \int_0^t |k(t-s)|^2 ds \leq   \int_0^T |k(s)|^2 ds<\infty$$ 
	yielding the first part of \eqref{assumption:K_stein}. The second part follows from the  $L^2$-continuity of $k$, see \cite[Lemma 4.3]{brezis2010functional}.}
		\item 
		For $H\in (0,1)$, 
			\begin{align*}
		K(t,s)= \bm {1}_{s< t}\frac{(t-s)^{H-1/2}}{\Gamma(H+\frac 1 2)} \, {}_2 F_1\left(H-\frac 1 2; \frac 1 2-H; H+\frac 1 2; 1-\frac t s \right),
		\end{align*}
		where ${}_2F_1$ is the Gauss hypergeometric function. Such kernel enters in the Volterra representation  \eqref{eq:steinsteinX} of   the fractional Brownian motion  whose covariance function is $\Sigma_0(s,u)= (K\star K^*)(s,u)=\frac {1} 2 (s^{2H}+u^{2H}-|s-u|^{2H})$, see  \citet{decreusefond1999stochastic}. {In this case, 
			$$ \sup_{t\leq T} \int_0^T |K(t,s)|^2 ds = \sup_{t\leq T}  \Sigma_0(t,t) \leq T^{2H} $$
			and by developing the square
			\begin{align*}
	\int_0^T |K(u+h,s)-K(u,s)|^2 ds = \Sigma_0(u+h,u+h)- 2\Sigma_0(u+h,u) + \Sigma_0(u,u)	
			\end{align*}	
		which goes to $0$ as $h\to 0$.}
		\item
		Continuous kernels $K$ on $[0,T]^2$. This is the case for instance for the Brownian Bridge $W^{T_1}$ conditioned to be equal to $W^{T_1}_0$ at a time $T_1$: for all $T<T_1$, $W^{T_1}$ admits the Volterra representation \eqref{eq:steinsteinX} on $[0,T]$ with the continuous kernel 
		$ K(t,s)  = \bm {1}_{s< t} (T_1-t)/(T_1-s)$, for all $s,t\leq T$.
		\item 
		If $K_1$ an $K_2$ satisfy  \eqref{assumption:K_stein} then so does $K_1\star K_2$ by an application of  Cauchy-Schwarz inequality.
	\end{enumerate}
\end{example}

Throughout this section, we fix a probability space $(\Omega,\mathcal F, (\mathcal F_t)_{t\leq T} ,\Q)$ supporting a two dimensional Brownian motion $(W,W^{\perp})$ and we set $B=\rho W + \sqrt{1-\rho^2} W^{\perp}$. 
{For any  Volterra kernel $K$ of continuous and bounded type  in $L^2$, and any $g_0 \in L^2([0,T],\R)$, there exists a progressively measurable  $\R\times \R_+$-valued strong solution $(X,S)$ to \eqref{eq:steinsteinS}-\eqref{eq:steinsteinX}  such that 
	\begin{align}\label{eq:momentsX}
	\sup_{t\leq T} \E\left[ |X_t|^p \right] < \infty, \quad p \geq 1,
	\end{align}
we refer to Theorem~\ref{T:existenceappendix} below for the proof. {It follows in particular from \eqref{eq:momentsX} that $\int_0^T X_s^2 ds < \infty$ almost surely, so that $X$ has sample paths in $L^2([0,T],\R)$.}

We now state our main result on the representation of the Fourier--Laplace transform for Volterra kernels  under  the following additional assumption on the kernel:
\begin{align}\label{eq:assumptionkerneldiff1}
\sup_{t\leq T} \int_0^T |K(s,t)|^2 ds < \infty.
\end{align}

\begin{theorem}\label{T:volterra}
	Let  $g_0\in L^2([0,T],\R)$ and $K$ be a Volterra kernel as in Definition~\ref{D:kernelvolterra} satisfying \eqref{eq:assumptionkerneldiff1}.
	Fix $u,w\in \mathbb C$, such that $0\leq \Re(u) \leq 1$ and $\Re(w)\leq0$.
	Then, 
	\begin{align}\label{eq:charvolterra}
\!\!\!\!\!\!\!\!	\E\left[ \exp\left(u \log \frac{S_T}{S_t} + w \int_t^T X_s^2 ds  \right) \! \!\Mid \mathcal \! \mathcal F_t\right] = {\exp\left( \phi_t + \langle g_t, \boldsymbol{\Psi}_{t} g_t \rangle_{L^2} \right)},
	\end{align}
	for all $t\leq T$, with $\boldsymbol{\Psi}_t$ given by \eqref{def:riccati_operator} for $(a,b)$ as in \eqref{eq:ab} and 
{\begin{align}\label{eq:phii}
\phi_t =- \int_t^T \Tr(\boldsymbol{\Psi}_t \boldsymbol{\dot{\Sigma}}_t)dt,
\end{align}
	where $\dot{\boldsymbol{{\Sigma}}}_t$ is the strong derivative\footnote{See Lemma~\ref{L:Psi} below.} of $t\mapsto\bold{\Sigma}_t$ induced by the  kernel
\begin{align}
\dot{\Sigma}_t(s,u)=-\nu^2 K(s,t) K(u,t) , \quad a.e.
\end{align}
and $\Tr$ is the trace operator, see Appendix~\ref{A:trace}.}
\end{theorem}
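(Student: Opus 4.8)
The plan is to derive the conditional Fourier--Laplace transform by a dynamical (martingale) argument built on the adjusted conditional mean process. First I would introduce, for fixed $s\le T$, the process $t\mapsto g_t(s)$ defined in \eqref{eq:condmean}, and show it is a martingale (in $t$) with values in $L^2([0,T],\R)$: indeed, since $X$ solves the Volterra equation \eqref{eq:steinsteinX}, the quantity $X_s-\int_t^T K(s,r)\kappa X_r\,dr$ can be expressed via the resolvent of $\kappa K$ as a stochastic integral against $dW$ plus an $\mathcal F_t$-measurable part, so conditioning on $\mathcal F_t$ yields a martingale. The martingale representation gives $dg_t(s)=\nu\,\tilde K(s,t)\,dW_t$ for an appropriate kernel $\tilde K$ (the resolvent-adjusted version of $K$), and the quadratic (co)variation structure is governed precisely by $\dot{\boldsymbol\Sigma}_t$ as in the statement. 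The $L^2$-continuity and boundedness conditions in Definition~\ref{D:kernelvolterra} together with \eqref{eq:assumptionkerneldiff1} are exactly what is needed to make $t\mapsto g_t$ a continuous $L^2$-valued process and $t\mapsto\boldsymbol\Sigma_t$ strongly differentiable with derivative $\dot{\boldsymbol\Sigma}_t$.

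Next I would write down the candidate exponential-quadratic functional $M_t:=\exp\bigl(\phi_t+\langle g_t,\boldsymbol\Psi_t g_t\rangle+ (\text{contribution of }\int_0^t X_s^2ds\text{ and }\log S_t)\bigr)$ — more precisely, since \eqref{eq:charvolterra} is a conditional expectation given $\mathcal F_t$, I would define $M_t$ as the left-hand side of \eqref{eq:charvolterra} multiplied by $\exp(u\log S_t + w\int_0^t X_s^2 ds)$-type prefactors so that $M$ is a true $\mathcal F_t$-martingale, and aim to show $M_t$ equals $\exp(u\log S_t+w\int_0^t X_s^2ds)\cdot\exp(\phi_t+\langle g_t,\boldsymbol\Psi_t g_t\rangle)$. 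Applying Itô's formula to $t\mapsto\langle g_t,\boldsymbol\Psi_t g_t\rangle$ — using the martingale dynamics of $g_t$, the (operator-valued) time derivative $\dot{\boldsymbol\Psi}_t$ of $\boldsymbol\Psi_t$, and the Itô correction term which produces a trace $\Tr(\boldsymbol\Psi_t\dot{\boldsymbol\Sigma}_t)$ — I would collect the drift terms. The requirement that $M$ be a local martingale forces the drift to vanish, which yields (i) an operator Riccati equation for $\boldsymbol\Psi_t$ whose solution is exactly \eqref{def:riccati_operator}, and (ii) the ODE $\dot\phi_t=\Tr(\boldsymbol\Psi_t\dot{\boldsymbol\Sigma}_t)$ with terminal condition $\phi_T=0$, i.e. \eqref{eq:phii}. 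One must also handle the cross terms between $\log S$, $\int X^2$, and $g$: the $\rho\nu u$ part of $b$ in \eqref{eq:ab} arises precisely from the $dW$-covariation between $\log S_t=\cdots+\rho\int X\,dW+\cdots$ and $g_t$, while $a=w+\tfrac12(u^2-u)$ collects the drift of $u\log S_t + w\int X^2$.

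An alternative, cleaner route — and the one I would actually favour for the write-up — is to bypass solving the Riccati equation dynamically and instead verify directly that the claimed operator $\boldsymbol\Psi_t$ and scalar $\phi_t$ make $M_t$ a martingale, then check $M_T$ has the right terminal value (namely $M_T=\exp(u\log S_T+w\int_0^T X^2)$, using $g_T=0$ when $K$ is Volterra so that $g_T(s)=\mathbf 1_{\{T\le s\}}(\cdots)$ collapses, $\phi_T=0$, and $\boldsymbol\Sigma_T=0$). For this I would need the algebraic identities: $\boldsymbol\Psi_t=(\id-b\boldsymbol K^*)^{-1}a(\id-2a\tilde{\boldsymbol\Sigma}_t)^{-1}(\id-b\boldsymbol K)^{-1}$ satisfies, under the $L^2$ strong derivative, a relation of the form $\dot{\boldsymbol\Psi}_t = -2\,\boldsymbol\Psi_t(\id-b\boldsymbol K)\dot{\boldsymbol\Sigma}_t(\id-b\boldsymbol K^*)\boldsymbol\Psi_t + (\text{terms cancelling against the }g\text{-drift})$, which follows by differentiating $(\id-2a\tilde{\boldsymbol\Sigma}_t)^{-1}$ and using $\dot{\tilde{\boldsymbol\Sigma}}_t=(\id-b\boldsymbol K)^{-1}\dot{\boldsymbol\Sigma}_t(\id-b\boldsymbol K^*)^{-1}$. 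Invertibility of $\id-2a\tilde{\boldsymbol\Sigma}_t$ throughout $[0,T]$ under $\Re(a)\le0$ (which holds since $\Re(w)\le0$ and $0\le\Re(u)\le1$ give $\Re(\tfrac12(u^2-u))\le0$) must be established, e.g. via a monotonicity/spectral argument on the self-adjoint nonnegative operator $\boldsymbol\Sigma_t$.

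The main obstacle I anticipate is the functional-analytic bookkeeping of the Itô formula in infinite dimensions: justifying that $t\mapsto g_t$ is an $L^2$-valued semimartingale with the stated quadratic variation operator $\dot{\boldsymbol\Sigma}_t$, that $t\mapsto\boldsymbol\Psi_t$ is strongly differentiable with the Itô term genuinely producing the trace-class object $\Tr(\boldsymbol\Psi_t\dot{\boldsymbol\Sigma}_t)$ (so that $\phi_t$ in \eqref{eq:phii} is well-defined and finite — this is where $\dot{\boldsymbol\Sigma}_t(s,u)=-\nu^2K(s,t)K(u,t)$ being a rank-one, hence trace-class, kernel is crucial), and that all the operator inverses appearing in $\boldsymbol\Psi_t$ exist and depend continuously on $t$. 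The algebraic manipulations (matching $a$, $b$, the Riccati cancellation) are essentially the finite-dimensional computation carried out at the operator level and should go through once the analytic framework — presumably set up in the lemmas referenced as Lemma~\ref{L:Psi} and the trace appendix — is in place; the moment bound \eqref{eq:momentsX} supplies the uniform integrability needed to pass from local martingale to martingale and conclude $M_t=\E[M_T\mid\mathcal F_t]$.
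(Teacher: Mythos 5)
Your overall strategy coincides with the paper's: define the exponential semimartingale $M_t=\exp\bigl(u\log S_t+w\int_0^tX_s^2\,ds+\phi_t+\langle g_t,\boldsymbol{\Psi}_tg_t\rangle_{L^2}\bigr)$, apply It\^o's formula, and kill the drift via the operator Riccati equation $\dot{\boldsymbol{\Psi}}_t=2\boldsymbol{\Psi}_t\dot{\boldsymbol{\Sigma}}_t\boldsymbol{\Psi}_t$ together with $\dot\phi_t=\Tr(\boldsymbol{\Psi}_t\dot{\boldsymbol{\Sigma}}_t)$; the terminal identification $g_T=0$, $\phi_T=0$ and the verification that the trace term is well defined (rank-one $\dot{\boldsymbol{\Sigma}}_t$) are all as in the paper. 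Two points, however, need repair.

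First, and most seriously, your passage from local martingale to true martingale is not justified: the polynomial moment bound \eqref{eq:momentsX} does \emph{not} give uniform integrability of $M$, since $M$ involves exponentials of quadratic functionals of a Gaussian-driven process ($\exp(w\int X^2)$ and the stochastic exponential of $u\int X\,dB$), for which finiteness of all polynomial moments of $X$ is far from sufficient. The paper's argument is genuinely different here: it first restricts to \emph{real} $u\in[0,1]$, $w\le 0$, where Lemma~\ref{eq:Psiwelldefined} shows $\boldsymbol{\Psi}_t$ is symmetric nonpositive, hence $\langle g_t,\boldsymbol{\Psi}_tg_t\rangle_{L^2}\le 0$ and $\phi_t\le 0$, so that $|M_t|\le S_t^u\le N_t$ for an explicit exponential true martingale $N$; domination by a martingale then upgrades the local martingale to a true one, and the complex case $0\le\Re(u)\le1$, $\Re(w)\le0$ is recovered by analytic continuation in $(u,w)$. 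You never mention this restriction-plus-continuation step, and without it (or an equivalent Novikov-type control) the proof is incomplete.

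Second, a smaller but real error: for $\kappa\neq 0$ the adjusted conditional mean is \emph{not} a martingale in $t$. From \eqref{eq:processg_quadratic} one has $d\bar g_t(s)=K(s,t)\kappa X_t\,dt+K(s,t)\nu\,dW_t$, and it is precisely this drift that produces the $\kappa$ contribution to $b=\kappa+\rho\nu u$ in \eqref{eq:ab} (via the boundary identity \eqref{eq:Psi_on_boudary}). Writing $dg_t(s)=\nu\tilde K(s,t)\,dW_t$ as you propose would either lose this term or force you to recast everything in terms of the resolvent of $\kappa K$, in which case $\boldsymbol{\Psi}_t$ would no longer have the stated form \eqref{def:riccati_operator} with $(\id-b\boldsymbol{K})^{-1}$; the reconciliation is doable but is not carried out in your sketch.
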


\begin{proof}
	We refer to Appendix~\ref{A:proofvolterra}.
\end{proof}

{The following  remark establishes the link between $\phi$ and the Fredholm determinant. 
	\begin{remark}\label{rmk:det}  Assume $u,w$ are real. 
		We recall the definition 
$$\Phi_t =\id - 2 \boldsymbol{\tilde{\Sigma}}_t a, \quad t\leq T,$$
and that $\boldsymbol{\tilde{\Sigma}}_t$ is an integral operator of trace class with continuous kernel by virtue of Lemma~\ref{L:Kt} below
so that  the determinant $\det(\Phi_t)$ is well defined and non-zero  by the invertibility of 
$(\id- 2 \boldsymbol{\tilde{\Sigma}}_t a )$, see Lemma~\ref{eq:Psiwelldefined} and \citet[Theorem 3.9]{simon1977notes}.
We set
\begin{align}\label{eq:detphi}
\phi_t = \log(\det(\Phi_t)^{-1/2})=-\frac 1 2 \log(\det(\Phi_t)).
\end{align}
Differentiation using the logarithmic derivative of the Fredholm's determinant (see \cite[Chap IV, p.158 (1.3)]{gohberg1978introduction}) and \eqref{def:C_tilde} yields 
\begin{align}
\dot {\phi}_t  =  \Tr\left(a\left(\id - 2 \boldsymbol{\tilde{\Sigma}}_t a \right)^{-1}{\boldsymbol{\dot{\tilde{\Sigma}}}_t} \right)= \Tr\left( a \left(\id - 2 \boldsymbol{\tilde{\Sigma}}_t a \right)^{-1}(\id -b\boldsymbol{K})^{-1}{\boldsymbol{\dot{{\Sigma}}}_t}(\id -b\boldsymbol{K}^*)^{-1}\right).
\end{align}
Finally, using \eqref{def:riccati_operator} and the identity $\Tr(\boldsymbol{F}\boldsymbol{G})=\Tr(\boldsymbol{G}\boldsymbol{F})$, we obtain
\begin{align}\label{eq:riccati_phi}
\dot {\phi}_t  =  \Tr(\boldsymbol{\Psi}_t{\boldsymbol{\dot{{\Sigma}}}_t}).
\end{align}
When $u,w$ are complex numbers, the definition of \eqref{eq:detphi} requires the use of several branches of the complex logarithm. For numerical implementation, to prevent complex discontinuities, one should either use \eqref{eq:detphi} with multiple branches or stick with the discretization of expression \eqref{eq:phii}. We refer to section~\ref{S:numeric} for the numerical implementation.
\end{remark}	
}

Finally, for  $K(t,s)=\bm 1_{s<t}$ and an input curve of the form 
\begin{align}\label{eq:paramg0}
g_0(t)= X_0 + \theta t, \quad t\geq 0,
\end{align}
for some $X_0,\theta \in \R$,
 one  recovers  from Theorem~\ref{T:volterra} the well-known closed form expressions of \citet{stein1991stock} and \citet{schobel1999stochastic}, and  that of \citet{heston1993closed} when $\theta= 0$.
 
 \begin{corollary}
 	Assume that $K(t,s)=\bm 1_{s<t}$ and that $g_0$  is of the form \eqref{eq:paramg0}, then, the expression \eqref{eq:charvolterra} reduces to 
 	\begin{align}\label{eq:charvolterrastandard}
 	\!\!\!\!\!\!\!\!\!\!\!	\E\left[ \exp\left(u \log \frac{S_T}{S_t} + w \int_t^T X_s^2 ds  \right) \! \!\Mid \mathcal \! \mathcal F_t\right] = {\exp\left( A(t)+ B(t) X_t +  C(t)X_t^2 \right)}
 	\end{align}
 	where $A,B,C$ solve the following system of (Backward) Riccati equations 
 	\begin{align*}
 	\dot A &= -\theta B - \frac 1 2 \nu^2 B^2 - \nu^2 C, \quad &&A(T)=0,   \\
 		\dot B &= -2 \theta C - (\kappa+\rho\nu u+2 {\nu^2} C)B, \quad &&B(T)=0, \\
 	\dot C &= -2\nu^2 C^2 - 2(\kappa+\rho\nu u)C - w- \frac 1 2 (u^2-u), \quad &&C(T)=0.
 	\end{align*}
 	In particular, $(A,B,C)$ can be computed in closed form as in \citet[Equations (43)-(44)-(45)]{lord2006rotation}.
 \end{corollary}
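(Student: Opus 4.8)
The plan is to specialize Theorem~\ref{T:volterra} to the convolution kernel $K(t,s) = \bm 1_{s<t}$ and the affine input curve $g_0(t) = X_0 + \theta t$, and to recognize that the resulting operator-valued quantities reduce to finite-dimensional (actually scalar) objects because the dynamics of $X$ become Markovian. First I would observe that with $K(t,s)=\bm 1_{s<t}$, equation \eqref{eq:steinsteinX} becomes $X_t = X_0 + \theta t + \int_0^t(\kappa X_s\,ds + \nu\,dW_s)$, i.e. the classical Ornstein--Uhlenbeck-type SDE $dX_t = (\theta + \kappa X_t)\,dt + \nu\,dW_t$, so $X$ is a genuine Markov process. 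Consequently the conditional expectation in \eqref{eq:charvolterra} is a deterministic function of $(t, X_t)$ alone, and the structure of the exponent $\phi_t + \langle g_t, \boldsymbol\Psi_t g_t\rangle_{L^2}$ must collapse to a quadratic polynomial in $X_t$: this motivates the ansatz \eqref{eq:charvolterrastandard} with $A, B, C$ deterministic.

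The cleanest route is then to verify the ansatz directly rather than to unwind the operator expressions: assume $\E[\exp(u\log(S_T/S_t) + w\int_t^T X_s^2\,ds)\mid\mathcal F_t] = \exp(A(t) + B(t)X_t + C(t)X_t^2)$ and derive the Riccati system via the martingale property. Writing $M_t$ for the left-hand side times $\exp(u\log(S_t/S_0))$ — more precisely, setting $Y_t := \exp(u\log S_t + w\int_0^t X_s^2\,ds)\exp(A(t) + B(t)X_t + C(t)X_t^2)$ so that $Y$ is a martingale — I would apply It\^o's formula using $d\log S_t = -\tfrac12 X_t^2\,dt + X_t\,dB_t$ and $dX_t = (\theta+\kappa X_t)\,dt + \nu\,dW_t$ with $d\langle B, W\rangle_t = \rho\,dt$, collect the finite-variation terms, and set their sum to zero. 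Matching coefficients of $1$, $X_t$, and $X_t^2$ produces exactly the three ODEs stated, with terminal conditions $A(T)=B(T)=C(T)=0$ coming from $M_T$ being the identity at $t=T$. The coefficient of $X_t^2$ gives $\dot C = -2\nu^2 C^2 - 2(\kappa+\rho\nu u)C - w - \tfrac12(u^2-u)$; the coefficient of $X_t$ gives $\dot B = -2\theta C - (\kappa + \rho\nu u + 2\nu^2 C)B$; and the constant term gives $\dot A = -\theta B - \tfrac12\nu^2 B^2 - \nu^2 C$.

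For the final sentence — closed-form solvability — I would note that the $C$-equation is a scalar Riccati ODE with constant coefficients, hence integrable in closed form via the standard substitution $C = -\dot{\psi}/(2\nu^2\psi)$ reducing it to a linear second-order ODE with constant coefficients; once $C$ is known in closed form, $B$ solves a linear first-order ODE (explicitly, via an integrating factor involving $\exp(\int(\kappa+\rho\nu u + 2\nu^2 C))$), and $A$ is then obtained by direct integration of $-\theta B - \tfrac12\nu^2 B^2 - \nu^2 C$. This is precisely the computation carried out in \citet[Equations (43)-(45)]{lord2006rotation}, so I would simply refer to it. Alternatively, one can derive the corollary by directly evaluating the operators in Theorem~\ref{T:volterra}: since $\boldsymbol\Sigma_t$ has kernel $\Sigma_t(s,v) = \nu^2\int_t^T \bm 1_{s<z}\bm 1_{v<z}\,dz = \nu^2(T - s\vee v)$ on $[t,T]^2$, the operator $\boldsymbol{\tilde\Sigma}_t$ is built from rank-structured kernels, and $\boldsymbol\Psi_t$ applied to the affine function $g_t$ — which here is itself affine in $X_t$ — yields a scalar multiple recovering $C(t)$, the linear part recovering $B(t)$, and $\phi_t$ recovering $A(t)$.

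\textbf{Main obstacle.} The main technical point is not the It\^o computation, which is routine, but justifying that the ansatz is legitimate — i.e. that the conditional expectation genuinely has the quadratic-exponential form with \emph{finite}, well-defined $A, B, C$ on all of $[0,T]$ under the stated parameter restrictions $0\le\Re(u)\le1$, $\Re(w)\le0$. This requires knowing that the scalar Riccati equation for $C$ does not blow up before time $T$; here one leans on Theorem~\ref{T:volterra}, which already guarantees the representation \eqref{eq:charvolterra} is valid and finite, so the collapse to \eqref{eq:charvolterrastandard} inherits well-posedness for free. Thus the corollary is genuinely a specialization, and the only real work is the coefficient-matching bookkeeping plus citing \citet{lord2006rotation} for the explicit solution.
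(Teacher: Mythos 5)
Your proposal is correct, and the coefficient matching checks out exactly (I verified the It\^o computation: the $X_t^2$, $X_t$ and constant coefficients of the drift of $Y_t=\exp(u\log S_t+w\int_0^t X_s^2\,ds+A+BX_t+CX_t^2)$ reproduce the three stated ODEs with the stated terminal conditions). However, your primary route differs from the paper's. The paper gives a sketch that stays entirely inside the operator framework: it computes $g_t(s)=\bm 1_{t\le s}(X_t+(s-t)\theta)$ explicitly, expands $\langle g_t,\boldsymbol{\Psi}_t g_t\rangle_{L^2}=\tilde A(t)+B(t)X_t+C(t)X_t^2$ with $B(t)=2\theta\langle \bm 1_{t\le\cdot}(\cdot-t),\boldsymbol{\Psi}_t\bm 1_{t\le\cdot}\rangle_{L^2}$ and $C(t)=\langle\bm 1_{t\le\cdot},\boldsymbol{\Psi}_t\bm 1_{t\le\cdot}\rangle_{L^2}$, and then derives the scalar Riccati system from the operator Riccati equation $\dot{\boldsymbol{\Psi}}_t=2\boldsymbol{\Psi}_t\dot{\boldsymbol{\Sigma}}_t\boldsymbol{\Psi}_t$ of Lemma~\ref{L:Psi} together with \eqref{eq:riccati_phi} (this is essentially your "alternative" route mentioned in passing). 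Your main route --- ansatz plus martingale verification by coefficient matching, exploiting that $X$ is Markovian here --- is the classical Sch\"obel--Zhu derivation: it is more elementary and self-contained, but it shifts the burden onto justifying the ansatz, i.e. global existence of $(A,B,C)$ on $[0,T]$ and a verification argument. You handle this by appealing back to Theorem~\ref{T:volterra}; to make that airtight you would still need to identify $A,B,C$ with the operator functionals above (so that finiteness is automatic), at which point you have essentially reconstructed the paper's argument. The paper's route buys the identification for free and serves the corollary's real purpose, namely a consistency check that the general operator formula collapses to the known closed forms of Stein--Stein, Sch\"obel--Zhu and Lord--Kahl; your route buys a cleaner, machinery-free derivation of the ODE system itself. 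Both are valid at the level of rigor of the paper's own "sketch of proof."
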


\begin{proof}[Sketch of proof]
	The characteristic function is given by \eqref{eq:charvolterra}. Assume that  $K(t,s)=\bm 1_{s<t}$ and $g_0$ is as in \eqref{eq:paramg0}. Then, 
	$$ X_s = X_t + (s-t)\theta + \int_t^s \kappa X_udu + \int_t^s \nu dW_u, \quad s\geq t, $$
	so that taking conditional expectation yields
	$$  g_t(s)=\bm 1_{t\leq s} \left(X_t + (s-t)\theta\right).$$
	It follows that 
	\begin{align}
	\langle g_t, \boldsymbol{\Psi}_{t} g_t \rangle_{L^2}=  \tilde A(t) +  B(t)  X_t+  C(t) X_t^2
	\end{align}
	with 
	$$\tilde A(t)=\theta^2 \langle   \bm 1_{t\leq \cdot}(\cdot-t), \boldsymbol{\Psi}_{t}  \bm 1_{t\leq \cdot}(\cdot-t) \rangle_{L^2},\;  B(t) = 2\theta \langle   \bm 1_{t\leq \cdot}(\cdot-t), \boldsymbol{\Psi}_{t}  \bm 1_{t\leq \cdot} \rangle_{L^2},  \;  C(t)=  \langle \bm 1_{t\leq \cdot} , \boldsymbol{\Psi}_{t} \bm 1_{t\leq \cdot} \rangle. $$
Combined with \eqref{eq:detphi} and \eqref{eq:riccati_phi} below, we obtain \eqref{eq:charvolterrastandard}  with  $A$ such that $A_T=0$ and 
	$$ \dot A(t)=  \dot {\tilde A}(t) + \Tr(\boldsymbol{\Psi}_t{\boldsymbol{\dot{{\Sigma}}}_t}) $$ 
	with  $\Tr$ the trace of an operator (see Appendix~\ref{A:trace} below) and 
	$$ \dot{{\Sigma}}_t(s,u)= -\nu^2 \bm 1_{t\leq s \wedge u}. $$
	Using the operator Riccati equation satisfied by $t \mapsto \boldsymbol \Psi_t$, see Lemma~\ref{L:Psi} below, and  straightforward computations as in \citet[Corollary 5.14]{abi2020markowitz} lead to the claimed system of Riccati equations for  $(A,B,C)$.
\end{proof}

\section{Numerical illustration}
In this section, we make use of the analytic expression for the characteristic function in \eqref{eq:introchar}  to price options.  We first present an approximation of the formula  \eqref{eq:introchar} using closed form expressions obtained from a natural discretization of the operators. Throughout this section, we consider the case $t=0$ and we fix   a Volterra kernel $K$, i.e.~$K(t,s)=0$ if $s\geq t$, as in Section~\ref{S:volterra}.

\subsection{A straightforward approximation by closed form expressions}\label{S:numeric} 
 The expression  \eqref{eq:introchar} lends itself to approximation by closed form solutions by a simple discretization of the operator $\boldsymbol{\Psi}_0$ given by \eqref{def:riccati_operator} \textit{à la} \cite{fredholm1903}.  Fix $n \in \mathbb N$ and let $t_i=iT/n$, $i=0,1,\ldots,n$ be a partition of $[0,T]$. Discretizing the  $\star$-product given in \eqref{eq:starproduct} yields the following approximation for $\boldsymbol{\Psi}_0$ by the $n\times n$ matrix:
\begin{align*}
\Psi_0^n = a\left(I_n - b (K^n)^\top \right)^{-1} \left(I_n - 2 \frac{aT}{n} \tilde \Sigma^n\right)^{-1} \left( I_n - b K^n \right)^{-1},
\end{align*}
where $I_n$ is the $n\times n$ identity matrix, $K^n$ is the lower triangular matrix with components 
\begin{align}\label{eq:Kn}
K^n_{ij}= \bm 1_{j\leq i-1}\int_{t_{j-1}}^{t_j} K(t_{i-1},s)ds  , \quad 1 \leq  i,j\leq n, 
\end{align} 
and 
$$  \tilde \Sigma^n =  \left(I_n - b K^n \right)^{-1}\Sigma^n \left(I_n - b (K^n)^{\top} \right)^{-1}   $$
with $\Sigma^n$ the $n \times n$ discretized covariance matrix, recall \eqref{eq:sigmakernel}, given by
\begin{align}\label{eq:Sigman}
 \Sigma^n_{ij}=\nu^2 \int_0^T K(t_{i-1},s)K(t_{j-1},s)ds, \quad 1\leq i,j\leq n.
\end{align}  
Defining the $n$-dimensional vector $g_n=(g_0(t_0),\ldots, g_0(t_{n-1}))^\top$, the discretization of the inner product $\langle \cdot, \cdot\rangle_{L^2}$ leads to the approximation 
\begin{align}\label{eq:approxchar}
\!\!\!\!\!\!\!\!	\E\left[ \exp\left(u \log {S_T} + w \int_0^T X_s^2 ds  \right) \right] \approx \frac{\exp\left(u\log S_0+ \frac{T}{n} g_n^\top {\Psi}_0^n g_n  \right)}{\det(\Phi^n_0)^{1/2}}
\end{align}
with $\Phi^n_0=\left( I_n - 2 a \frac{T}{n} \tilde \Sigma^n  \right)$.

{\begin{remark}
		Recalling Remark~\ref{rmk:det}, one needs to be careful with the numerical implementation of the square root of the determinant that appears in equation \eqref{eq:approxchar} to avoid complex discontinuities,  either by switching the sign of the determinant each time it crosses the axis of negative real numbers or by discretizing \eqref{eq:phii} which would require the computation of $\boldsymbol{\Psi}_t$ for several values of $t$ but has the advantage of being analytic on the whole domain. We refer to \cite{mayerhofer2019reforming} for more details for finite-dimensional Wishart distributions. 
\end{remark}}

\begin{remark}\label{R:nystrom}
Depending on the smoothness of the kernel, other quadrature rules  might be more efficient for the choice of the discretization of the operator and the approximation of the Fredholm determinant based on the so-called Nystr{\"o}m method, see for instance \citet{bornemann2009numerical,bornemann2010numerical,corlay2010nystr,kang2003nystrom}.
\end{remark}

\begin{remark}
	{For the case $u=0$ and $\kappa = 0$, the previous approximation formulas agree with the ones derived in \cite[Section 2.3]{AJ2019laplace} where  a numerical  illustration for the integrated squared fractional Brownian motion is  provided.}
\end{remark}

 \subsection{Option pricing in the fractional Stein--Stein model}\label{S:pricing}
 In this section, we illustrate the applicability of our results on the following  fractional Stein--Stein model based on the Riemann--Liouville fractional Brownian motion with the Volterra convolution kernel $K(t,s)=\bm{1}_{s< t}  (t-s)^{H-1/2}/{\Gamma(H+1/2)}$:
 \begin{align}
 d S_t &= S_t X_t  dB_t, \quad S_0>0, \label{eq:steinsteinS2}\\
 X_t &= g_0(t) +  \frac{\kappa}{\Gamma(H+1/2)}\int_0^t (t-s)^{H-1/2}   X_sds  + \frac{\nu}{\Gamma(H+1/2)}\int_0^t (t-s)^{H-1/2}   dW_s,  \label{eq:steinsteinX2} 
 \end{align}
 with $B= \rho W + \sqrt{1-\rho^2} W^{\perp}$, for $\rho \in [-1,1]$,  $\kappa, \nu \in \R$ and a Hurst index $H\in(0,1)$.  For illustration purposes we will consider that the input curve $g_0$, which can be used in general to fit at-the-money curves observed in the market, has the following parametric form\footnote{In conventional Markovian stochastic volatility models, the input curve $g_0$ is usually in the parametric form \eqref{g0param}.  	However, if one is interested in a practical implementation, then more general forms of  $g_0$ (non-parametric) would allow more flexibility (by making $\theta$ time dependent for instance). The advantage is that $g_0$   can be estimated from the market to match certain term structures today (e.g. term structure of forward variance, etc\ldots).  	For illustration purposes here, and since a comparison with the standard Stein--Stein model is given, we restrict to such parametric forms of $g_0$.}
 \begin{align}\label{g0param}
 g_0(t)&= X_0 +  \frac{1}{\Gamma(H+1/2)}\int_0^t (t-s)^{H-1/2} \theta ds =X_0 +  \theta  \frac{t^{H+1/2}}{\Gamma(H+1/2)(H+1/2)}.
 \end{align}

 \begin{remark}\label{R:fbm}
It would have also been possible to take  instead of the fractional Riemman--Liouville Brownian motion the true fractional Brownian motion by considering 
$$ X_t = g_0(t)+  \frac{\nu}{\Gamma(H+1/2)}\int_0^t (t-s)^{H-1/2}  \, {}_2F_1\left(H-1/2,1/2-H;H+1/2,1-\frac t s \right)  dW_s, $$
where   ${}_{2}F_{1}$ is the Gaussian hypergeometric function.
 \end{remark}
 
 Taking $H<1/2$  allows one to reproduce the stylized facts observed in the market as in Figure~\ref{fig:skew intro}. Indeed, the simulated sample paths of the  instantaneous variance process $X^2$ with $H=0.1$ in Figure~\ref{fig:simulated} has the  same regularity as the realized variance of the S\&P in Figure~\ref{fig:skew intro}-(a). In the case $H<1/2$, we  refer to the model as the  rough Stein--Stein model. 
 \begin{center}
 	\includegraphics[scale=0.6]{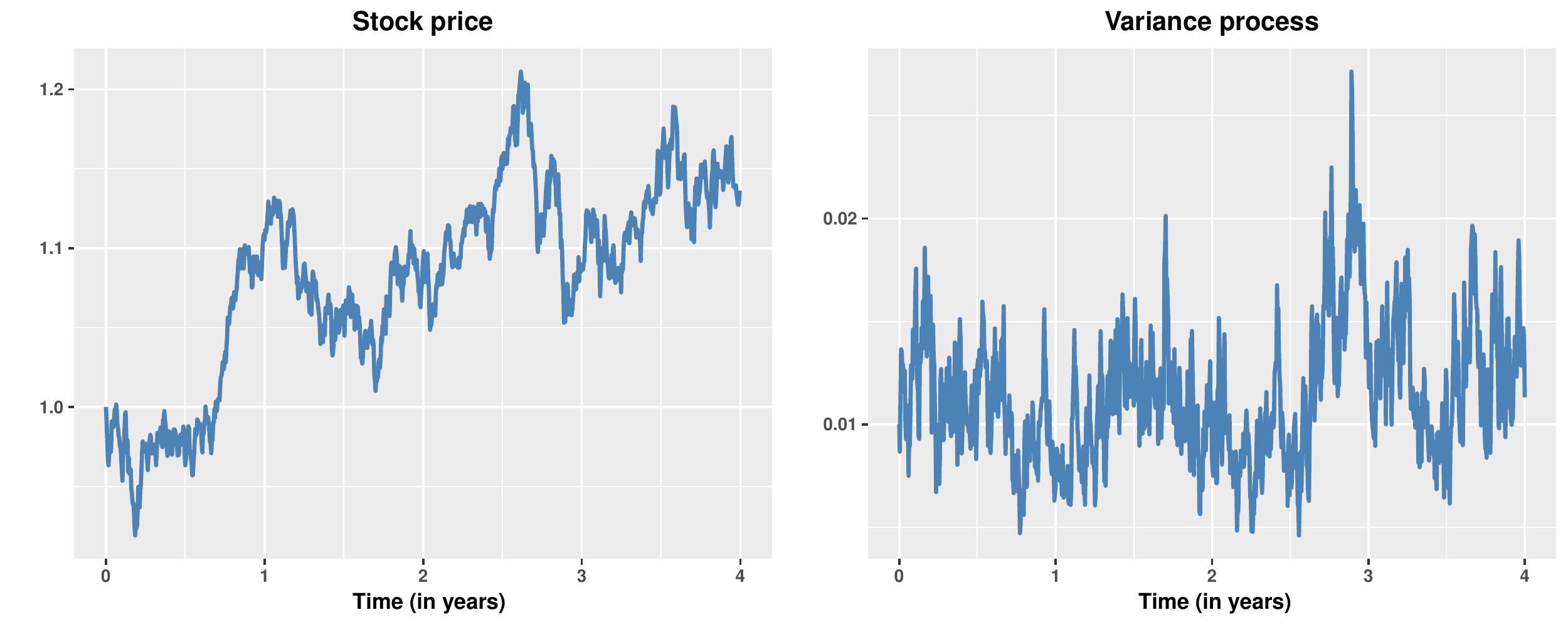}
 	\captionof{figure}{One simulated sample path of the stock price $S$ and the instantaneous variance process $X^2$ in the rough Stein--Stein model with parameters: $X_0=0,1$, $\kappa = 0$, $\theta=0.01$, $\nu=0.02$, $\rho=-0.7$ and $H=0.1$.}
 	\label{fig:simulated}
 \end{center}

We now move to pricing. The expression \eqref{eq:introchar} for the joint characteristic function  allows one to recover the joint density $p_T(x,y)$ of $\left(\log S_T,\int_0^T X_s^2 ds\right)$ by Fourier inversion:
\begin{align}
p_T(x,y)=\frac 1 {2\pi} \int_{\R^2} e^{-i(z_1x+z_2y)} \E\left[ \exp\left(iz_1\log S_T+iz_2\int_0^T X_s^2 ds\right) \right]dz_1 dz_2,
\end{align}
but also to price  derivatives on the stock price and the integrated variance by Fourier inversion techniques, see \citet{carr1999option,fang2009novel,lewis2001simple} among many others. In the sequel we will make use of the cosine method of \citet{fang2009novel}  to price European call options on the stock $S$ combined with our approximation formulae of Sections \ref{S:numeric}. We start by observing that the kernel $\Sigma_0$ is given in the following closed form 
\begin{align}
\Sigma_0(s,u)&=\frac{\nu^2}{\Gamma(H+1/2)^2}\int_0^{s\wedge u} (s-z)^{H-1/2}(u-z)^{H-1/2}dz\\
&=\frac{\nu^2}{\Gamma(\alpha)\Gamma(1+\alpha)}\frac{s^{\alpha}}{u^{1-\alpha}} \; {}_{2}F_{1}\left( 1, 1-\alpha; 1+\alpha ; \frac s u\right), \quad s\leq u, 
\end{align}
where $\alpha=H+1/2$ and  ${}_{2}F_{1}$ is the Gaussian hypergeometric function,
  see for instance \citet[page 71]{malyarenko2012invariant}.\footnote{Note that in the case of Remark~\ref{R:fbm}, the expression for the covariance function simplifies to $\Sigma_0(s,u)=\frac {\nu^2} 2 (s^{2H}+u^{2H}-|s-u|^{2H})$.}  {Fix $n \in \mathbb N$ and a  given partition $0=t_0<t_1 <\ldots <t_n=T$.  It follows that the $n\times n$ matrices \eqref{eq:Kn}-\eqref{eq:Sigman}  can be computed in closed form:
  \begin{align}
  K^n_{ij}&= \bm 1_{j\leq i-1} \frac{1}{\Gamma(1+\alpha)} \left[ (t_{i-1}-t_{j-1})^{\alpha} - (t_{i-1}-t_{j})^{\alpha} \right], \quad 1 \leq  i,j\leq n,  \\
  \Sigma_{ij}^n & = \frac{\nu^2}{\Gamma(\alpha)\Gamma(1+\alpha)}\frac{{t_{i-1}}^{\alpha}}{t_{j-1}^{1-\alpha}} \; {}_{2}F_{1}\left( 1, 1-\alpha; 1+\alpha ; \frac {t_{i-1}} {t_{j-1}}\right), \quad \Sigma_{ji}^n = \Sigma_{ij}^n,  \quad 1\leq i \leq j \leq n,
    \end{align}	
  with the convention that $0/0=0$. We note that $K^n$ is lower triangular with zeros on the diagonal and that the symmetric matrix $\Sigma^n$ has zeros on its first row and first column. The final ingredient to  compute \eqref{eq:approxchar} is the vector $g_n$ whose elements are given by:
 $$g_n^i = g_0(t_{i-1}) = X_0 +  \theta  \frac{{t_{i-1}}^{\alpha}}{\Gamma(1+\alpha)}, \quad 1\leq i \leq n.$$
  }

As a sanity check, we visualize on Figure~\ref{fig:convergence} the convergence of the approximation methods on the implied volatility for $H=0.2$ and $H=0.5$ with the uniform partition $t_i = iT/n$. The benchmark is computed for $H=0.5$ via the cosine method with   the closed form expressions for the characteristic function of the conventional Stein--Stein model, see \citet{lord2006rotation}; and for $H=0.2$ using Monte Carlo simulation. The smaller the maturity the faster the convergence.  Other discretization rules   might turn out to be more efficient and would require less points to achieve the same accuracy, which makes the implementation even faster, recall Remark~\ref{R:nystrom}. The main challenge for applying such methods is the singularity of the kernel at $s=t$ when $H<1/2$ and is left for future research.

\begin{center}
	\includegraphics[scale=0.65]{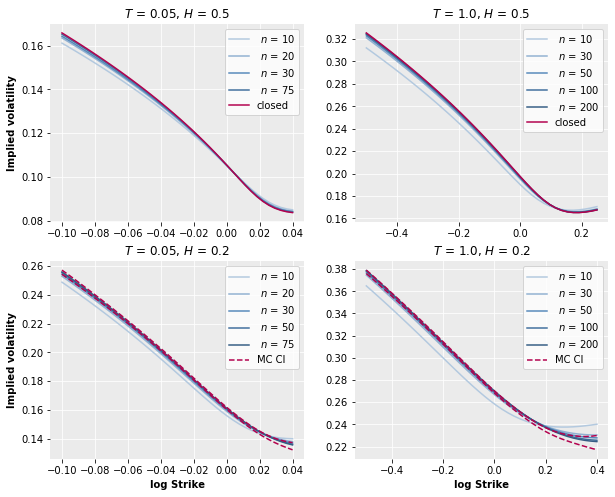}
	\rule{35em}{0.5pt}
	\captionof{figure}{{Convergence of the implied volatility slices for short ($T=0.05$ year) and long maturities ($T=1$ year)  of the operator discretization of  Section~\ref{S:numeric}  towards: (i) the explicit solution of the conventional Stein--Stein model ($H=0.5$ upper graphs); (ii) the $95\%$ Monte-Carlo confidence intervals ($H=0.2$ lower graphs). The parameters are $X_0=\theta = 0.1$, $\kappa=0$, $\nu=0.25$ and $\rho=-0.7$.}}
	\label{fig:convergence}
\end{center}

Going back to real market data, we calibrate the fractional Stein--Stein model to

\begin{enumerate}
	\item 
	the at-the-money skew of Figure \ref{fig:skew intro}-(b). Keeping the parameters
	$X_0=0.44$, $\theta=0.3$, $\kappa = 0$ fixed, the calibrated parameters are given by 
	\begin{align}\label{eq:calibrated skew rough}
	\hat \nu =0.5231458 , \quad \hat \rho= -0.9436174 \quad \mbox{and} \quad \hat H=  0.2234273.
	\end{align}
	This  power-law behaviour of  the at-the-money skew observed on the market is perfectly captured by the fractional Stein--Stein model as   illustrated on Figure \ref{fig:skew calibrated} with only three parameters. 
	\item
	the implied volatility surface of the S\&P accross several maturities for in Figure~\ref{fig:ivcalibrated}.
\end{enumerate}  
Both calibration lead to $\hat H< 0.5 $ indicating that the rough regime of the fractional Stein--Stein model is   coherent with the observations on the market.

\begin{center}
	\includegraphics[scale=0.5]{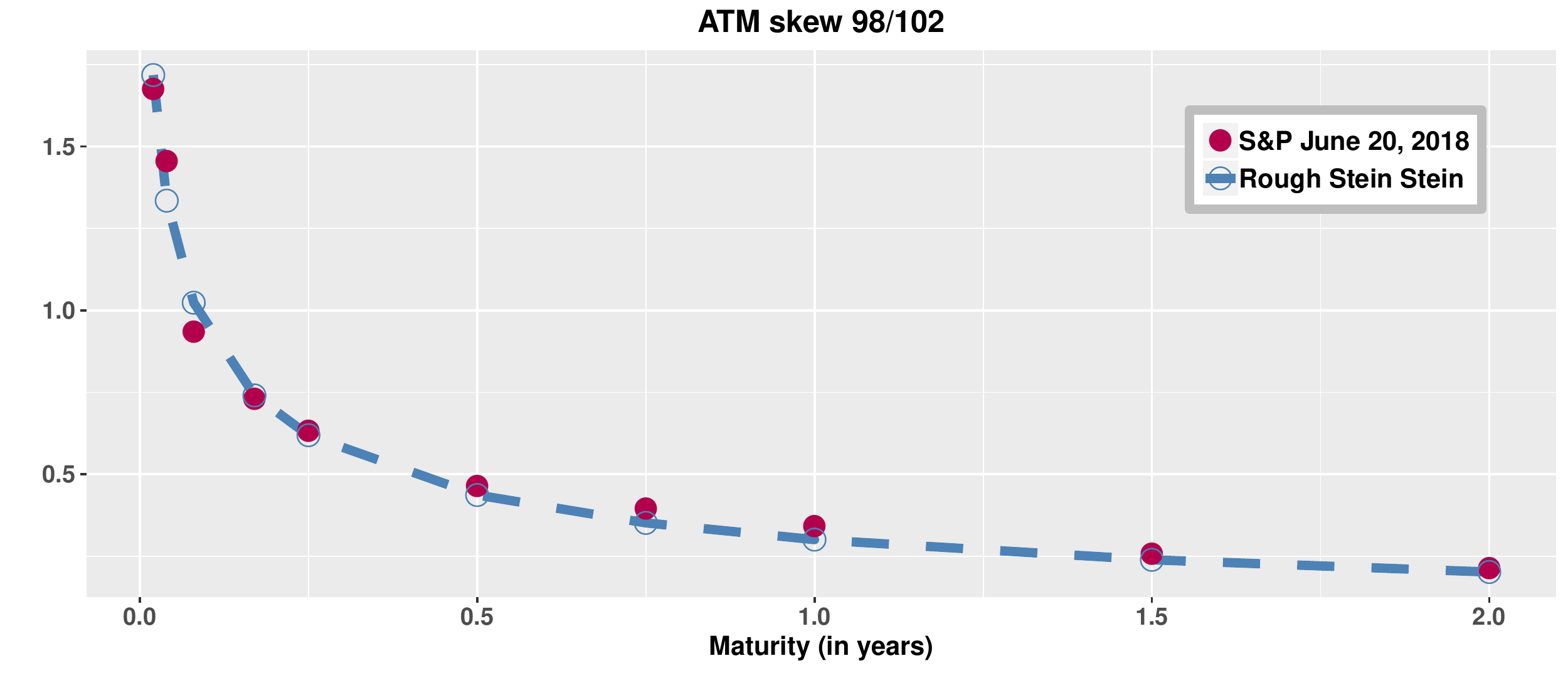}
	\rule{35em}{0.5pt}
	\captionof{figure}{{Term structure of the at-the-money skew for the S\&P index on June 20, 2018 (red dots) and for the rough Stein--Stein model with calibrated parameters \eqref{eq:calibrated skew rough} (blue circles with dashed line).} }
	\label{fig:skew calibrated}
\end{center}

\begin{center}
	\includegraphics[scale=0.5]{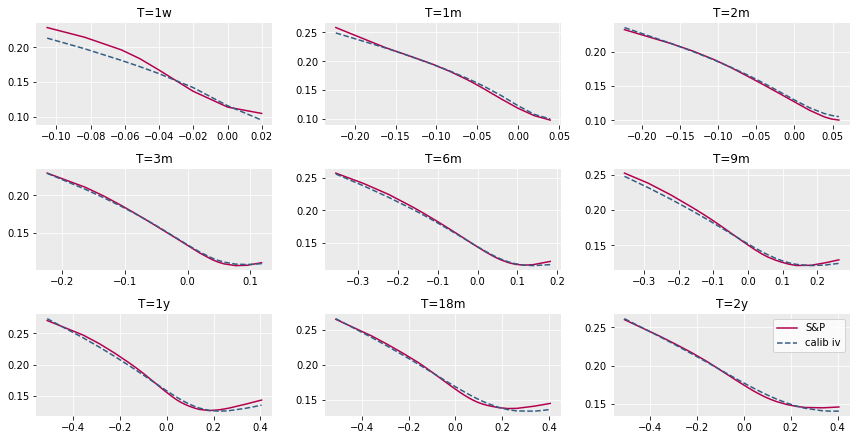}
	\rule{35em}{0.5pt}
	\captionof{figure}{{The implied volatility surface of the S\&P index (red) and the calibrated fractional Stein--Stein model (blue) with  parameters:  $ \hat X_0= 0.113,   \; \hat \theta=-0.044,   \;  \hat \kappa = -8.9 \e -5,\; 
			\hat \nu =  0.176, \; \hat \rho= -0.704, \; \mbox{and} \; \hat H=  0.279.$}}
	\label{fig:ivcalibrated}
\end{center}

\appendix


\section{Trace, determinants and resolvents}

\subsection{Trace and determinants}\label{A:trace}
{In this  section we recall classical results on operator theory in Hilbert spaces regarding mainly their trace and their determinant. For further details we refer to \citet{gohberg1978introduction, gohberg2012traces, simon1977notes, simon2005trace, smithies1958integral}, and also \citet[Section 2 and 3]{bornemann2009numerical}. 
	Let $\boldsymbol{A}$ be a linear compact operator acting on $L^2([0,T],\mathbb C)$. Then,  the operator $\boldsymbol{A}$ has a countable spectrum\footnote{We recall that the spectrum $\mbox{sp}(\boldsymbol{A})$ is defined as the set of points $\lambda \in \mathbb C$ for which there does not exist a bounded inverse operator $(\lambda \mbox{id} - \boldsymbol{A})^{-1}$.} denoted by $\mbox{sp}(\boldsymbol{A})=(\lambda_n(\boldsymbol{A}))_{n\leq N(\boldsymbol{A})}$, where $N(\boldsymbol{A})$ is either a finite integer or infinity. Whenever $\boldsymbol{A}$ is a linear  operator induced by a kernel   $A \in L^2([0,T]^2, \mathbb C)$,  $\boldsymbol{A}$ is a Hilbert--Schmidt operator on $L^2([0,T],\mathbb C)$ into itself and is in particular compact.

The trace and the determinant are two important  functionals  on the space of compact operators. Such quantities are defined for operators  of trace class. A compact operator $\boldsymbol{A}$ is said to be of trace class if the quantity 
	\begin{align}\label{eq:trace}
	\Tr \boldsymbol{A}  = \sum_{n \geq 1} \langle \boldsymbol{A} v_n , v_n  \rangle
	\end{align}
	is finite for a given orthonormal basis  $(v_n)_{n\geq 1}$. It can be shown that the quantity on the right hand side of \eqref{eq:trace} is independent of the choice of the orthonormal basis and will be called the trace of the operator $\boldsymbol{A}$. Furthermore, Lidskii's theorem \citep[Theorem 3.7]{simon2005trace} ensures that   
	\begin{align*}
	\Tr \boldsymbol{A}  = \sum_{n=1}^{N(\boldsymbol{A})} \lambda_n(\boldsymbol{A}).
	\end{align*}  

\begin{remark}
The product of two Hilbert-Schmidt operators $\boldsymbol{K}$ and $\boldsymbol{L}$ is of trace class. If in addition, both $\boldsymbol{K}$ and $\boldsymbol{L}$ are integral operators  on $L^2([0,T])$ induced by $K$ and $L$, then 
\begin{align}\label{eq:traceprod}
\Tr(\boldsymbol{K}\boldsymbol{L}) = \int_0^T (K\star L)(s,s)   ds,
\end{align} 
see \citet[Proposition 3]{brislawn1988kernels}.	
\end{remark}

Furthermore, the equivalence 
	\begin{align*}
\displaystyle \prod_{n\geq 1} (1 +   |\lambda_n|)  < \infty \Leftrightarrow   \sum_{n \geq 1} |\lambda_n| <\infty,
	\end{align*}
	allows one to define a determinant functional for a trace class operator $ \boldsymbol{A}$ by 
	\begin{align}
	\det (\id + z \boldsymbol{A}) =  \displaystyle \prod_{n=1}^{N(\boldsymbol{A})}(1 + z \lambda_n(\boldsymbol{A})),
	\end{align}
	for all $z\in \mathbb C$. If in addition $\boldsymbol{A}$ is an integral operator  induced by a continuous kernel $A$, then one can show that	
	\begin{align}\label{eq:detfredholm}
	\det(\id + z \boldsymbol{A})= \sum_{n \geq 0} \frac {z^n} {n!} \int_0^T \ldots \int_0^T \det \left[ (A(s_i,s_j))_{1\leq i,j\leq n} \right] ds_1\ldots ds_n.
	\end{align}		 
			The determinant \eqref{eq:detfredholm} is named after \citet{fredholm1903} who defined it for the first time for integral operators with continuous kernels. 
		
}

\vspace{1mm}

\vspace{1mm}

\subsection{Resolvents}\label{A:res}
For a kernel $K \in L^2([0,T]^2,\mathbb C)$, we define its resolvent $R_T \in L^2([0,T]^2,\mathbb C)$ by the unique solution to 
\begin{align}\label{eq:resolventeqkernel}
R_T = K + K \star R_T, \quad  \quad  K \star R_T =  R_T \star K.
\end{align} 
In terms of integral operators, this translates into 
\begin{align}\label{eq:resolventeqop0}
\boldsymbol{R}_T =  \boldsymbol{K} + \boldsymbol{K}\boldsymbol{R}_T, \quad \boldsymbol{K}\boldsymbol{R}_T=\boldsymbol{R}_T\boldsymbol{K}.
\end{align}
In particular, if $K$ admits a resolvent,  $({\id}-\boldsymbol{K})$ is invertible and
\begin{align}\label{eq:integralresop}
({\id}-\boldsymbol{K})^{-1}=\id+\boldsymbol{R}_T.
\end{align}


{\begin{lemma}\label{L:res}
	Any $K$ as in Definition~\ref{D:kernelvolterra} admits a resolvent kernel $R_T$ which is again a    Volterra kernel and satisfies~\eqref{D:kernelvolterra}.
\end{lemma}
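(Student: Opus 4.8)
The plan is to construct the resolvent explicitly as the Neumann series $R_T = \sum_{n \ge 1} K^{\star n}$, where $K^{\star 1} := K$ and $K^{\star(n+1)} := K \star K^{\star n}$ (each well defined in $L^2([0,T]^2,\R)$, and $\star$ is associative since operator composition is), and then to verify the three required properties: that the series converges to an $L^2$ Volterra kernel solving \eqref{eq:resolventeqkernel}, that it obeys the boundedness bound, and that it obeys the $L^2$-continuity condition of Definition~\ref{D:kernelvolterra}.

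The quantitative heart of the argument is the estimate
\[
\int_0^t |K^{\star n}(t,s)|^2 \, ds \ \le \ \frac{B^n\, t^{\,n-1}}{(n-1)!}, \qquad t \le T,\ n \ge 1, \qquad B := \sup_{t \le T}\int_0^T |K(t,s)|^2 \, ds < \infty,
\]
which I would prove by induction on $n$. The case $n=1$ is immediate, since $K$ is Volterra so $\int_0^T = \int_0^t$. For the step, write $K^{\star(n+1)}(t,u) = \int_u^t K(t,z)K^{\star n}(z,u)\,dz$ (the integral truncates to $[u,t]$ by the Volterra property of both factors), apply the Cauchy--Schwarz inequality in $z$, bound $\int_u^t |K(t,z)|^2\,dz \le B$, and then use Fubini's theorem on the triangle $\{0 \le u \le z \le t\}$ together with the induction hypothesis. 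It follows that $\sum_{n\ge1}\big(\sup_{t\le T}\int_0^T |K^{\star n}(t,s)|^2 ds\big)^{1/2} \le \sum_{n\ge1}\big(B(BT)^{n-1}/(n-1)!\big)^{1/2} < \infty$, so the partial sums $S_N := \sum_{n=1}^N K^{\star n}$ are Cauchy in $L^2([0,T]^2,\R)$; I define $R_T$ as their limit, which in particular gives $\sup_{t\le T}\int_0^T |R_T(t,s)|^2\, ds < \infty$ (hence $R_T \in L^2([0,T]^2,\R)$), the boundedness half of Definition~\ref{D:kernelvolterra}.

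Next I would observe that each $K^{\star n}$ vanishes for $s \ge t$ (same induction: $K(t,z)K^{\star n}(z,u)\ne 0$ forces $u < z < t$), so the $L^2$-limit $R_T$ is again a Volterra kernel. To obtain the resolvent identities I would use the submultiplicativity $\|K \star L\|_{L^2([0,T]^2)} \le \|K\|_{L^2([0,T]^2)}\|L\|_{L^2([0,T]^2)}$ (Cauchy--Schwarz) to pass $\star$ through the series: $K \star R_T = \lim_N K \star S_N = \lim_N(S_{N+1}-K) = R_T - K$ in $L^2([0,T]^2)$, and likewise $R_T \star K = R_T - K$ by associativity, so $R_T$ solves \eqref{eq:resolventeqkernel}; the same telescoping applied to a difference $R-R'$ of solutions shows uniqueness in $L^2([0,T]^2,\R)$. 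Finally, for the $L^2$-continuity in the first variable I would exploit the fixed-point form $R_T = K + K \star R_T$: writing $R_T(u+h,\cdot) - R_T(u,\cdot) = [K(u+h,\cdot)-K(u,\cdot)] + \int_0^T [K(u+h,z)-K(u,z)]R_T(z,\cdot)\,dz$, the first bracket has $L^2(ds)$-norm tending to $0$ by \eqref{assumption:K_stein}, and the squared $L^2(ds)$-norm of the second is bounded, via Cauchy--Schwarz in $z$ then integration in $s$, by $\big(\int_0^T|K(u+h,z)-K(u,z)|^2 dz\big)\|R_T\|_{L^2([0,T]^2)}^2$, which also tends to $0$. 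Hence $R_T$ is a Volterra kernel of continuous and bounded type in $L^2$.

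The step I expect to be the main obstacle is the factorial estimate: a naive iteration of the uniform bound $\sup_t\int_0^T|K(t,s)|^2 ds \le B$ only yields the useless growth $B^nT^{n-1}$, so one must retain the $t$-dependence of $\int_0^t |K^{\star n}(t,s)|^2 ds$ and exploit the Volterra truncation of the $\star$-integral via Fubini to gain the $1/(n-1)!$. Everything after that is routine bookkeeping. (Alternatively, the statement is subsumed by the resolvent theory for $L^2$ Volterra kernels in \citet{GLS:90}, but the self-contained argument above is short.)
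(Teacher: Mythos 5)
Your proposal is correct, but it takes a genuinely different route from the paper. The paper does not construct the resolvent: it invokes \citet[Theorem 9.5.5-(ii)]{GLS:90}, which gives existence of $R_T$ as a Volterra kernel of bounded and continuous type, and hence only an $L^1$-type bound $\sup_{t\le T}\int_0^T|R_T(t,s)|\,ds<\infty$; the nontrivial work in the paper is then to upgrade this to the $L^2$ conditions of Definition~\ref{D:kernelvolterra}, which it does via the resolvent equation together with a Jensen-inequality argument against the normalized measure $|R_T(t,z)|\,dz/\int_0^T|R_T(t,z')|\,dz'$. You instead build $R_T$ from scratch as the Neumann series $\sum_{n\ge1}K^{\star n}$, and the factorial estimate $\int_0^t|K^{\star n}(t,s)|^2\,ds\le B^n t^{n-1}/(n-1)!$ (which you prove correctly --- retaining the $t$-dependence and using Fubini on the triangle is indeed the essential point, since the naive uniform iteration gives no decay) makes the whole argument self-contained, yields uniqueness in $L^2([0,T]^2,\R)$ directly, and produces explicit quantitative bounds on $R_T$ in terms of $B$ and $T$. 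The final step --- the $L^2$-continuity in the first variable via $R_T=K+K\star R_T$ and Cauchy--Schwarz --- is essentially identical in both proofs. The only bookkeeping point worth making explicit in your write-up is that you should fix the version of $R_T$ given by the pointwise-in-$t$ limit of $\sum_{n=1}^N K^{\star n}(t,\cdot)$ in $L^2(ds)$ (the partial sums are Cauchy in $L^2(ds)$ uniformly in $t$), so that the bound $\sup_{t\le T}\int_0^T|R_T(t,s)|^2\,ds<\infty$ holds for every $t$ and not merely almost every $t$; with that choice your argument is complete.
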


\begin{proof}
It follows from \eqref{D:kernelvolterra}  that   $K$ is a   Volterra kernel of continuous and bounded type in the terminology of \citet[Definitions 9.5.1 and 9.5.2]{GLS:90}. But since, we are considering kernels on the compact set $[0,T]$, then every kernel of bounded and continuous type is of bounded and uniformly continuous type, see \citet[p.243, paragraph 1]{GLS:90}.  An application of \citet[Theorem 9.5.5-(ii)]{GLS:90}, yields that $K$ admits a resolvent kernel $R_T$ which  is again a Volterra kernel of bounded and continuous type. In particular, 
\begin{align*}
|R_T|_{L^1}  := \sup_{t\leq T} \int_0^T |R_T(t,s)| ds < \infty.  
\end{align*}
It remains to prove that $R_T$ inherits condition \eqref{D:kernelvolterra} from $K$  using the resolvent equation \eqref{eq:resolventeqkernel}. We first show that 
\begin{align}\label{eq:RboundedL2}
\int_0^T \int_0^T |R_T(t,s)|^2 dt ds<\infty. 
\end{align}
An application of Jensen's inequality on the normalized measure $|R_T(t,z)|dz/\int_0^T|R_T(t,z')|dz'$ yields
\begin{align}
 \int_0^T \int_0^T |(R_T\star K)(t,s)|^2 dt ds \leq |R_T|_{L^1}   \sup_{r \leq T}  \int_0^T |K(r,s)|^2 ds \int_0^T \int_0^T  |R_T(t,z)|  dt   dz <\infty. 
\end{align}
Combined with the resolvent equation \eqref{eq:resolventeqkernel} and the first part of \eqref{D:kernelvolterra}, we obtain  \eqref{eq:RboundedL2}. 
Using \eqref{eq:RboundedL2} and the Cauchy-Schwarz inequality we now get 
$$ \int_0^T |(K\star R_T )(t,u)|^2du \leq \sup_{t'\leq T} \int_0^T |K(t',z)|^2 dz \int_0^T \int_0^T |R_T(u,z)|^2 du dz< \infty, \quad t\leq T,$$
which combined with \eqref{D:kernelvolterra} and \eqref{eq:RboundedL2} gives 
\begin{align} 
 \sup_{t\leq T} \int_0^T |R_T(t,s)|^2 ds<\infty,
 \end{align}
 which  shows that $R_T$ satisfies the first condition in \eqref{D:kernelvolterra}.
 Finally, another application of the  Cauchy-Schwarz inequality, for all $t,h\geq 0$, shows that 
\begin{align*}
\int_0^T |(K\star R_T )(t+h,s) - (K\star R_T )(t,s) |^2 ds  \leq  \int_0^T \int_0^T |R_T(u',s)|^2 du' ds \int_0^T |K(t+h,u)-K(t,u)|^2 du  
\end{align*}
where the left hand side goes to $0$ as $h\to 0$ from the second part of \eqref{D:kernelvolterra}. 
Combined with the resolvent equation \eqref{eq:resolventeqkernel}, we can deduce that 
\begin{align*}
\lim_{h\to 0} \int_0^T |R_T(t+h,u)-R_T(t,u)|^2 du = 0,
\end{align*}
which yields the second condition in \eqref{D:kernelvolterra} for $R_T$.
\end{proof}
}

Using the resolvent we can provide the explicit solution to the system \eqref{eq:steinsteinS}--\eqref{eq:steinsteinX}.


{\begin{theorem}\label{T:existenceappendix}
	Fix $T>0$, $g_0\in L^2([0,T],\R)$ and a kernel $K$ as in Definition~\ref{D:kernelvolterra}. Then, there exists a {unique} progressively measurable strong solution $(X,S)$ to \eqref{eq:steinsteinS}--\eqref{eq:steinsteinX} on $[0,T]$ 
	given by 
	\begin{align}
	X_t &=g_0(t)+ \int_0^t R^{\kappa}_T(t,s)g_0(s)ds +\frac{1}{\kappa} \int_0^t R^{\kappa}_T(t,s)\nu dW_s, 	\label{eq:Yclosed} \\
	S_t &= S_0 \exp\left( -\frac{1}{2}\int_0^t X_s^2 ds + \int_0^t X_s dB_s \right),\label{eq:Sclosed}
	\end{align}
	where $R^{\kappa}_T$ is the resolvent kernel of $\kappa K$ with the convention that  $R^{\kappa}_T/\kappa = K$ when $\kappa = 0$. 
	In particular,  \eqref{eq:momentsX} holds. 
\end{theorem}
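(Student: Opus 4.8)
The plan is to establish existence and uniqueness for the linear stochastic Volterra equation \eqref{eq:steinsteinX} first, and then obtain the price process $S$ by exponentiation. The key observation is that \eqref{eq:steinsteinX} is a \emph{linear} Volterra equation in $X$, so it can be solved explicitly via the resolvent of $\kappa K$. Concretely, by Lemma~\ref{L:res}, the kernel $\kappa K$ admits a resolvent kernel $R^\kappa_T$ which is again a Volterra kernel of continuous and bounded type in $L^2$; in particular $R^\kappa_T \in L^2([0,T]^2,\R)$ and $\sup_{t\leq T}\int_0^T |R^\kappa_T(t,s)|^2\,ds <\infty$. I would then \emph{verify directly} that the right-hand side of \eqref{eq:Yclosed} solves \eqref{eq:steinsteinX}: writing $X_t = g_0(t) + (\boldsymbol R^\kappa_T g_0)(t) + \tfrac1\kappa\int_0^t R^\kappa_T(t,s)\nu\,dW_s$, one plugs this into the right-hand side of \eqref{eq:steinsteinX}, uses the stochastic Fubini theorem \citep[Theorem 2.2]{veraar2012stochastic} (justified since $R^\kappa_T, K\in L^2$ and both satisfy the boundedness condition) to interchange $\int_0^T K(t,\cdot)$ with the $dW$ integral, and then invokes the resolvent identity $R^\kappa_T = \kappa K + \kappa K\star R^\kappa_T = \kappa K + R^\kappa_T\star \kappa K$ from \eqref{eq:resolventeqkernel} to collapse the $ds$-terms. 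Uniqueness follows the same way: the difference of two solutions $Y$ satisfies $Y = \kappa\, \boldsymbol K Y$ (a deterministic-kernel fixed-point identity pathwise), and iterating with the $L^2$-boundedness of $K$ — equivalently, the fact that $(\id - \kappa\boldsymbol K)$ is invertible with inverse $\id + \boldsymbol R^\kappa_T$ by \eqref{eq:integralresop} — forces $Y=0$ $dt\otimes\Q$-a.e.

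Next I would establish the moment bound \eqref{eq:momentsX}. From the explicit formula \eqref{eq:Yclosed}, $X_t$ is a Gaussian random variable (affine in the Wiener integral) with mean $m(t)=g_0(t) + (\boldsymbol R^\kappa_T g_0)(t)$ and variance $v(t)=\tfrac{\nu^2}{\kappa^2}\int_0^t R^\kappa_T(t,s)^2\,ds$. The first condition in Definition~\ref{D:kernelvolterra} applied to $R^\kappa_T$ gives $\sup_{t\leq T} v(t) <\infty$, and $m\in L^2([0,T],\R)$ with $\sup_{t\leq T}|m(t)|<\infty$ is not immediate pointwise but $\sup_{t\leq T}\int_0^t |R^\kappa_T(t,s)|^2 ds<\infty$ together with Cauchy--Schwarz gives $|(\boldsymbol R^\kappa_T g_0)(t)| \leq (\sup_t\int_0^T|R^\kappa_T(t,s)|^2ds)^{1/2}\|g_0\|_{L^2}$, hence $\sup_{t\leq T}|m(t)|<\infty$. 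Since for a Gaussian variable all $L^p$ norms are controlled by mean and variance, $\sup_{t\leq T}\E[|X_t|^p] \leq C_p\big(\sup_t|m(t)|^p + \sup_t v(t)^{p/2}\big) < \infty$ for every $p\geq 1$, which is \eqref{eq:momentsX}. In particular $\E\int_0^T X_s^2\,ds = \int_0^T (m(s)^2 + v(s))\,ds < \infty$, so $X$ has sample paths in $L^2([0,T],\R)$ a.s., and the It\^o integral $\int_0^\cdot X_s\,dB_s$ is well defined.

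Finally I would recover $S$. Given the progressively measurable $X$ with $\int_0^T X_s^2\,ds<\infty$ a.s., the SDE \eqref{eq:steinsteinS}, $dS_t = S_t X_t\,dB_t$, $S_0>0$, is a linear SDE with random (but locally bounded in the integrated sense) coefficient, so its unique strong solution is the stochastic exponential \eqref{eq:Sclosed}, $S_t = S_0\exp\!\big(-\tfrac12\int_0^t X_s^2\,ds + \int_0^t X_s\,dB_s\big)$; this is checked by It\^o's formula. Progressive measurability of $(X,S)$ follows since $X$ is adapted (being $\Fc_t$-measurable as a Wiener integral over $[0,t]$ after using that $R^\kappa_T$ is Volterra) and jointly measurable, hence admits a progressively measurable modification \citep{ondrejat2013existence}, and $S$ is then progressively measurable as a continuous adapted process.

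The main obstacle is the bookkeeping in the stochastic Fubini step and, relatedly, making sure the resolvent manipulations are valid at the level of $L^2$-kernels rather than just formally: one must check that each of $K\star R^\kappa_T g_0$, $K\star\big(\int_0^\cdot R^\kappa_T\,dW\big)$ etc.\ is well defined in $L^2$, which is exactly where the two conditions of Definition~\ref{D:kernelvolterra} for $R^\kappa_T$ (guaranteed by Lemma~\ref{L:res}) are used. Everything else is routine, once the explicit solution is guessed from the resolvent.
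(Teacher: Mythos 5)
Your proposal is correct and takes essentially the same route as the paper: both solve the linear Volterra equation explicitly via the resolvent $R^{\kappa}_T$ of $\kappa K$ furnished by Lemma~\ref{L:res}, verify the closed-form candidate using the resolvent identity \eqref{eq:resolventeqkernel} together with stochastic Fubini, deduce uniqueness from the invertibility of $(\id-\kappa\boldsymbol{K})$, and obtain $S$ as the stochastic exponential checked by It\^o's formula. The only cosmetic difference is that you derive \eqref{eq:momentsX} from the Gaussianity of $X_t$ rather than from the Burkholder--Davis--Gundy inequality; in either version the deterministic part of that bound tacitly uses more than $g_0\in L^2$ (namely a pointwise bound on $g_0$ itself, not just on $\boldsymbol{R}^{\kappa}_T g_0$), a point the paper's own proof glosses over as well.
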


\begin{proof} If $\kappa = 0$, the existence is trivial. Fix $\kappa \neq 0$. 
	An application of Lemma~\ref{L:res} on the kernel $\kappa K$ yields the existence of a resolvent $R^\kappa_T$ satisfying \eqref{D:kernelvolterra}. We define $X$ as in \eqref{eq:Yclosed} and we write it in compact form:
\begin{align}
X&= (\id + \boldsymbol{R}^{\kappa}_T)(g_0)+  \frac 1 {\kappa} \boldsymbol{R}^{\kappa}_T (\nu dW),
\end{align}	
where we used the notation $\boldsymbol{R}^{\kappa}_T(\nu dW)(t)= \int_0^t {R}^{\kappa}_T (t,s)\nu dW_s:=N_t$.  {We first observe that $X$ admits a progressively measurable modification. Indeed, the stochastic integral $N$ is adapted as an Itô integral and it is 	mean-square continuous, i.e. $\E[|N_t -N_s|^2] \to 0$ as $s\to t$ by virtue of Itô's isometry and the fact that $R_T^{\kappa}$ satisfies \eqref{D:kernelvolterra} (see Lemma~\ref{L:res}). Therefore, $N$ admits a progressively measurable modification.} We now show that $X$ solves \eqref{eq:steinsteinX}. 
Using \eqref{eq:integralresop}, composing both sides by $(\id + \boldsymbol{R}^{\kappa}_T)^{-1} = (\id - \kappa \boldsymbol{K})$ and invoking stochastic Fubini's theorem yield 
\begin{align*}
(\id -\kappa\boldsymbol{K})(X) &= g_0  +  (\id -\kappa\boldsymbol{K}) \frac 1 {\kappa} \boldsymbol{R}^{\kappa}_T (\nu dW) \\
&= g_0 +  \boldsymbol{K}  (\nu dW),
\end{align*} 
where we used the resolvent equation \eqref{eq:resolventeqop0} for the last equality. 
This shows that 
$$ X_t = g_0(t) + \kappa (\boldsymbol{K})(X)(t) + \boldsymbol{K}(\nu dW)(t)= g_0(t) + \kappa\int_0^t K(t,s) X_s ds + \int_0^tK(t,s)\nu dW_s, $$ 
 yielding that $X$ is a  strong solution of \eqref{eq:steinsteinX}. 
 Furthermore, \eqref{eq:momentsX} follows from the fact that $\sup_{s\leq T} \int_0^T |R^\kappa_T(s,u)|^2 du<\infty$  combined with the Burkholder-Davis-Gundy inequality.
 One can therefore define $S$ as in \eqref{eq:Sclosed} and it is immediate that $S$ solves \eqref{eq:steinsteinS} by an application of It\^o's formula. The uniqueness statement follows by reiterating the same argument backwards: by showing that any solution $X$ to \eqref{eq:steinsteinX} is of the form \eqref{eq:Yclosed} using  the resolvent equation.
\end{proof}
}

We now justify in the three following lemmas that  the quantities  $(\id -b\boldsymbol{K})$ and $\left( \id - 2 a\boldsymbol{\tilde \Sigma}_{t}  \right)$ appearing in the definition of $t\mapsto\boldsymbol{\Psi}_t$ in \eqref{def:riccati_operator} are invertible  so that $\boldsymbol{\Psi}_t$ is well-defined for any kernel $K$ as in Definition~\ref{D:kernelvolterra}.

\begin{lemma}\label{L:staroperation}
	Let $K$ satisfy \eqref{assumption:K_stein} and $L \in L^2([0,T]^2,\R)$. Then, $ K\star L$ satisfies  \eqref{assumption:K_stein}. Furthemore,  if $L$ satisfies  \eqref{assumption:K_stein}, then, $(s,u)\mapsto (K \star L^*)(s,u)$ is continuous.
\end{lemma}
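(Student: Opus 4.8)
The plan is to verify the two conditions in \eqref{assumption:K_stein} for $K \star L$ directly, using only the Cauchy--Schwarz inequality and the hypotheses on $K$ and $L$, and then to treat the continuity claim for $K \star L^*$ separately since it requires the $L^2$-continuity of both kernels.

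First I would establish the uniform $L^2$-bound. For fixed $s \le T$ one writes
\begin{align*}
\int_0^T |(K\star L)(s,u)|^2 du = \int_0^T \left| \int_0^T K(s,z) L(z,u)\, dz \right|^2 du \le \left( \int_0^T |K(s,z)|^2 dz \right) \int_0^T \int_0^T |L(z,u)|^2 dz\, du,
\end{align*}
by Cauchy--Schwarz in the inner integral followed by Fubini. Taking the supremum over $s$ and using the first part of \eqref{assumption:K_stein} for $K$ together with $L \in L^2([0,T]^2,\R)$ gives the first condition for $K\star L$. Next, for the $L^2$-continuity, for $u \le T$ and $h$ small,
\begin{align*}
\int_0^T |(K\star L)(u+h,s) - (K\star L)(u,s)|^2 ds \le \left( \int_0^T |K(u+h,z) - K(u,z)|^2 dz \right) \int_0^T \int_0^T |L(z,s)|^2 dz\, ds,
\end{align*}
again by Cauchy--Schwarz and Fubini; the right-hand side tends to $0$ as $h \to 0$ by the second part of \eqref{assumption:K_stein} for $K$. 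This proves $K \star L$ satisfies \eqref{assumption:K_stein}.

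For the last assertion, I would show joint continuity of $(s,u)\mapsto (K\star L^*)(s,u) = \int_0^T K(s,z)L(u,z)\,dz$ by bounding the increment using the triangle inequality and splitting: $(K\star L^*)(s',u') - (K\star L^*)(s,u)$ is controlled by $\int_0^T |K(s',z)-K(s,z)||L(u',z)|dz + \int_0^T |K(s,z)||L(u',z)-L(u,z)|dz$, and each term is estimated by Cauchy--Schwarz. The first term is bounded by $\big(\int_0^T |K(s',z)-K(s,z)|^2 dz\big)^{1/2}\sup_{r\le T}\big(\int_0^T |L(r,z)|^2 dz\big)^{1/2}$, which is small when $|s'-s|$ is small by the $L^2$-continuity of $K$ and the uniform bound on $L$; symmetrically for the second term using the $L^2$-continuity of $L$ and the uniform bound on $K$. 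Hence the map is continuous.

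I do not anticipate a genuine obstacle here; the whole statement is a routine application of Cauchy--Schwarz and Fubini. The only point requiring mild care is that the continuity claim for $K \star L^*$ genuinely needs $L$ (not just $K$) to satisfy \eqref{assumption:K_stein}, since both arguments of the two kernels must be perturbed; the hypothesis is stated accordingly, so this is not an issue. One should also note implicitly that $K$ and $L$ satisfying \eqref{assumption:K_stein} forces them into $L^2([0,T]^2,\R)$ via the uniform bound, so all the $\star$-products are well-defined in the first place.
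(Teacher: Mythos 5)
Your proof is correct and takes exactly the route the paper intends: the paper's own proof simply says the first part follows from Cauchy--Schwarz and the continuity claim follows the argument of a cited lemma, and your write-up fills in precisely those standard Cauchy--Schwarz/Fubini estimates and the triangle-inequality splitting for the joint continuity. No gaps.
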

\begin{proof}
	An application of the Cauchy-Schwarz inequality yields the first part. The second part follows along the same lines as in the proof of \citet[Lemma 3.2]{AJ2019laplace}.
\end{proof}

\begin{lemma}\label{L:Kt}
		Fix $b\in \mathbb C$ and	 a kernel $K$ as in Definition~\ref{D:kernelvolterra}.  Then, $(\id -b\boldsymbol{K} )$ is invertible. Furthermore, for all $t\leq T$, $\tilde{ \boldsymbol{\Sigma}}_t$ given by \eqref{def:C_tilde} {is an integral operator of trace class with continuous kernel} and can be re-written in the form
		\begin{align}\label{eq:sigmaKt}
		\tilde{\boldsymbol{\Sigma}}_t =  (\id- b\boldsymbol{K}_t)^{-1} \boldsymbol{\Sigma}_t (\id-b \boldsymbol{K}_t^*)^{-1}
		\end{align}
		where $\boldsymbol{K}_t$ is the integral operator induced by the kernel $K_t(s,u)=K(s,u)\bm1_{u\geq t}$, for $s,u\leq T$. 
\end{lemma}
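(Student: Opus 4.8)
The statement to establish is Lemma~\ref{L:Kt}: for a kernel $K$ of continuous and bounded type in $L^2$ and any $b \in \mathbb{C}$, the operator $(\id - b\boldsymbol{K})$ is invertible, and the adjusted covariance operator $\boldsymbol{\tilde\Sigma}_t$ is trace class with continuous kernel and admits the localized representation \eqref{eq:sigmaKt}. My plan has three parts, treated in order of logical dependency.

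\textbf{Step 1: invertibility of $(\id - b\boldsymbol{K})$.} Since $K$ is a Volterra kernel of continuous and bounded type in $L^2$, so is $bK$, and by Lemma~\ref{L:res} the resolvent kernel $R_T^{b}$ of $bK$ exists and belongs again to $L^2([0,T]^2,\mathbb{C})$, satisfying the resolvent equation \eqref{eq:resolventeqkernel}. By the identity \eqref{eq:integralresop}, this immediately gives that $(\id - b\boldsymbol{K})$ is invertible with $(\id - b\boldsymbol{K})^{-1} = \id + \boldsymbol{R}_T^{b}$. This is essentially a direct citation of the results already assembled in Appendix~\ref{A:res}; the only minor point to note is that Lemma~\ref{L:res} is stated for real kernels, but the proof (Neumann series / Volterra structure on a compact interval) goes through verbatim for the complex multiple $bK$.

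\textbf{Step 2: the localized representation \eqref{eq:sigmaKt}.} Here the key observation is that the covariance kernel $\Sigma_t(s,u) = \nu^2\int_t^T K(s,z)K(u,z)\,dz$ only "sees" the values of $K(\cdot,z)$ for $z \geq t$; that is, $\boldsymbol{\Sigma}_t = \nu^2 \boldsymbol{K}_t \boldsymbol{K}_t^*$ where $\boldsymbol{K}_t$ is induced by $K_t(s,u) = K(s,u)\bm 1_{u\geq t}$. I would then want to show that in the definition \eqref{def:C_tilde}, $\boldsymbol{\tilde\Sigma}_t = (\id - b\boldsymbol{K})^{-1}\boldsymbol{\Sigma}_t(\id - b\boldsymbol{K}^*)^{-1}$, one may replace $\boldsymbol{K}$ by $\boldsymbol{K}_t$ on both sides. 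The mechanism is the resolvent/Volterra structure: writing $(\id - b\boldsymbol{K})^{-1} = \id + \boldsymbol{R}_T^b$ and noting that $\boldsymbol{\Sigma}_t$ maps into functions supported (as far as the relevant $z$-integration is concerned) on $[t,T]$, one checks that $(\id + \boldsymbol{R}_T^b)\boldsymbol{\Sigma}_t = (\id + \boldsymbol{R}_{T,t}^b)\boldsymbol{\Sigma}_t$ where $R_{T,t}^b$ is the resolvent of $bK_t$ — because a Volterra kernel composed against something anchored at $[t,T]$ only picks up the "piece" of the kernel above $t$. Concretely, I expect to verify the kernel identity $(\id - b\boldsymbol{K}_t)^{-1}\boldsymbol{\Sigma}_t = (\id - b\boldsymbol{K})^{-1}\boldsymbol{\Sigma}_t$ directly from the resolvent equation and the triangular (Volterra) support, and symmetrically on the right. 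This is the computational heart of the lemma.

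\textbf{Step 3: trace class and continuity of the kernel.} Once \eqref{eq:sigmaKt} is in hand, I invoke Lemma~\ref{L:staroperation}: $K_t$ satisfies \eqref{assumption:K_stein} (it inherits the uniform $L^2$-bound and $L^2$-continuity from $K$, since truncating the second argument at $t$ does not spoil either), hence $\Sigma_t = \nu^2 (K_t \star K_t^*)$ has a continuous kernel. Then $\boldsymbol{\Sigma}_t$ is a trace-class integral operator (product of two Hilbert–Schmidt operators, per the Remark following \eqref{eq:traceprod}), and pre- and post-composing with the bounded operators $(\id - b\boldsymbol{K}_t)^{-1}$ preserves trace class. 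For continuity of the kernel of $\boldsymbol{\tilde\Sigma}_t$, I use \eqref{eq:integralresop} to write $(\id - b\boldsymbol{K}_t)^{-1} = \id + \boldsymbol{R}_{T,t}^b$ with $R_{T,t}^b$ again of continuous and bounded type (Lemma~\ref{L:res} applied to $bK_t$), so that the kernel of $\boldsymbol{\tilde\Sigma}_t$ is obtained from $\Sigma_t$ by finitely many $\star$-compositions against kernels satisfying \eqref{assumption:K_stein}; Lemma~\ref{L:staroperation} then gives continuity.

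\textbf{Main obstacle.} The delicate point is Step 2 — justifying that one may localize $\boldsymbol{K} \rightsquigarrow \boldsymbol{K}_t$ inside the resolvent inverses. Morally this is "the resolvent of a Volterra kernel on $[0,T]$ restricted to acting on functions anchored above $t$ equals the resolvent of the truncated kernel," but making this precise requires care with the direction of the $\star$-products and with which argument of $K$ is being truncated (note $K_t(s,u) = K(s,u)\bm1_{u\ge t}$ truncates the \emph{source} variable, whereas the Volterra property $K(t,s)=0$ for $s\ge t$ concerns the relation between both). I would handle it by inspecting the Neumann series $(\id - b\boldsymbol{K})^{-1} = \sum_{k\ge 0} b^k \boldsymbol{K}^k$ (which converges since $\boldsymbol{K}$ is a Volterra operator on a compact interval, so $\boldsymbol{K}^k$ has a kernel with vanishing sup-norm at rate governed by $T^k/k!$-type bounds) and checking term-by-term that $\boldsymbol{K}^k \boldsymbol{\Sigma}_t = \boldsymbol{K}_t^k \boldsymbol{\Sigma}_t$ using the explicit $\star$-product structure, where one tracks that each composition only integrates $K$ over an interval on which the $\bm1_{\cdot \ge t}$ truncation is immaterial thanks to the nested Volterra supports feeding into $\Sigma_t$. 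The symmetric statement on the right follows by taking adjoints.
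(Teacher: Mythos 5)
Your proposal is correct and follows essentially the same route as the paper: invertibility via the resolvent of $bK$ (Lemma~\ref{L:res} and \eqref{eq:integralresop}), localization of the inverses using the fact that $\Sigma_t(z,u)=0$ whenever $z\wedge u\le t$, and trace class plus continuity of the kernel via the Hilbert--Schmidt factorization and Lemma~\ref{L:staroperation}. The only cosmetic difference is in your Step 2, where the paper truncates the resolvent kernel directly, setting $R^b_{t,T}(s,u)=R^b_T(s,u)\bm 1_{u\ge t}$ and checking from the resolvent equation that this is the resolvent of $bK_t$, rather than iterating term by term through the Neumann series; both arguments hinge on exactly the same support observation.
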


\begin{proof}
		Lemma~\ref{L:res} yields the existence of the resolvent $R^b_T$ of  $bK$  which is again a Volterra kernel of continuous and bounded type.  Whence,  \eqref{eq:integralresop} yields that $(\id -b\boldsymbol{K} )$ is invertible with an inverse given by $(\id + \boldsymbol{R}^b_T)$. 
		To prove \eqref{eq:sigmaKt}, we fix $t\leq T$ and we observe that since $\Sigma_t(s,u)=0$ whenever $s\wedge u \leq t$, we have
		$$ (R^b_T \star \Sigma_t)(s,u) = {\int_t^T R^b_T(s,z)\Sigma_t(z,u)dz =  (R^b_{t,T}\star \Sigma_t) (s,u),} $$
		where we defined the kernel $R^b_{t,T}(s,u)=R^b_{T}(s,u)\bm{1}_{u\geq t}$. Similarly, $\Sigma_t \star (R^b_T)^* = \Sigma_t \star (R^b_{t,T})^*$. Using the resolvent equation \eqref{eq:resolventeqkernel} of $R^b_T$, it readily follows that   $R^b_{t,T}$ is the resolvent of $bK_t$ so that $(\id -b \boldsymbol{K_t})^{-1}=(\id + \boldsymbol{R}^b_{t,T})$. Combining all of the above leads to 
		\begin{align}
		\tilde{\boldsymbol{\Sigma}}_t &=  (\id- b\boldsymbol{K})^{-1} \boldsymbol{\Sigma}_t (\id-b \boldsymbol{K}^*)^{-1}  \\
		&= (\id + \boldsymbol{R}^b_T) \boldsymbol{\Sigma}_t (\id+ \boldsymbol{R}^b_T)^{*}\\
		&= \boldsymbol{\Sigma}_t +   \boldsymbol{R}^b_T \boldsymbol{\Sigma}_t +   \boldsymbol{\Sigma}_t (\boldsymbol{R}^b_T)^* +  \boldsymbol{R}^b_T \boldsymbol{\Sigma}_t (\boldsymbol{R}^b_T)^*\\
		&=  \boldsymbol{\Sigma}_t +   \boldsymbol{R}^b_{t,T} \boldsymbol{\Sigma}_t +   \boldsymbol{\Sigma}_t (\boldsymbol{R}^b_{t,T})^* +  \boldsymbol{R}^b_{t,T} \boldsymbol{\Sigma}_t (\boldsymbol{R}^b_{t,T})^* \label{eq:decompsig}\\
		&=  (\id + \boldsymbol{R}^b_{t,T}) \boldsymbol{\Sigma}_t (\id+ \boldsymbol{R}^b_{t,T})^{*}\\
		&=  (\id- b\boldsymbol{K}_t)^{-1} \boldsymbol{\Sigma}_t (\id + b\boldsymbol{K}^*_t)^{-1},
		\end{align}
		which proves \eqref{eq:sigmaKt}. {Furthermore, it can be readily deduced from \eqref{eq:decompsig} that $\tilde{\boldsymbol{\Sigma}}_t$ is an integral operator of trace class with continuous kernel: the trace class property follows from  the fact that the product of two Hilbert-Schmidt operators is of trace class; the continuity of the kernel follows from the fact that both $K$ and $R^b_T$ satisfy \eqref{assumption:K_stein}, recall Lemma~\ref{L:res}.}
\end{proof}

\begin{lemma}\label{eq:Psiwelldefined}
	Fix $a,b \in \mathbb C$ such that  $\Re(a) \leq -\frac{\Im(b)^2}{2\nu^2}$.  Let $t\leq T$ and	  $K$ be a kernel as in Definition~\ref{D:kernelvolterra}.  Then, 
		  $(\id -2\boldsymbol{\tilde{\Sigma}}_ta)$ is invertible and $\boldsymbol{\Psi}_t$ given by \eqref{def:riccati_operator}  is well-defined. Furthermore, if $\Im(a)=\Im(b)=0$ then,  $\boldsymbol{\Psi}_t$ is a symmetric nonpositive operator  in the sense of Definition~\ref{D:nonnegative}.
\end{lemma}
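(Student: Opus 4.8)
The plan is to first reduce the invertibility of $(\id - 2\boldsymbol{\tilde\Sigma}_t a)$ to a spectral/positivity statement, and then derive the symmetry/nonpositivity as a corollary of the same computation. For the invertibility, since $\boldsymbol{\tilde\Sigma}_t$ is a trace-class (hence compact) integral operator with continuous kernel by Lemma \ref{L:Kt}, $(\id - 2\boldsymbol{\tilde\Sigma}_t a)$ fails to be invertible only if $1/(2a) \in \mathrm{sp}(\boldsymbol{\tilde\Sigma}_t)$, i.e. only if $\boldsymbol{\tilde\Sigma}_t$ has a nonzero eigenvalue $\mu$ with $2a\mu = 1$, so it suffices to show that no such $\mu$ exists under the hypothesis $\Re(a) \le -\Im(b)^2/(2\nu^2)$. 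The key identity is the factorization $\boldsymbol{\tilde\Sigma}_t = (\id - b\boldsymbol K_t)^{-1}\boldsymbol\Sigma_t(\id - b\boldsymbol K_t^*)^{-1}$ from \eqref{eq:sigmaKt}, together with $\boldsymbol\Sigma_t = \nu^2 \boldsymbol K_t \boldsymbol K_t^*$ (which follows directly from \eqref{eq:sigmakernel}: $\Sigma_t(s,u) = \nu^2\int_t^T K(s,z)K(u,z)\,dz = \nu^2(K_t \star K_t^*)(s,u)$). Writing $\boldsymbol M_t := \nu(\id - b\boldsymbol K_t)^{-1}\boldsymbol K_t$, we get $\boldsymbol{\tilde\Sigma}_t = \boldsymbol M_t \boldsymbol M_t^*$, so $\boldsymbol{\tilde\Sigma}_t$ is a nonnegative self-adjoint operator (for the real $L^2$ inner product extended sesquilinearly to the complexification), whence its eigenvalues $\mu$ are real and $\ge 0$.

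With $\mu \ge 0$ in hand, I would argue as follows. If $\mu = 0$ then $2a\mu = 0 \ne 1$. If $\mu > 0$, then $2a\mu = 1$ forces $a = 1/(2\mu) > 0$ to be a positive real; but the hypothesis $\Re(a) \le -\Im(b)^2/(2\nu^2) \le 0$ rules this out. Hence $(\id - 2a\boldsymbol{\tilde\Sigma}_t)$ is invertible. Similarly $(\id - b\boldsymbol K)$ is invertible by Lemma \ref{L:Kt}, so all three factors in \eqref{def:riccati_operator} are invertible and $\boldsymbol\Psi_t$ is well-defined. (A subtlety I should be careful about: the bilinear form $\langle f, \boldsymbol{\cdot}\,f\rangle_{L^2}$ is not an inner product on the complexification since it is bilinear rather than sesquilinear; the clean way is to note that $\boldsymbol{\tilde\Sigma}_t$ restricted to real $L^2$ is a genuine nonnegative self-adjoint compact operator, hence diagonalizable with real nonnegative eigenvalues by the spectral theorem, and these eigenvalues coincide with those of its complexification.)

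For the last assertion, suppose $\Im(a) = \Im(b) = 0$, so $a, b \in \R$ and the hypothesis becomes $a \le 0$. Then $\boldsymbol K_t$, $(\id - b\boldsymbol K_t)^{-1}$, and $\boldsymbol M_t$ all map real $L^2$ to real $L^2$, and on $L^2([0,T],\R)$ we have, using \eqref{def:riccati_operator} rewritten via \eqref{def:C_tilde} and $\boldsymbol{\tilde\Sigma}_t = \boldsymbol M_t\boldsymbol M_t^*$, that
\begin{align}
\boldsymbol\Psi_t = (\id - b\boldsymbol K^*)^{-1}\, a\,(\id - 2a\boldsymbol{\tilde\Sigma}_t)^{-1}(\id - b\boldsymbol K)^{-1}.
\end{align}
For any $f \in L^2([0,T],\R)$, set $h = (\id - b\boldsymbol K)^{-1}f$; then $\langle f, \boldsymbol\Psi_t f\rangle_{L^2} = a\,\langle h, (\id - 2a\boldsymbol{\tilde\Sigma}_t)^{-1}h\rangle_{L^2}$. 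Since $\boldsymbol{\tilde\Sigma}_t = \boldsymbol M_t\boldsymbol M_t^*$ is nonnegative self-adjoint with eigenvalues $\mu_n \ge 0$ and $a \le 0$, the operator $(\id - 2a\boldsymbol{\tilde\Sigma}_t)^{-1}$ is self-adjoint with eigenvalues $(1 - 2a\mu_n)^{-1} \in (0,1]$, hence nonnegative; therefore $\langle h, (\id - 2a\boldsymbol{\tilde\Sigma}_t)^{-1}h\rangle_{L^2} \ge 0$, and multiplying by $a \le 0$ gives $\langle f, \boldsymbol\Psi_t f\rangle_{L^2} \le 0$. It remains to check $\boldsymbol\Psi_t = \boldsymbol\Psi_t^*$: this follows since $(\id - b\boldsymbol K^*)^{-1} = ((\id - b\boldsymbol K)^{-1})^*$ and $(\id - 2a\boldsymbol{\tilde\Sigma}_t)^{-1}$ is self-adjoint (as $\boldsymbol{\tilde\Sigma}_t$ is), so taking adjoints of the displayed expression returns the same operator. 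Thus $\boldsymbol\Psi_t$ is symmetric nonpositive in the sense of Definition \ref{D:nonnegative}. The main obstacle is the bookkeeping around real versus complex inner products — making sure that "self-adjoint with nonnegative real spectrum" is legitimately invoked — but once $\boldsymbol{\tilde\Sigma}_t = \boldsymbol M_t\boldsymbol M_t^*$ is established the rest is routine spectral calculus.
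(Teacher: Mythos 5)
The real-coefficient half of your argument (the case $\Im(a)=\Im(b)=0$) is correct and essentially the paper's own argument. But the invertibility step for general complex $a,b$ has a genuine gap at its core: the claim that $\boldsymbol{\tilde\Sigma}_t=\boldsymbol M_t\boldsymbol M_t^*$ with $\boldsymbol M_t=\nu(\id-b\boldsymbol K_t)^{-1}\boldsymbol K_t$ is a nonnegative self-adjoint operator with real eigenvalues $\mu\ge 0$. In this paper the star denotes the \emph{transposed} kernel, $K^*(s,u)=K(u,s)$, not the Hermitian adjoint. When $\Im(b)\neq 0$ the operator $(\id-b\boldsymbol K_t^*)^{-1}$ is the transpose, not the conjugate-transpose, of $(\id-b\boldsymbol K_t)^{-1}$, so $\boldsymbol{\tilde\Sigma}_t$ is of the form $\boldsymbol M\boldsymbol M^\top$ with complex $\boldsymbol M$: it is complex-symmetric but not self-adjoint, and its spectrum need not be real or nonnegative. (A two-step discretization with $K=\left(\begin{smallmatrix}0&0&0\\1&0&0\\1&1&0\end{smallmatrix}\right)$ gives a nontrivial block of $\tilde\Sigma$ with trace $(b+1)^2+2$, whose real part is very negative for $b=i\beta$ with $\beta$ large, so its eigenvalues cannot all be nonnegative reals.) Your fallback, ``restrict to real $L^2$,'' does not apply either, since the kernel of $\boldsymbol{\tilde\Sigma}_t$ is genuinely complex-valued when $b\notin\R$. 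A telltale sign of the gap: your argument only ever invokes $\Re(a)\le 0$, whereas the lemma assumes the strictly stronger $\Re(a)\le-\Im(b)^2/(2\nu^2)$ — the extra term exists precisely to absorb the contribution of $\Im(b)$ that your reduction discards.

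The paper handles this by writing $(\id-2a\boldsymbol{\tilde\Sigma}_t)=(\id-b\boldsymbol K_t)^{-1}\boldsymbol A_t(\id-b\boldsymbol K_t^*)^{-1}$ with $\boldsymbol A_t=(\id-b\boldsymbol K_t)(\id-b\boldsymbol K_t^*)-2a\boldsymbol\Sigma_t$ and then taking the \emph{real part}: since $\Re(b^2)=\Re(b)^2-\Im(b)^2$ and $\boldsymbol\Sigma_t=\nu^2\boldsymbol K_t\boldsymbol K_t^*$, one finds $\Re(\boldsymbol A_t)=(\id-\Re(b)\boldsymbol K_t)(\id-\Re(b)\boldsymbol K_t)^*-\bigl(2\Re(a)+\Im(b)^2/\nu^2\bigr)\boldsymbol\Sigma_t$, a genuine self-adjoint operator that is bounded below by a positive constant exactly under the stated hypothesis; this forces $0\notin\mathrm{sp}(\boldsymbol A_t)$. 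To repair your proof you would need to replace the spectral claim about $\boldsymbol{\tilde\Sigma}_t$ by such a real-part (numerical-range) argument; the rest of your outline then goes through.
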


\begin{proof}
$\bullet$ 
 Using Lemma~\ref{L:Kt}, we write 
$$(\id -2a\boldsymbol{\tilde{\Sigma}}_t) = (\id -b\boldsymbol{K}_t )^{-1} \boldsymbol{A}_t (\id -b\boldsymbol{K}^*_t)^{-1}  $$
with 
\begin{align}
\boldsymbol{A}_t&=  \left( \id -b\boldsymbol{K}_t \right)  \left( \id -b\boldsymbol{K}^*_t \right)  -2a\boldsymbol{\Sigma}_t\\
&=\id -b\boldsymbol{K}_t  -b\boldsymbol{K}_t^* + b^2 \boldsymbol{K}_t\boldsymbol{K}_t^*  -2a\boldsymbol{\Sigma}_t.
\end{align}
It suffices to prove that $\boldsymbol{A}_t$ is invertible, that is $0 \notin \mbox{sp}(\boldsymbol{A}_t)$. Taking real parts and observing that $\Sigma_t = \nu^2 \boldsymbol{K}_t \boldsymbol{K_t}^*$ yields 
\begin{align}
\Re(\boldsymbol{A}_t)
&=\id -\Re(b)\boldsymbol{K}_t  -\Re(b)\boldsymbol{K}_t^* + \Re(b)^2 \boldsymbol{K}_t\boldsymbol{K}_t^*  -\Im(b)^2 \boldsymbol{K}_t\boldsymbol{K}_t^* -2\Re(a)\boldsymbol{\Sigma}_t \\
&=   \left( \id -\Re(b)\boldsymbol{K}_t \right)  \left( \id -\Re(b)\boldsymbol{K}_t \right)^*  - \left(2 \Re(a) + \frac{\Im(b)^2}{\nu^2}\right)\boldsymbol{\Sigma}_t\\
&= \bold{I}+\bold{II}
\end{align}
The operator $ \bold{I}$ is symmetric nonnegative  and invertible so that $\mbox{sp}(\bold{I})\subset (0,\infty)$. Furthermore, since $\left(2 \Re(a) + \frac{\Im(b)^2}{\nu^2}\right) \leq 0$ by assumption and $\bold{\Sigma}_t$ is symmetric nonnegative we have $\mbox{sp}(\bold{II})\in [0,\infty) $. It follows that  $\mbox{sp}(\Re(\boldsymbol{A}_t))\in (0,\infty) $, showing that $0 \notin \mbox{sp}(\boldsymbol{A}_t)$ and that $\boldsymbol{A}_t$ is invertible. Whence,  $(\id -2a\boldsymbol{\tilde{\Sigma}}_t)$ is invertible. Combined with Lemma~\ref{L:Kt}, we obtain that $\boldsymbol{\Psi}_t$ is well-defined.  \\  
$\bullet$ Assume that $\Im(a)=\Im(b)=0$. $\boldsymbol{\tilde{\Sigma}}_t$ defined as in \eqref{def:C_tilde} is clearly a  symmetric nonnegative operator with a continuous kernel on $[0,T]^2$, recall Lemma~\ref{L:staroperation},  an application of Mercer's theorem \citep[Theorem 1, p.208]{shorack2009empirical} yields the existence of an orthonormal basis $(e_{n})_{n\geq 1}$ of $L^2([0,T],{\R})$ and nonnegative eigenvalues $(\lambda_{n})_{n\geq 1}$ such that 
	$$ \boldsymbol{\tilde{\Sigma}}_t = \sum_{n\geq 1} \lambda_{n} \langle e_{n}, \boldsymbol{\cdot}\, \rangle_{L^2} e_{n}.$$
	Whence,
	$$ \id -2a\boldsymbol{\tilde{\Sigma}}_t = \sum_{n\geq 1} (1-2a\lambda_{n}) \langle e_{n}, \boldsymbol{\cdot}\, \rangle_{L^2} e_{n}.$$
	Since $a\leq 0$, $(1-2a\lambda_{n})\geq 1 >0$, for each $n\geq 1$, so that the inverse of $(\id -2a\boldsymbol{\tilde{\Sigma}}_t)$ is a symmetric nonnegative operator  given by 
	$$ \left(\id -2a\boldsymbol{\tilde{\Sigma}}_t\right)^{-1} = \sum_{n\geq 1} \frac{1}{1-2a\lambda_{n}}\langle e_{n}, \boldsymbol{\cdot}\, \rangle_{L^2} e_{n}.$$
	Finally, $\boldsymbol{\Psi}_t$ is clearly symmetric and for any  $f \in L^2([0,T],\R)$
	$$ \langle f , \boldsymbol{\Psi}_tf \rangle_{L^2}= a \langle \tilde f ,  \left(\id -2a\boldsymbol{\tilde{\Sigma}}_t\right)^{-1} \tilde f \rangle_{L^2} \leq 0,$$
	with $\tilde f= (\id -b\boldsymbol{K})^{-1} f$.  This shows that $\boldsymbol{\Psi}_t$ is nonpositive. 
\end{proof}

\section{Proof of Theorem~\ref{T:volterra}}\label{A:proofvolterra}
This section is dedicated to the proof of Theorem~\ref{T:volterra}.	We fix $T>0$,   a Volterra kernel $K$ as in Definition~\ref{D:kernelvolterra} satisfying \eqref{eq:assumptionkerneldiff1} and $u,w\in \mathbb C$, such that $0\leq \Re(u)\leq 1$ and $\Re(w)\leq0$. It follows that $a,b$ defined by \eqref{eq:ab} satisfy   
$$ \Re(a)+\frac{\Im(b)^2}{2\nu^2} = \Re(w) + \frac 1 2 (\Re(u)^2-\Re(u))  +\frac  1 2 (\rho^2-1)\Im(u)^2 \leq 0, $$
so that  an application of Lemma~\ref{eq:Psiwelldefined}
	yields that $\boldsymbol{\Psi}_t$ is well-defined.

We now collect from \citet[Lemma 5.8]{abi2020markowitz} further properties of    $t\mapsto\boldsymbol{\Psi}_t$. In particular, its link with an operator Riccati equation. We recall that $t\mapsto \boldsymbol{\Psi}_t$ is said to be strongly differentiable at time $t\geq  0$,  if there exists a bounded linear operator $\dot{\boldsymbol{\Psi}}_t$  from $ L^2\left([0,T],{\C}\right)$  into  itself  such that
	\begin{align}
	\lim_{h\to 0} \frac{1}{h} \| \boldsymbol{\Psi}_{t+h}-\boldsymbol{\Psi}_{t} -h \dot{\boldsymbol{\Psi}}_{t} \|_{\rm{op}}=0, \quad \text{where }  \|\boldsymbol{G}\|_{\rm {op}}= \sup_{f \in L^2([0,T],\C)} \frac{\|\bold G f\|_{L^2}}{\|f\|_{L^2}}.
	\end{align}

	\begin{lemma}\label{L:Psi}
		Fix a kernel $K$ as in Definition~\ref{D:kernelvolterra} satisfying \eqref{eq:assumptionkerneldiff1}. Then, for each $t\leq T$, $\boldsymbol{\Psi}_t$ given by \eqref{def:riccati_operator}  is a bounded  linear operator from $L^2\left([0,T],\R\right)$ into itself. Furthermore, 
		\begin{enumerate}
			\item \label{L:Psi1}
			$\bar{\boldsymbol{\Psi}}_t:=(-a \id + \boldsymbol{\Psi}_t)$ is an integral  operator induced by a symmetric kernel  $\bar \psi_t(s,u)$ such that 
			\begin{align}
			\label{eq:bound_psi_leb}
			\sup_{t\leq T} \int_{[0,T]^2}|\bar \psi_t(s,u)|^2 ds du<\infty.
			\end{align}
			\item \label{L:Psi2}
			For any $f \in L^2\left([0,T],\R\right)$,  
			\begin{align}
			\label{eq:Psi_on_boudary}
			(\boldsymbol{\Psi}_t f 1_t)(t) =& (a \id + b\boldsymbol{{K}}^*\boldsymbol{\Psi}_t )(f 1_t)(t),
			\end{align}
			where $1_t:s\mapsto \bold 1_{t\leq s}$.
			\item \label{L:Psi3}
			$t\mapsto \boldsymbol{\Psi}_t$ is strongly differentiable and  satisfies the operator Riccati equation
			\begin{align}
			\label{eq:riccati_psiBold}
			\dot{\boldsymbol{\Psi}}_t &= 2\boldsymbol{\Psi}_t \dot{\boldsymbol{{\Sigma}}}_t  \boldsymbol{\Psi}_t, \qquad t \in [0,T] \\
			{\boldsymbol{\Psi}_T}&=a\left(\id - b\boldsymbol{{K}}^*\right)^{-1} \left(\id - b\boldsymbol{{K}}\right)^{-1}
			\end{align}  
			where $\dot{\boldsymbol{{\Sigma}}}_t$ is the strong derivative of $t\mapsto\bold{\Sigma}_t$ induced by the  kernel
			\begin{align}\label{eq:diffkernelsigma}
			\dot{\Sigma}_t(s,u)=-\nu^2 K(s,t) K(u,t) , \quad a.e.
			\end{align}
		\end{enumerate}
	\end{lemma}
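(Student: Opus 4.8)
The plan is to establish the three assertions in order, relying on the structural decomposition of $\boldsymbol{\Psi}_t$ together with the regularity of $K$ and its resolvent. For part~\ref{L:Psi1}, I would first use Lemma~\ref{L:Kt} to rewrite $\boldsymbol{\tilde\Sigma}_t = (\id-b\boldsymbol{K}_t)^{-1}\boldsymbol{\Sigma}_t(\id-b\boldsymbol{K}_t^*)^{-1}$ and, via the resolvent $R^b_T$ of $bK$, expand it as in~\eqref{eq:decompsig} into a sum of products of integral operators whose kernels are built from $K$ and $R^b_T$; since both satisfy~\eqref{assumption:K_stein}, Lemma~\ref{L:staroperation} gives that $\boldsymbol{\tilde\Sigma}_t$ has a continuous kernel with an $L^2$-bound uniform in $t$ (the bound coming from the uniform $L^2$-estimates on $K$ and $R^b_T$ in Lemma~\ref{L:res}). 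Then I would write $\boldsymbol{\Psi}_t = (\id-b\boldsymbol{K}^*)^{-1} a(\id-2a\boldsymbol{\tilde\Sigma}_t)^{-1}(\id-b\boldsymbol{K})^{-1}$ and observe that $\bar{\boldsymbol{\Psi}}_t = -a\,\id + \boldsymbol{\Psi}_t$ equals $a\big[(\id-b\boldsymbol{K}^*)^{-1}(\id-2a\boldsymbol{\tilde\Sigma}_t)^{-1}(\id-b\boldsymbol{K})^{-1} - \id\big]$; using the resolvent identity $(\id-b\boldsymbol{K})^{-1}=\id+\boldsymbol{R}^b_T$ together with Neumann expansion of $(\id-2a\boldsymbol{\tilde\Sigma}_t)^{-1}=\id + 2a\boldsymbol{\tilde\Sigma}_t(\id-2a\boldsymbol{\tilde\Sigma}_t)^{-1}$, all the $\id$-terms cancel and $\bar{\boldsymbol{\Psi}}_t$ is seen to be an integral operator whose kernel is a finite sum of $\star$-products each containing at least one factor $R^b_T$, $K^*$ or $\boldsymbol{\tilde\Sigma}_t$; symmetry follows since $\boldsymbol{\Sigma}_t$ is symmetric and $(\id-b\boldsymbol{K})^{-1*}=(\id-b\boldsymbol{K}^*)^{-1}$, while~\eqref{eq:bound_psi_leb} follows from Cauchy--Schwarz applied to the $\star$-products and the uniform-in-$t$ bounds.

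For part~\ref{L:Psi2}, I would evaluate $(\boldsymbol{\Psi}_t f1_t)(t)$ using the continuous-kernel representation $\boldsymbol{\Psi}_t = a\,\id + \bar{\boldsymbol{\Psi}}_t$ from part~\ref{L:Psi1} (so point evaluation at $s=t$ makes sense) and the factorization $\boldsymbol{\Psi}_t = (\id - b\boldsymbol{K}^*)^{-1}a(\id-2a\boldsymbol{\tilde\Sigma}_t)^{-1}(\id-b\boldsymbol{K})^{-1}$. Writing $\boldsymbol{\Psi}_t = a(\id-b\boldsymbol{K})^{-1} + b\boldsymbol{K}^*\boldsymbol{\Psi}_t$, which is just the identity $(\id - b\boldsymbol{K}^*)^{-1} = \id + b\boldsymbol{K}^*(\id-b\boldsymbol{K}^*)^{-1}$ composed on the left, I would then use the Volterra property of $K$ --- namely $K(s,u)=0$ for $u\le s$, hence $K^*(t,u)=K(u,t)=0$ for $u\le t$ --- to see that $(b\boldsymbol{K}^*\boldsymbol{\Psi}_t(f1_t))(t)=b\int_t^T K(u,t)(\boldsymbol{\Psi}_t(f1_t))(u)\,du=(b\boldsymbol{K}^*\boldsymbol{\Psi}_t(f1_t))(t)$ involves only $u\ge t$, and that $(a(\id-b\boldsymbol{K})^{-1}(f1_t))(t)=af(t)$ because $(\id-b\boldsymbol{K})^{-1}=\id+\boldsymbol{R}^b_T$ with $R^b_T$ again Volterra (Lemma~\ref{L:res}), so the resolvent term contributes $\int_0^t R^b_T(t,s)f(s)1_{t\le s}\,ds = 0$. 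Assembling these gives exactly~\eqref{eq:Psi_on_boudary}.

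For part~\ref{L:Psi3}, I would first identify the strong derivative of $t\mapsto\boldsymbol{\Sigma}_t$: from $\Sigma_t(s,u)=\nu^2\int_t^T K(s,z)K(u,z)\,dz$ one reads off $\dot\Sigma_t(s,u)=-\nu^2 K(s,t)K(u,t)$, and the convergence in operator norm of the difference quotient follows from the $L^2$-continuity hypotheses~\eqref{assumption:K_stein} and~\eqref{eq:assumptionkerneldiff1} via Cauchy--Schwarz (this is where assumption~\eqref{eq:assumptionkerneldiff1}, bounding $\sup_t\int_0^T|K(s,t)|^2ds$, is needed, as it is not implied by Definition~\ref{D:kernelvolterra}). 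Since $\boldsymbol{K}$ does not depend on $t$, differentiating $\boldsymbol{\tilde\Sigma}_t = (\id-b\boldsymbol{K})^{-1}\boldsymbol{\Sigma}_t(\id-b\boldsymbol{K}^*)^{-1}$ gives $\dot{\boldsymbol{\tilde\Sigma}}_t = (\id-b\boldsymbol{K})^{-1}\dot{\boldsymbol{\Sigma}}_t(\id-b\boldsymbol{K}^*)^{-1}$, and then differentiating $(\id-2a\boldsymbol{\tilde\Sigma}_t)^{-1}$ via the standard rule $\tfrac{d}{dt}\boldsymbol{A}_t^{-1}=-\boldsymbol{A}_t^{-1}\dot{\boldsymbol{A}}_t\boldsymbol{A}_t^{-1}$ yields $\dot{\boldsymbol{\Psi}}_t = 2a\,(\id-b\boldsymbol{K}^*)^{-1}a(\id-2a\boldsymbol{\tilde\Sigma}_t)^{-1}\boldsymbol{\tilde\Sigma}_t\,{}\!\dot{}(\id-2a\boldsymbol{\tilde\Sigma}_t)^{-1}(\id-b\boldsymbol{K})^{-1}$; substituting $\dot{\boldsymbol{\tilde\Sigma}}_t$ and recognizing the factor $(\id-b\boldsymbol{K})^{-1}$ on either side of $\dot{\boldsymbol{\Sigma}}_t$ reassembles two copies of $\boldsymbol{\Psi}_t$ around $\dot{\boldsymbol{\Sigma}}_t$, giving $\dot{\boldsymbol{\Psi}}_t = 2\boldsymbol{\Psi}_t\dot{\boldsymbol{\Sigma}}_t\boldsymbol{\Psi}_t$; the terminal condition is immediate from~\eqref{def:riccati_operator} since $\boldsymbol{\Sigma}_T=0$ hence $\boldsymbol{\tilde\Sigma}_T=0$. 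The main obstacle I anticipate is the careful justification of strong differentiability in operator norm --- establishing that the remainder in the difference quotient for $\boldsymbol{\Sigma}_t$ (and hence for the composed expression) vanishes faster than $h$ --- which is exactly why the extra boundedness hypothesis~\eqref{eq:assumptionkerneldiff1} is imposed; this part follows the treatment of \citet[Lemma~5.8]{abi2020markowitz}, so I would invoke it directly for the quantitative estimates.
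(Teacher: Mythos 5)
The paper itself gives no argument here: its proof of Lemma~\ref{L:Psi} is a one-line deferral to \citet[Lemma 5.6]{abi2020markowitz}. Your reconstruction is therefore more explicit than the source, and its overall architecture --- Neumann/resolvent expansion for part~\ref{L:Psi1}, the left-factor identity plus the Volterra property for part~\ref{L:Psi2}, and $\frac{d}{dt}\boldsymbol{A}_t^{-1}=-\boldsymbol{A}_t^{-1}\dot{\boldsymbol{A}}_t\boldsymbol{A}_t^{-1}$ combined with $\dot{\boldsymbol{\tilde\Sigma}}_t=(\id-b\boldsymbol{K})^{-1}\dot{\boldsymbol{\Sigma}}_t(\id-b\boldsymbol{K}^*)^{-1}$ for part~\ref{L:Psi3} --- is the right one and does reassemble into $2\boldsymbol{\Psi}_t\dot{\boldsymbol{\Sigma}}_t\boldsymbol{\Psi}_t$ since $a$ is a scalar. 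Parts~\ref{L:Psi1} and~\ref{L:Psi3} are sound as sketched (the trace-class/kernel bounds and the uniform invertibility of $\id-2a\boldsymbol{\tilde\Sigma}_t$ in $t$ being supplied by Lemmas~\ref{L:Kt} and~\ref{eq:Psiwelldefined}).

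There is, however, a concrete error in your part~\ref{L:Psi2}. Composing $(\id-b\boldsymbol{K}^*)^{-1}=\id+b\boldsymbol{K}^*(\id-b\boldsymbol{K}^*)^{-1}$ on the left of \eqref{def:riccati_operator} gives
\begin{align}
\boldsymbol{\Psi}_t = a\left(\id-2a\boldsymbol{\tilde\Sigma}_t\right)^{-1}(\id-b\boldsymbol{K})^{-1} + b\boldsymbol{K}^*\boldsymbol{\Psi}_t,
\end{align}
not $\boldsymbol{\Psi}_t=a(\id-b\boldsymbol{K})^{-1}+b\boldsymbol{K}^*\boldsymbol{\Psi}_t$ as you wrote: you have dropped the middle factor $(\id-2a\boldsymbol{\tilde\Sigma}_t)^{-1}$, and without it the evaluation at $s=t$ does not reduce to $af(t)$. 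The repair is available from the structure you already invoke: by \eqref{eq:sigmakernel} and the Volterra property, $\Sigma_t(s,u)=0$ whenever $s\wedge u\leq t$, and the decomposition \eqref{eq:decompsig} together with the Volterra property of the resolvent $R^b_{t,T}$ shows that the (continuous) kernel of $\boldsymbol{\tilde\Sigma}_t$ satisfies $\tilde\Sigma_t(t,\cdot)\equiv 0$. Hence $(\boldsymbol{\tilde\Sigma}_t h)(t)=0$ for every $h$, so writing $(\id-2a\boldsymbol{\tilde\Sigma}_t)^{-1}=\id+2a\boldsymbol{\tilde\Sigma}_t(\id-2a\boldsymbol{\tilde\Sigma}_t)^{-1}$ shows that this factor acts as the identity at the evaluation point $s=t$, after which your computation $\left((\id-b\boldsymbol{K})^{-1}(f1_t)\right)(t)=f(t)$ (via the Volterra resolvent of Lemma~\ref{L:res}) goes through and yields \eqref{eq:Psi_on_boudary}. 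With that insertion the plan is complete.
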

	
	\begin{proof}
		The proof follows from a straighforward adaptation of the proof of \citet[Lemma 5.6]{abi2020markowitz}. 
	\end{proof}

Using the previous lemma and observing  that the adjusted conditional mean given in \eqref{eq:condmean} has the following dynamics 
\begin{align}\label{eq:processg_quadratic}
g_t(s)= \bm 1_{t\leq s} \left(g_0(s) + \int_0^t K(s,u)\kappa X_u du + \int_0^t K(s,u)\nu dW_u   \right)
\end{align}
we derive in the next lemma the dynamics of $t\mapsto \langle g_t, \boldsymbol{\Psi}_t g_t \rangle_{L^2}$.

	\begin{lemma}
		The dynamics of $t\mapsto \langle g_t, \boldsymbol{\Psi}_t g_t \rangle_{L^2}$ are given by
		\begin{align}
		d\langle g_t, \boldsymbol{\Psi}_t g_t \rangle_{L^2}&=  \Big(\langle g_t, \dot{\boldsymbol{\Psi}}_t g_t \rangle_{L^2} -aX_t^2 - 2 u\rho  \nu X_t \left( \boldsymbol{K}^* \boldsymbol{\Psi}_t\right)(g_t)(t)   - \Tr\big( \boldsymbol{\Psi}_t \dot{\boldsymbol{\Sigma}}_t\big) \Big) dt \\
		&\quad \quad   +  2\nu \left(\left(\boldsymbol{K}^* \boldsymbol{\Psi}_t\right)g_t\right)(t)  dW_t,  \quad dt\times \Q-a.e \label{eq:dynamicsL2inner}
		\end{align}
	\end{lemma}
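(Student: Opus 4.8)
The plan is to obtain \eqref{eq:dynamicsL2inner} by an It\^o-type expansion of the $L^2$-valued semimartingale $t\mapsto g_t$ against the family of bilinear forms induced by $\boldsymbol{\Psi}_t$, exploiting the properties collected in Lemma~\ref{L:Psi} (symmetry, strong differentiability, and the boundary relation \eqref{eq:Psi_on_boudary}). Writing $g_t=\bm 1_t Z_t$ with $\bm 1_t:s\mapsto\bm 1_{t\le s}$ and
\begin{equation*}
Z_t(s)=g_0(s)+\int_0^t K(s,u)\kappa X_u\,du+\int_0^t K(s,u)\nu\,dW_u,\qquad s\le T,
\end{equation*}
the Volterra property of $K$ together with \eqref{eq:steinsteinX} yields the key boundary identity $Z_t(t)=X_t$. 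Splitting the increment of $g$ over $[t,t+h]$ according to whether $s\ge t+h$ or $t\le s<t+h$ gives $g_{t+h}-g_t=\Delta^{\mathrm d}_h+\Delta^{\mathrm m}_h-\bm 1_{[t,t+h)}Z_t$, where $\Delta^{\mathrm d}_h=\bm 1_{[t+h,T]}\int_t^{t+h}K(\cdot,u)\kappa X_u\,du$ and $\Delta^{\mathrm m}_h=\bm 1_{[t+h,T]}\int_t^{t+h}K(\cdot,u)\nu\,dW_u$; by \eqref{assumption:K_stein}, \eqref{eq:assumptionkerneldiff1}, \eqref{eq:momentsX} and It\^o's isometry one has $\|\Delta^{\mathrm d}_h\|_{L^2}=O(h)$ while $\|\Delta^{\mathrm m}_h\|_{L^2}$ and $\|\bm 1_{[t,t+h)}Z_t\|_{L^2}$ are $O(\sqrt h)$, so only suitably ``diagonal'' contributions of the $O(\sqrt h)$ pieces survive at order $h$.

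I would then expand, using symmetry of $\boldsymbol{\Psi}_{t+h}$ and its strong differentiability,
\begin{equation*}
\langle g_{t+h},\boldsymbol{\Psi}_{t+h}g_{t+h}\rangle-\langle g_t,\boldsymbol{\Psi}_t g_t\rangle=\langle g_t,(\boldsymbol{\Psi}_{t+h}-\boldsymbol{\Psi}_t)g_t\rangle+2\langle g_{t+h}-g_t,\boldsymbol{\Psi}_t g_t\rangle+\langle g_{t+h}-g_t,\boldsymbol{\Psi}_t(g_{t+h}-g_t)\rangle+o(h),
\end{equation*}
the cross-terms with $(\boldsymbol{\Psi}_{t+h}-\boldsymbol{\Psi}_t)$ acting on $g_{t+h}-g_t$ being $o(h)$ by the size estimates. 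The first term gives $\langle g_t,\dot{\boldsymbol{\Psi}}_t g_t\rangle\,dt$. In the second term, the piece $2\langle\Delta^{\mathrm m}_h,\boldsymbol{\Psi}_t g_t\rangle$, after stochastic Fubini and the identity $\int_u^T K(s,u)(\boldsymbol{\Psi}_t g_t)(s)\,ds=((\boldsymbol K^*\boldsymbol{\Psi}_t)g_t)(u)$, produces the martingale term $2\nu((\boldsymbol K^*\boldsymbol{\Psi}_t)g_t)(t)\,dW_t$; $2\langle\Delta^{\mathrm d}_h,\boldsymbol{\Psi}_t g_t\rangle$ produces $2\kappa X_t((\boldsymbol K^*\boldsymbol{\Psi}_t)g_t)(t)\,dt$; and $-2\langle\bm 1_{[t,t+h)}Z_t,\boldsymbol{\Psi}_t g_t\rangle=-\tfrac 2 h\big(\int_t^{t+h}Z_t(s)(\boldsymbol{\Psi}_t g_t)(s)\,ds\big)h$ converges to $-2X_t(\boldsymbol{\Psi}_t g_t)(t)\,dt$, using $Z_t(t)=X_t$ and that $\boldsymbol{\Psi}_t g_t$ and $\boldsymbol K^*\boldsymbol{\Psi}_t g_t$ have versions continuous at $s=t$ (Lemma~\ref{L:staroperation}, Lemma~\ref{L:Psi}). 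In the third term only two contributions are of order $h$: writing $\boldsymbol{\Psi}_t=a\,\id+\bar{\boldsymbol{\Psi}}_t$ as in Lemma~\ref{L:Psi}, the $a\,\id$ part applied to $\bm 1_{[t,t+h)}Z_t$ gives $a\int_t^{t+h}Z_t(s)^2\,ds\to aX_t^2\,dt$ (the $\bar{\boldsymbol{\Psi}}_t$ part being $o(h)$ because both arguments are supported on $[t,t+h)$ and $\bar\psi_t\in L^2$, so that $\int\!\!\int_{[t,t+h)^2}|\bar\psi_t|=o(h)$), while $\langle\Delta^{\mathrm m}_h,\boldsymbol{\Psi}_t\Delta^{\mathrm m}_h\rangle$ is the genuine quadratic variation: taking conditional expectations and using $\mathbb E[dW_u\,dW_{u'}]=\delta_{uu'}du$ it converges to $\nu^2\langle K(\cdot,t)\bm 1_t,\boldsymbol{\Psi}_t K(\cdot,t)\bm 1_t\rangle\,dt$, which equals $-\Tr(\boldsymbol{\Psi}_t\dot{\boldsymbol{\Sigma}}_t)\,dt$ since $\dot\Sigma_t(s,u)=-\nu^2 K(s,t)K(u,t)$ makes $\dot{\boldsymbol{\Sigma}}_t$ a rank-one operator with $\Tr(\boldsymbol{\Psi}_t\dot{\boldsymbol{\Sigma}}_t)=-\nu^2\langle K(\cdot,t)\bm 1_t,\boldsymbol{\Psi}_t K(\cdot,t)\bm 1_t\rangle$; all remaining cross-terms in the third term are $o(h)$ by Cauchy--Schwarz and the support/integrability bounds.

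Collecting, one arrives at
\begin{equation*}
d\langle g_t,\boldsymbol{\Psi}_t g_t\rangle=\Big(\langle g_t,\dot{\boldsymbol{\Psi}}_t g_t\rangle+aX_t^2-2X_t(\boldsymbol{\Psi}_t g_t)(t)+2\kappa X_t((\boldsymbol K^*\boldsymbol{\Psi}_t)g_t)(t)-\Tr(\boldsymbol{\Psi}_t\dot{\boldsymbol{\Sigma}}_t)\Big)dt+2\nu((\boldsymbol K^*\boldsymbol{\Psi}_t)g_t)(t)\,dW_t,
\end{equation*}
and it remains only to eliminate $(\boldsymbol{\Psi}_t g_t)(t)$: applying the boundary relation \eqref{eq:Psi_on_boudary} (with $f\bm 1_t=g_t$, $g_t(t)=Z_t(t)=X_t$) gives $(\boldsymbol{\Psi}_t g_t)(t)=aX_t+b((\boldsymbol K^*\boldsymbol{\Psi}_t)g_t)(t)$ with $b=\kappa+\rho\nu u$, so that $aX_t^2-2X_t(\boldsymbol{\Psi}_t g_t)(t)+2\kappa X_t((\boldsymbol K^*\boldsymbol{\Psi}_t)g_t)(t)=-aX_t^2-2(b-\kappa)X_t((\boldsymbol K^*\boldsymbol{\Psi}_t)g_t)(t)=-aX_t^2-2u\rho\nu X_t((\boldsymbol K^*\boldsymbol{\Psi}_t)g_t)(t)$, which is exactly \eqref{eq:dynamicsL2inner}. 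The hard part will be the rigorous justification of the $L^2$-valued expansion, and above all of the two boundary identities -- that the $O(\sqrt h)$ term $\bm 1_{[t,t+h)}Z_t$ nevertheless contributes at order $h$ when paired against functions continuous at $s=t$, and that $\boldsymbol{\Psi}_t g_t$ and $\boldsymbol K^*\boldsymbol{\Psi}_t g_t$ do admit such continuous versions; this rests on the $L^2$-continuity built into Definition~\ref{D:kernelvolterra} and \eqref{eq:assumptionkerneldiff1}, is carried out in detail in \citet[proof of Lemma~5.8]{abi2020markowitz}, and is the reason the identity is only stated $dt\times\Q$-a.e.
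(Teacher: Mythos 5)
Your proposal is correct and follows essentially the same route as the paper: both are an It\^o-type expansion of the quadratic functional resting on the decomposition $\boldsymbol{\Psi}_t=a\,\id+\bar{\boldsymbol{\Psi}}_t$, the identity $\bar g_t(t)=X_t$, the identification of the trace term with the quadratic-variation contribution $\nu^2\langle K(\cdot,t),\boldsymbol{\Psi}_tK(\cdot,t)\rangle$, and the boundary relation \eqref{eq:Psi_on_boudary} to eliminate $(\boldsymbol{\Psi}_tg_t)(t)$. The only difference is bookkeeping — you expand the increment of the bilinear form directly and track orders in $h$, whereas the paper writes $\langle g_t,\boldsymbol{\Psi}_tg_t\rangle=\int_t^T\bigl(a\bar g_t(s)^2+\bar g_t(s)(\bar{\boldsymbol{\Psi}}_tg_t)(s)\bigr)ds$ and applies the Leibniz rule together with It\^o's formula on the integrand — and both versions defer the same regularity issues (continuity of $\boldsymbol{\Psi}_tg_t$ at $s=t$) to \citet[Lemma 5.8]{abi2020markowitz}, exactly as you note.
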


		\begin{proof}
We first set 
\begin{align}\label{eq:barg}
\bar g_t(s) = g_0(s) + \int_0^t K(s,u)\kappa X_u du + \int_0^t K(s,u)\nu dW_u,
\end{align}   
so that using Lemma~\ref{L:Psi}-\ref{L:Psi1}, we can write
		\begin{align}\label{eq:quadfun}
		\langle g_t, \boldsymbol{\Psi}_t g_t \rangle_{L^2} =&  \int_t^T \left(a \bar g_t(s)^2 ds +     \bar g_t(s)  (\bar{\boldsymbol{\Psi}}_t g_t)(s)  \right)ds. 
		\end{align}	
The Leibniz rule yields 
		\begin{align}
		d\langle g_t, \boldsymbol{\Psi}_t g_t \rangle_{L^2} &=  \left(-a \bar g_t(t)^2  -\bar g_t(t)  (\bar{\boldsymbol{\Psi}}_t g_t)(t) \right) dt \nonumber \\
		&\quad   + \int_t^T  d \left(a\bar g_t(s)^2  + \bar g_t(s)  (\bar{\boldsymbol{\Psi}}_t g_t)(s)\right) ds,  
		\quad dt\times \Q \, \, a.e. \label{eq:leibniz}
		\end{align}
		$\bullet$ We first compute   the dynamics of $t\mapsto a \bar g_t(s)^2 ds +     \bar g_t(s)  (\bar{\boldsymbol{\Psi}}_t g_t)(s) $. We fix $s\in [0,T]$. It follows from \eqref{eq:barg}, that 
\begin{align}
d\bar g_t(s) &= K(s,t)\kappa X_t dt + K(s,t)\nu dW_t, \quad dt\times \Q-a.e.
\end{align}
An application of Itô's lemma on the square yields 
\begin{align}
d\bar g_t(s)^2 &= \left(\nu^2 K(s,t)^2 +  2 \bar g_t(s) K(s,t)\kappa X_t \right) dt + 2 \bar g_t(s) K(s,t)\nu dW_t, \quad dt\times \Q-a.e.
\end{align}
Furthermore, we write 
$$ (\bar{\boldsymbol{\Psi}}_t g_t)(s) = \int_t^T \bar \psi_t (s,u) \bar g_t(u)du$$
so that an application of the Leibniz rule combined with 
the fact that $\bar g_t(t)=X_t$ for almost every $(t,\omega)$ 
and  Lemma~\ref{L:Psi}-\ref{L:Psi3} yields that  
$t\mapsto (\bar{\boldsymbol{\Psi}}_t g_t)(s)$ is a semimartingale on $[0,s)$ with the following dynamics 
\begin{align*}
d(\bar{\boldsymbol{\Psi}}_tg_t)(s) &= \left( - \bar \psi_t(s,t) X_t  + \int_t^T \dot{\bar{\psi}}_t(s,u)\bar g_t(u)du + \int_t^T \bar{\psi}_t(s,u) K(u,t) \kappa X_u du  \right) dt \\
&\quad   + \int_t^T \bar{\psi}_t(s,u) K(u,t) \nu du  dW_t \\ 
&=  \left(-X_t \bar \psi_t(s,t) + (\dot{\boldsymbol{\Psi}}_tg_t)(s) +X_t(\bar{\boldsymbol{\Psi}}_t K(\cdot,t)\kappa)(s)\right) dt  \\
&\quad  +   (\bar{\boldsymbol{\Psi}}_t K(\cdot,t)\nu )(s)dW_t , \quad dt\times \Q-a.e.   
\end{align*}
where we used that $\dot{\bar {\boldsymbol{\Psi}}}_t =  \dot{\boldsymbol{\Psi}}_t$ and that $K(u,t)=0$ for all $u\leq t$. 
Moreover, the quadratic covariation between $t\mapsto \bar g_t(s)$  and $t\mapsto (\bar{\boldsymbol{\Psi}}_t g_t)(s)$ is given by
\begin{align*}
d\left[\bar g_{\cdot}(s), (\bar{\boldsymbol{\Psi}}_{\cdot} g_{\cdot})(s) \right]_t&= \; 
\nu^2 \int_0^T  \bar \psi_{t}(s,u) K({u},t) K({s},t)  du dt \\
&= \;   -   \int_0^T  \bar \psi_{t}(s,u)  \dot{\Sigma}_t(u,s) du dt \\
&= \;   - \big(\bar{\boldsymbol{\Psi}}_t  \dot{\Sigma}_t (\cdot, s)\big)(s)dt. 
\end{align*}
Whence, combining the previous three identities, we get the dynamics of $U_t(s):=  a\bar g_t(s)^2  + \left(\bar g_t(s)  (\bar{\boldsymbol{\Psi}}_t g_t)(s)\right) $:
\begin{align}
dU_t(s) &=  \; a d\bar g_t(s)^2  + d\bar g_t(s)   (\bar{\boldsymbol{\Psi}}_t g_t)(s)+ \bar g_t(s)   d(\bar{\boldsymbol{\Psi}}_t g_t)(s) + d\left[ \bar g_{\cdot}(s), (\bar{\boldsymbol{\Psi}}_{\cdot} g_{\cdot})(s) \right]_t  \\
&=\;  a\nu^2 K(s,t)^2 dt +  2 a \bar g_t(s) K(s,t)\kappa X_t dt \\
&\quad +  \; X_t  \kappa   K(s,t)    (\bar{\boldsymbol{\Psi}}_t g_t)(s)  dt   +   \bar g_t(s)   (\dot{\boldsymbol{\Psi}}_tg_t)(s)dt \\
& \quad - \; \bar g_t(s)  \bar{\psi}_t(s,t)X_tdt + \bar g_t(s) X_t  (\bar{\boldsymbol{\Psi}}_t K(\cdot,t) \kappa )(s) dt   \\
&\quad -  \;  \big(\bar{\boldsymbol{\Psi}}_t \dot \Sigma_t (\cdot, s)\big)(s)dt  \\
&\quad  + \;  \left(2 a\bar g_t(s) K(s,t)\nu +  \nu  K(s,t)    (\bar{\boldsymbol{\Psi}}_t g_t)(s)  +   \bar g_t(s)   (\bar{\boldsymbol{\Psi}}_t K(\cdot,t)\nu)(s)\right)dW_t  \\
&= \;  \Big [ \bold{I}(s)+\bold{II}(s)+ \bold{III}(s)+ \bold{IV}(s)+\bold{V}(s)+ \bold{VI}(s) + \bold{VII}(s)  \Big] dt \\
& \quad + \left(\bold{VIII}(s) + \bold{IX}(s) +  \bold{X}(s)\right)dW_t , \quad dt\times \Q-a.e.   \label{eq:dynamicsofsum}
\end{align}
$\bullet$ We now integrate  in $s$ to obtain the right hand side in \eqref{eq:leibniz}. We let $\mathcal N = \{(t,\omega) :  \exists s\in [0,T] \mbox{ such that } \eqref{eq:dynamicsofsum} \mbox{ does not hold} \}$. Then, $\mathcal N$ is a null set and we fix $(t, \omega) \in [0,T]\times \Omega \backslash \mathcal N$. In the sequel, all the equalities are written for this particular $\omega$.  First, using that $\dot{\Sigma}_t(s,s) = - \nu^2K(s,t)^2$  and recalling that 
\begin{align}\label{eq:linkbar}
\boldsymbol{\Psi} = a \id + \bar{\boldsymbol{\Psi}}
\end{align} 
we obtain that\footnote{The operator $\boldsymbol{\Psi}_t \dot{\boldsymbol{\Sigma}}_t = a \dot{\boldsymbol{\Sigma}}_t + \bar{ \boldsymbol{\Psi}}_t \dot{\boldsymbol{\Sigma}}_t $ is of trace class: (i)  $\dot{\boldsymbol{\Sigma}}_t$ is of trace class since it can be written as product of two Hilbert-Schmidt integral operators $\dot{\boldsymbol{\Sigma}}_t = \tilde{\boldsymbol{K}_t} \tilde{\boldsymbol{K}_t}^* $ with $\tilde{K}_t(s,z) = K(s,t)/\sqrt{T}$, so that  \eqref{eq:traceprod} yields $\Tr(\dot{\boldsymbol{\Sigma}}_t)=\int_0^T \dot{{\Sigma}}_t(s,s)ds$; (ii) $ \bar{ \boldsymbol{\Psi}}_t \dot{\boldsymbol{\Sigma}}_t$ is of  trace class as product of two Hilbert-Schmidt integral operators so that  \eqref{eq:traceprod} yields $\Tr( \bar{ \boldsymbol{\Psi}}_t  \dot{\boldsymbol{\Sigma}}_t)=\int_0^T \int_0^T \bar \psi_t(s,z) \dot{{\Sigma}}_t(z,s)dzds$.} 
$$\int_t^T\left(\bold{I}(s) + 	\bold{VII}(s)\right) ds\;  =  \; - \mbox{Tr}\Big( \boldsymbol{\Psi}_t \dot{\boldsymbol{\Sigma}}_t \Big).$$
Combining \eqref{eq:linkbar} with  Lemma~\ref{L:Psi}--\ref{L:Psi2} and the fact that  $\bar{\boldsymbol{\Psi}}^*=\bar{\boldsymbol{\Psi}}$   we obtain that  
\begin{align}
\int_t^T\big[  \bold{II}(s) +\bold{III}(s)  +\bold{VI}(s) \big] ds &= 2 \kappa X_t  \int_0^T  K(s,t) (\boldsymbol{\Psi}_t g_t)(s)  ds =  2\kappa X_t (\boldsymbol{K}^*\boldsymbol{\Psi}_t g_t) (t).
\end{align}
On the other hand, we have 
\begin{align}
\int_0^T \bold{IV}(s)ds &= \;  \langle g_t, \dot{\boldsymbol{\Psi}}_t g_t\rangle_{L^2}, \quad  \int_0^T  \bold{V}(s) ds = -X_t (\bar{\boldsymbol{\Psi}}_t g_t)(t), \\
\int_0^T \big[ \bold{VIII}(s)+\bold{IX}(s) +\bold{X}(s) \big]ds &= \; 2\nu \left(\boldsymbol{K}^* \boldsymbol{\Psi}_t\right)(g_t)(t)   dW_t.
\end{align}
Therefore, summing the above, plugging in \eqref{eq:leibniz}, using Lemma~\ref{L:Psi}-\ref{L:Psi2} and recalling \eqref{eq:linkbar} and  that $b=\kappa +u\rho \nu  $ and $\bar g_t(t)= X_t$ yield 
\begin{align}
d\langle g_t, \boldsymbol{\Psi}_t g_t \rangle_{L^2} &=  \left(-a X_t^2 +   2\kappa X_t (\boldsymbol{K}^*\boldsymbol{\Psi}_t g_t) (t) - 2X_t(\bar{\boldsymbol{\Psi}}_t g_t)(t)  \right) dt \\
&\quad + \left(  \langle g_t, \dot{\boldsymbol{\Psi}}_t g_t\rangle_{L^2} - \mbox{Tr}\Big( \boldsymbol{\Psi}_t \dot{\boldsymbol{\Sigma}}_t \Big) \right) dt  
+ 2\nu \left(\boldsymbol{K}^* \boldsymbol{\Psi}_t\right)(g_t)(t)   dW_t  \\
&= \left(-a X_t^2   - 2 u\rho  \nu X_t \left( \boldsymbol{K}^* \boldsymbol{\Psi}_t\right)(g_t)(t)   - \mbox{Tr}\Big( \boldsymbol{\Psi}_t \dot{\boldsymbol{\Sigma}}_t \Big) +  \langle g_t, \dot{\boldsymbol{\Psi}}_t g_t\rangle_{L^2}\right) dt  \\
&\quad  
+ 2\nu \left(\boldsymbol{K}^* \boldsymbol{\Psi}_t\right)(g_t)(t)   dW_t  
\end{align}
leading to  the claimed dynamics \eqref{eq:dynamicsL2inner}. 

	\end{proof}

We can now complete the proof of Theorem~\ref{T:volterra}. We recall that $\phi$ given in \eqref{eq:phii} solves
\begin{align}\label{eq:riccati_phi}
\dot {\phi}_t  =  \Tr(\boldsymbol{\Psi}_t{\boldsymbol{\dot{{\Sigma}}}_t}).
\end{align}

	\begin{proof}[Proof of Theorem~\ref{T:volterra}]
		{It suffices to prove  that \eqref{eq:charvolterra} holds for all $0\leq u\leq 1$ and $w\leq 0$ to obtain the claimed expression by analytic continuation. Indeed,  the left hand side in \eqref{eq:charvolterra} is analytic in $(u,w)$ in an open region  $(\Re(u), \Re(w)) \in (u_-,u_+)\times(w_-,w_+)$  by general results on the analycity of characteristic functions, see \citet[Theorem II.5a]{widder2015laplace}. The right hand side is also analytic in $(u,w)$ 	since  resolvents are analytic: they are given by power series. Therefore, if  \eqref{eq:charvolterra} holds  for all $0\leq u\leq 1$ and $w\leq 0$, then by analytic continuation \eqref{eq:charvolterra} remains valid on $\{ (u,w)\in \mathbb C^2: 0\leq \Re(u) \leq 1 \mbox{ and } \Re(v)\leq0\}$.} \\
	  Fix $u \in [0,1]$, $w\in \R_-$. Set 
	\begin{align}\label{eq:Uproof}
	U_t = u\log S_t + w \int_0^t X_s^2 ds + \phi_{t}  + \langle g_t, \boldsymbol{\Psi}_t g_t \rangle_{L^2},
	\end{align}
		and   $M_t = \exp(U_t)$. It suffices to prove that $M$ is a martingale. Indeed, if this is the case, then observing that the terminal value of $M$ is 
		$$M_T=u\log S_T + w\int_0^T X_s^2 ds$$ and writing the martingale property $\E[M_T|\mathcal F_t]=M_t$, for $t\leq T$,  yields \eqref{eq:charvolterra}.\\ 
		\noindent \textit{Step 1. We prove that $M$  is a local martingale by expliciting its dynamics.} We first observe that
			\begin{align}
		\label{eq:Gamma_ito_exp}
		dM_t =& M_t \big(d U_t + \frac 1 2 d\langle U \rangle_t \big).
		\end{align}
		Using \eqref{eq:steinsteinS}, we have 
		\begin{align}
		d\log S_t = -\frac 1 2 X_t^2 dt + \rho X_t dW_t + \sqrt{1-\rho^2} X_t dW_t^{\perp}.
		\end{align}
Combined with the dynamics \eqref{eq:dynamicsL2inner} and the fact that $a=w + \frac 1 2 (u^2-u)$, we get that 
\begin{align}
dU_t &=  \Big(\langle g_t, \dot{\boldsymbol{\Psi}}_t g_t \rangle_{L^2} -\frac {u^2} 2 X_t^2 - 2 u\rho  \nu X_t \left( \boldsymbol{K}^* \boldsymbol{\Psi}_t\right)(g_t)(t)  + \dot \phi_t  - \Tr\big( \boldsymbol{\Psi}_t \dot{\boldsymbol{\Sigma}}_t\big) \Big) dt \\
&\quad \quad   +  \left(\rho u X_t + 2\nu \left(\boldsymbol{K}^* \boldsymbol{\Psi}_t\right)(g_t)(t)  \right) dW_t + u \sqrt{1-\rho^2} X_t dW_t^{\perp},
\end{align}
so that 
\begin{align}
d\langle U \rangle_t =  \left(u^2 X_t^2  + 4 \rho u \nu X_t  \left(\boldsymbol{K}^* \boldsymbol{\Psi}_t\right)(g_t)(t)  + 4\nu^2  \left( \left(\boldsymbol{K}^* \boldsymbol{\Psi}_t\right)(g_t)(t)\right)^2  \right)  dt.
\end{align}
Observing that 
$$ 4\nu^2  \left( \left(\boldsymbol{K}^* \boldsymbol{\Psi}_t\right)(g_t)(t)\right)^2= -4\langle g_t , \boldsymbol{\Psi}_t \boldsymbol{\dot{\Sigma}}_t\boldsymbol{\Psi}_t g_t  \rangle_{L^2}, $$
we get that the drift part in \eqref{eq:Gamma_ito_exp} is given by 
$$ M_t \left(  \langle g_t, \left(\dot{\boldsymbol{\Psi}}_t -2  \boldsymbol{\Psi}_t \boldsymbol{\dot{\Sigma}}_t\boldsymbol{\Psi}_t\right) g_t \rangle_{L^2}   + \dot \phi_t  - \Tr\big( \boldsymbol{\Psi}_t \dot{\boldsymbol{\Sigma}}_t\big)\right) =0, $$
by virtue of the Riccati equations \eqref{eq:riccati_psiBold} and \eqref{eq:riccati_phi}.
This shows that $M$ is a local martingale. \\
 \textit{Step 2. It remains to argue  that the local martingale $M$ is a true martingale.}  To this end, we fix $t\leq T$. An application of the second part of Lemma~\ref{eq:Psiwelldefined} yields that $\boldsymbol{\Psi}_t$ is a symmetric nonpositive operator so that, recall \eqref{eq:riccati_phi},   
 $$ \langle g_t, \boldsymbol{\Psi}_t g_t \rangle_{L^2}\leq 0 \quad  \mbox{ and } \quad \phi_t = -\int_t^T \Tr(\boldsymbol{\Psi}_s \dot{\boldsymbol{\Sigma}}_s)ds \leq 0.$$
 Whence, since $w\leq 0$ and $0\leq u \leq 1$, it follows from \eqref{eq:Uproof} that 
 \begin{align}
  U_t &\leq  u\log S_t\\
  &= u \log S_0 - \frac u 2 \int_0^t X^2_s ds +  u \int_0^t X_s dB_s\\
  &\leq  u\log S_0 - \frac {u^2} 2 \int_0^t X^2_s ds +  u \int_0^t X_s dB_s
 \end{align}
Therefore, 
\begin{align*}
|M_t|=\exp(U_t) \leq \exp(u\log S_t) \leq N_t
\end{align*}
with $N_t =   S_0^u \exp\left(- \frac {u^2} 2 \int_0^t X^2_s ds +  u \int_0^t X_s dB_s\right)$ which can be shown to be a  true martingale by a similar argument to that used in \citet[Lemma 7.3]{AJLP17}. Finally, we have showed that the local martingale $M$ is bounded  by a martingale, which gives that $M$ is also a true martingale. The proof is complete. 
\end{proof}

\small
\bibliographystyle{apa}
\addcontentsline{toc}{section}{References}
\bibliography{bibl}


\end{document}